\documentclass{amsart}
\usepackage{amsmath, amscd, amssymb, amsthm}
\usepackage{bbm}
\usepackage{latexsym}
\usepackage{amsfonts}
\usepackage{graphicx}
\usepackage[all,cmtip]{xy}
\usepackage[colorlinks,linkcolor=blue,breaklinks=true,urlcolor=blue,citecolor=blue,anchorcolor=blue,pagebackref]{hyperref}%
\usepackage{geometry}
\newtheorem{theorem}{Theorem}
\newtheorem{lemma}{Lemma}

\newtheorem{proposition}{Proposition}

\newtheorem{conjecture}{Conjecture}

\newcommand{\R}{{\mathbb R}}

\newcommand{\C}{{\mathbb C}}

\renewcommand*\backref[1]{}
\renewcommand*\backrefalt[4]{ \ifcase #1 \or (cited on page #2) \else (cited on pages #2) \fi}

\newcommand{\be}{\begin{equation}}
\newcommand{\ee}{\end{equation}}
\newcommand{\bea}{\begin{eqnarray}}
\newcommand{\eea}{\end{eqnarray}}

\newcommand{\E}{\mathrm{e}}
\newcommand{\I}{\mathrm{i}}
\newcommand{\tr}{\mathrm{tr}}
\newcommand{\im}{\mathrm{Im}}
\newcommand{\re}{\mathrm{Re}}

\newcommand{\vphi}{\varphi}

\newcommand{\ka}{\kappa}

\def\XXint#1#2#3{{\setbox0=\hbox{$#1{#2#3}{\int}$ }
\vcenter{\hbox{$#2#3$ }}\kern-.6\wd0}}

\begin{document}

\title[Hermitian Lie algebras with abelian
ideals of codimension two]
{Curvature property on Hermitian Lie algebras with abelian
ideals of codimension two}

\author{Shuwen Chen}
\address{Shuwen Chen. School of Mathematical Sciences, Chongqing Normal University, Chongqing 401331, China}
\email{{3153017458@qq.com}}\thanks{Zheng is the corresponding author. He is partially supported by National Natural Science Foundations of China
with the grant No.12471039 and  12141101, and is supported by the 111 Project D21024.}

\author{Fangyang Zheng}
\address{Fangyang Zheng. School of Mathematical Sciences, Chongqing Normal University, Chongqing 401331, China}
\email{20190045@cqnu.edu.cn; franciszheng@yahoo.com} \thanks{}

\subjclass[2020]{53C55 (primary), 53C05 (secondary)}

\keywords{Hermitian Lie algebra; canonical metric connection; holomorphic sectional curvature; abelian ideal; Chen--Nie conjecture}

\begin{abstract}
Let $(\mathfrak g,J,g)$ be a unimodular Hermitian Lie algebra  containing an abelian ideal
$\mathfrak a$ of real codimension two.  We study the curvature behaviour of $g$ and show that, if $g$ has constant Chern holomorphic sectional curvature, then it must be Chern flat. We also show that, for every canonical metric connection $D_s^r$ of $g$ other than the Chern connection, if $D_s^r$ has constant holomorphic sectional curvature, then $g$ is K\"ahler flat, and in this case $\mathfrak g/\mathfrak a$ is abelian.
\end{abstract}
\maketitle

\tableofcontents

\section{Introduction}

An old conjecture in complex geometry states that any compact Hermitian manifold with constant Chern holomorphic sectional curvature must be either K\"ahler or Chern flat.  At present the conjecture is known to be true in complex dimension 2 but is open in higher dimensions in general, except in several special circumstances. The conjecture is also open for general {\em Lie-Hermitian manifolds}. Recall that a Lie-Hermitian manifold means a compact Hermitian manifold in the form  $G/L$, where $G$ is a unimodular Lie group equipped with a left-invariant complex structure and a compatible left-invariant metric, and $L \subset G$ is a discrete subgroup. For Lie-Hermitian manifolds, the conjecture is still largely open, except in some special cases. For instance, the conjecture has been confirmed when $G$ is nilpotent \cite{LiZ}, or when $G$ is solvable and its commutator is $J$-invariant \cite{HuangZ}, or when $G$ is almost abelian (meaning that it contains a normal abelian subgroup of codimension one), by the work \cite{LiZhengHSC}.

Denote by ${\mathfrak g}$ the Lie algebra of $G$. It is well-known that left-invariant metrics on $G$ are in one one correspondence with inner products on ${\mathfrak g}$, and left-invariant complex structures on $G$ are in one one correspondence with {\em complex structures} on ${\mathfrak g}$, namely, an almost complex structure $J$  on the vector space ${\mathfrak g}$ satisfying the integrability condition:
\[ [x,y] - [Jx,Jy] + J[Jx,y] + J[x,Jy]=0, \ \ \ \ \ \ \ \ \forall \ x,y\in {\mathfrak g}.
\]
Given a Lie algebra ${\mathfrak g}$, if $J$ is a complex structure on ${\mathfrak g}$ and $g=\langle , \rangle$ is an inner product on ${\mathfrak g}$ compatible with $J$ (namely, $\langle Jx, Jy\rangle = \langle x, y\rangle$ for any $x,y\in {\mathfrak g}$), then we will call $(J,g)$ a {\em Hermitian structure} on ${\mathfrak g}$, and call $({\mathfrak g},J,g)$ a {\em Hermitian Lie algebra}. Note that when $G$ has compact quotients, ${\mathfrak g}$ is necessarily unimodular, meaning that $\mbox{tr}(ad_x)=0$ for any $x\in {\mathfrak g}$. Since we are primarily concerned with compact Hermitian manifolds, we will restrict our attention to Hermitian Lie algebras that are unimodular.

Recall that a Lie algebra ${\mathfrak g}$ is said to be {\em almost abelian,} if it contains an abelian ideal of codimension 1. In recent years, the Hermitian geometry of almost abelian Lie algebras has been extensively studied, and many results were obtained. We refer the readers to \cite{FinoP, FinoP2} and references therein for more discussions. As a natural generalization to the almost abelian case, in this paper we will consider Hermitian Lie algebras which contain abelian ideals of (real) codimension 2.

\vspace{0.2cm}

Coming back to the constant holomorphic sectional curvature conjecture mentioned at the beginning, there are also analogous conjectures/questions when the Chern connection is replaced by other canonical metric connection. Let $(M^n,g)$ be a compact Hermitian manifold. Denote respectively by $\nabla$, $\nabla^c$, $\nabla^b$ the Levi-Civita, Chern, and Bismut connection of $g$. For any $r\in {\mathbb R}$, denote by
$$ D^r =\frac12(1+r)\nabla^c+\frac12(1-r)\nabla^b $$
the $r$-th Gauduchon connection of $g$, which is the line joining the Chern and Bismut connection. In particular, $D^1=\nabla^c$ and $D^{-1}=\nabla^b$. This one parameter family of connections will be referred to as the {\em Gauduchon line} from now on. More generally, one can consider the two parameter family of canonical
metric connections of $g$:
\begin{equation}\label{eq:canonical-family}
D_s^r=(1-s)D^r+s\nabla,
\qquad (r,s)\in\Omega,
\end{equation}
where $\Omega=\{(r,s)\in\R^2:s\neq1\}\cup\{(0,1)\}$. Note that when $g$ is K\"ahler, all these connections $D^r_s$ coincide, while when $g$ is not K\"ahler, they are all distinct, namely, for any two distinct points $(r,s)$ and $(r',s')$ in $\Omega$, one has $D^r_s\neq D^{r'}_{s'}$.  Following \cite{ChenZhengCanonical}, put
$t=\frac12(1-r+rs)$ and $\chi=t^2+\frac{s^2}{4}$.  The Chen--Nie
curve is $\Gamma=\{(r,s)\in\Omega:\chi=1\}$. In \cite{ChenNie}, Chen and Nie proposed the following:
\begin{conjecture}[Chen--Nie \cite{ChenNie}]
Let $(M^n,g)$ be a compact Hermitian manifold and $(r,s)\in \Omega$. Suppose that the holomorphic sectional curvature of $D^r_s$ is equal to a constant $\kappa$. If $\kappa\neq 0$ then $g$ must be K\"ahler. If $\kappa=0$ and $(r,s)\notin \Gamma $ then $g$ must be $D^r_s$-flat.
\end{conjecture}

Note that since the $\nabla$ and $\nabla^c$ are not on the Chen--Nie curve $\Gamma$,  the above statement covers both the Chern and Levi-Civita cases. On the other hand, for $\nabla^b$ and other connections on $\Gamma$, the zero constant holomorphic sectional curvature does not imply K\"ahlerness or flatness. For Hermitian Lie algebras containing abelian ideals of codimension 2, we prove the following:
\begin{theorem}\label{thm:main}
Let $(\mathfrak g,J,g)$ be a unimodular Hermitian Lie algebra containing an abelian ideal
$\mathfrak a$ of real codimension two.  Suppose that the  holomorphic sectional curvature of $D_s^r$ is constant for some $(r,s)\in\Omega$. If $(r,s)\neq (1,0)$, then $g$ is K\"ahler flat and $\mathfrak g/\mathfrak a$ is abelian. If $(r,s)=(1,0)$, in this case $D^1_0=\nabla^c$, then $g$ is Chern flat.
\end{theorem}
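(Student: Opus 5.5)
We reduce to an explicit structure-constant computation in a unitary frame adapted to $\mathfrak a$, and then extract the constraints imposed by constant holomorphic sectional curvature.

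\textbf{Step 1: normal form.} Let $\mathfrak a$ be the abelian ideal of real codimension two. We distinguish two cases.

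If $J\mathfrak a=\mathfrak a$, then $\mathfrak a^\perp$ is $J$-invariant and spanned by $x, Jx$. We pick a unitary frame $e_1,\dots,e_n$ with $e_1=\frac{1}{\sqrt2}(x-\sqrt{-1}Jx)$ and $e_2,\dots,e_n$ spanning $\mathfrak a^{1,0}$.

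If $J\mathfrak a\neq\mathfrak a$, then $\mathfrak a\cap J\mathfrak a$ has codimension four. We pick $e_1,e_2$ spanning the orthogonal complement of $\mathfrak a\cap J\mathfrak a$, arranged so that $\mathfrak a$ contains the real parts of $e_2$ and the imaginary part direction of $e_1$ appropriately.

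In either case the bracket is determined by two commuting-up-to-$\mathfrak a$ derivations $A=ad_x|_{\mathfrak a}$ and $B=ad_y|_{\mathfrak a}$, together with $[x,y]=\lambda x+\mu y+v$, $v\in\mathfrak a$, where $\lambda x+\mu y$ is nonzero only if $\mathfrak g/\mathfrak a$ is nonabelian. We have $\operatorname{tr} A=\operatorname{tr} B=0$ modulo the $\lambda,\mu$ contributions, by unimodularity. Since $\mathfrak a$ is abelian, the Jacobi identity gives $[A,B]=\lambda A+\mu B$. Integrability of $J$ translates into linear conditions on $A$, $B$ with respect to $J$ (e.g.\ $B=JA$ type relations restricted to $\mathfrak a\cap J\mathfrak a$ in the first case). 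We then write the Chern structure constants $C^j_{ik}$ and $D^j_{ik}$ in terms of the blocks of $A$, $B$, $v$, $\lambda$, $\mu$.

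\textbf{Step 2: curvature of $D^r_s$.} Using the known formula for the $(r,s)$-connection on a Lie algebra in terms of $C$ and $D$ (a combination of Chern connection terms plus $t$- and $s$-weighted torsion terms, as in \cite{ChenZhengCanonical}), we compute the holomorphic sectional curvature $H(X)=R(X,\bar X,X,\bar X)/|X|^4$. Constancy means $R_{i\bar j k\bar\ell}+R_{k\bar j i\bar\ell}+R_{i\bar\ell k\bar j}+R_{k\bar\ell i\bar j}=2\kappa(\delta_{ij}\delta_{k\ell}+\delta_{i\ell}\delta_{kj})$ for all indices. We test this on $X=e_i$ with $i\ge2$, on $X=e_1$, and on mixed $X=e_1+ze_i$. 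The terms with all indices in $\mathfrak a$ should express $\kappa$ through quantities like $-|A_{ij}|^2$-type sums. A trace/averaging argument over $i,j\ge2$ (summing $H(e_i)$ and using unimodularity to kill the trace terms) should give $\kappa\le0$ and then $\kappa=0$ with strong vanishing, for instance $A^{1,0}$ being normal or skew-Hermitian.

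\textbf{Step 3: conclude.} For $(r,s)\neq(1,0)$, the torsion-dependent coefficients (which involve $t$ and $s$ and are nonzero away from the Chern point) should force the $(2,0)$-parts and the non-normal parts of $A$, $B$ to vanish. They should also force $\lambda=\mu=0$, so that $\mathfrak g/\mathfrak a$ is abelian, and $v=0$ and $A$, $B$ skew-adjoint. Together these give a K\"ahler flat structure, which we verify directly via $d\omega=0$ and the vanishing of the Levi-Civita curvature for commuting skew operators. For the Chern case, the surviving condition should be exactly that $\nabla^c$ is flat, i.e.\ $R^c=0$ expressed in the $C$-coefficients, which we check by substituting the Step 2 constraints.

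The main obstacle is Step 2 in the case $J\mathfrak a\neq\mathfrak a$ with $\mathfrak g/\mathfrak a$ nonabelian: the curvature expressions there are messy. We would first show, using the $e_1$ and mixed directions together with unimodularity, that $\lambda=\mu=0$, and then treat $\mathfrak g/\mathfrak a$ abelian uniformly. A separate subtlety is points $(r,s)$ on the Chen--Nie curve $\Gamma$, where $\kappa=0$ alone does not imply flatness. There we expect the specific structure (abelian ideal of codimension two) to supply the extra vanishing, by exploiting the $\mathfrak a$-directional components more carefully.
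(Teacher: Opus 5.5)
Your outline has the right architecture: an adapted unitary frame, the polarized form of constant holomorphic sectional curvature, and a split by $J\mathfrak a=\mathfrak a$ or not and by whether $\mathfrak g/\mathfrak a$ is abelian. But every step that carries the proof is stated as an expectation, and two of them would fail as described.

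\textbf{First gap: excluding $\ka<0$.} Step 2 claims that the $\mathfrak a$-directional terms plus a trace/unimodularity argument give $\ka\le0$ and then $\ka=0$. The first half is right once the frame is the correct one. In the Cao--Zheng admissible frame one has $D_p=0$ for $p\geq3$ and $D_\alpha$ block lower triangular with respect to $\mathcal U\oplus\mathcal W$, where $\mathcal W=(\mathfrak a\cap J\mathfrak a)^{1,0}$ (not $\mathfrak a^{1,0}$). So $R^c_{p\bar qk\bar\ell}=0$ as soon as $p$ or $q$ lies in $\mathcal W$, and hence $\ka=-\chi\bigl(|\langle Z_1w,\overline w\rangle|^2+|\langle Z_2w,\overline w\rangle|^2\bigr)$ for every unit $w\in\mathcal W$, where $(Z_\alpha)_{pq}=T^q_{\alpha p}$.

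The second half fails. Averaging over $w$ only sees traces. One does get $\tr Z_1=0$, but unimodularity ties $\tr Z_2$ to the trace of the bracket action on the two-dimensional space $\mathfrak a/(\mathfrak a\cap J\mathfrak a)$ (e.g.\ $\tr Z_2=-\I(c+d')/(\sqrt2\,\delta')$ when $\mathfrak g/\mathfrak a$ is abelian), and this need not vanish. In fact all equations with indices in $\mathcal W$ are consistent with $Z_1=0$, $Z_2=2\I\beta I_{n-2}$, $\ka=-4\chi\beta^2<0$.

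Excluding this is most of the paper:
\begin{itemize}
\item a rigidity lemma for $\langle Aw,\overline w\rangle^2+\langle Bw,\overline w\rangle^2\equiv c_0$ to reach that normal form;
\item the mixed components $\widehat v_{\alpha\bar pq\bar r}$ and $\widehat v_{p\bar1p\bar1}$ to kill $V_2$ and the $\mathcal W$-part of $T(e_1,e_2)$;
\item a comparison of the two expressions for $Q_{\alpha\beta}$, giving three scalar equations between $\beta$, $\chi$ and the structure constants of $\mathfrak g/(\mathfrak a\cap J\mathfrak a)$, which are contradicted case by case (with a separate argument for $n=3$).
\end{itemize}
Likewise, $\ka=0$ only yields $Z_1=Z_2=0$. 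Reaching $b=0$, $E_\alpha=V_\alpha=0$ and K\"ahler flatness uses the $e_1,e_2$ diagonal components and the $(1\bar12\bar2)$ component. Your Step 3 asserts all of this (``should force'') without a mechanism. Incidentally, nothing special happens on $\Gamma$: the argument is uniform in $\chi>0$.

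\textbf{Second gap: the Chern case.} Your plan to first prove $\lambda=\mu=0$ and then treat the abelian quotient uniformly is false at $(r,s)=(1,0)$. Here is a counterexample.
\begin{itemize}
\item Take $n\geq3$, $\sigma\neq0$, and $\mathfrak g=\R y\oplus\R Jy\oplus\C^{n-1}$ with $y$ central, $\C^{n-1}$ abelian, and $[Jy,v]=\phi(v)$ for $\phi=\operatorname{diag}(-\sigma,\frac{\sigma}{n-2},\dots,\frac{\sigma}{n-2})$ acting $\C$-linearly. Use the obvious $J$ and metric.
\item It is unimodular and Chern flat: the only non-zero $D$-matrix is the diagonal one attached to $\frac1{\sqrt2}(y-\I Jy)$, and the formula for $R^c$ in terms of the $D^j_{ik}$ gives $R^c=0$.
\item Let $x$ be a unit vector on the first complex axis. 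Then $\mathfrak a=\operatorname{span}_\R\{x,y\}\oplus\C^{n-2}$ is an abelian ideal of codimension two with $J\mathfrak a\neq\mathfrak a$ and $[Jx,Jy]=\sigma Jx\notin\mathfrak a$.
\end{itemize}
So the non-abelian quotient must be handled on its own in the Chern case, which is why the theorem only claims Chern flatness there.

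Also, ``the surviving condition should be exactly $R^c=0$'' is the entire content of that case. $\ka=0$ is immediate, but $H^c\equiv0$ does not formally give $R^c=0$. The paper excludes the generic, half-generic and (abelian-quotient) $b\neq0$ cases by playing $R^c_{1\bar11\bar1}=R^c_{2\bar22\bar2}=0$ against the identities $\|V_\alpha\|^2=2\re\sum_\gamma(E_\alpha)_{\gamma\alpha}\overline{\tr Y_\gamma}$ coming from $\tr Q_{\alpha\alpha}=0$. When $b\neq0$ it also uses $\widehat R^c_{2\bar21\bar2}$. In the remaining cases it first proves $D_1=0$, resp.\ $E_\alpha=V_\alpha=0$, before concluding flatness.
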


Note that K\"ahler flat Hermitian Lie algebras are classified by Barberis, Dotti, and Fino in \cite{BDF} (see also a slight variation in the appendix of \cite{VezzoniYangZheng}), utilizing Milnor's characterization on flat Lie algebras \cite{Milnor}. They are a special kind of 2-step solvable Lie algebras. In a recent work \cite{ZhangZ}, Zhang and Zheng proved that any Levi-Civita flat Hermitian Lie algebra is actually K\"ahler flat. Chern flat Hermitian Lie algebras are not fully classified, but for $\mathfrak g$ containing an abelian ideal $\mathfrak a$ of real codimension two, the Chern flat case is well understood by the work of \cite{GuoZ} and \cite{CaoZhengFV}.

Hermitian Lie algebras $(\mathfrak g,J,g)$ containing an abelian ideal
$\mathfrak a$ of real codimension two can be divided into two subsets: those with $J\mathfrak a = \mathfrak a$ and those with $J\mathfrak a \neq  \mathfrak a$. The first case resembles (and contains) the almost abelian case and is much easier to study. On such Hermitian Lie algebras, Guo and Zheng  \cite{GuoZ} obtained various geometric properties,   Li and Zheng  \cite{LiZhengHSC} proved the Chern (or Levi-Civita) flatness when corresponding holomorphic sectional curvature is assumed to be constant.

The second case, namely those with $J\mathfrak a \neq  \mathfrak a$, is much more challenging from the technical point of view. In \cite{CaoZhengFV, CaoZhengST}, Cao and Zheng constructed a unitary basis that is convenient to express relevant Hermitian geometric quantities, and divided them into three mutually exclusive categories: the generic type, the half-generic type, and the degenerate type. Using these calculations, they successfully confirmed the Fino-Vezzoni conjecture (which states that any compact complex manifold admitting both a balanced metric and a pluriclosed metric must also admit a K\"ahler metric) for such Hermitian Lie algebras. In this paper, we will follow the notations and constructions in \cite{CaoZhengFV}.

The proof is organized according to whether the quotient
$\mathfrak g/\mathfrak a$ is abelian or not.  Section~\ref{sec:nonabelian-quotient}
uses the non-abelian admissible frame of Cao--Zheng and treats the
three structural types separately.  Section~\ref{sec:abelian-quotient}
uses the different admissible frame constructed in
\cite[Appendix~B]{CaoZhengFV} and treats the cases $b=0$ and
$b\neq0$.  This organization is essential: although the two frames
lead to the same block form for the $D$-matrices, their construction,
structure equations and unimodularity identities are different.

\section{Preliminaries}

Let us start with a Hermitian Lie algebra $({\mathfrak g},J,g)$ of real dimension $2n$.
We extend the real inner product $g=\langle \cdot,\cdot \rangle $ complex bilinearly to the complexification $\mathfrak g_{\C}$ and continue to denote the extension by
$\langle\cdot,\cdot\rangle$.  Thus, for a unitary basis
$e=\{e_1,\ldots,e_n\}$ of $\mathfrak g^{1,0}$, one has
$\langle e_i,\overline e_j\rangle=\delta_{ij}$, whereas
$\langle e_i,e_j\rangle=\langle\overline e_i,\overline e_j\rangle=0$.
Let $\{\vphi_1,\ldots,\vphi_n\}$ be the dual coframe.  Following
\cite{VezzoniYangZheng,CaoZhengFV}, write
$C^j_{ik}=\langle[e_i,e_k],\overline e_j\rangle$ and
$D^j_{ik}=\langle[\overline e_j,e_k],e_i\rangle$.  Equivalently,
\[
 [e_i,e_k]=\sum_j C^j_{ik}e_j,\qquad
 [e_i,\overline e_j]
 =\sum_k\bigl(\overline{D^i_{kj}}\,e_k-D^j_{ki}\overline e_k\bigr).
\]
For each $q$, set $D_q=(D^j_{iq})_{1\leq i,j\leq n}$.

Now suppose that ${\mathfrak g}$ is unimodular. This means that $\mbox{tr}(ad_x)=0$ for any $x\in {\mathfrak g}$, or equivalently
\begin{equation} \label{unimodular}
{\mathfrak g} \ \, \mbox{is unimodular}  \ \ \Longleftrightarrow  \ \ \sum_r \big( C^r_{ri} + D^r_{ri}\big) =0 , \, \ \forall \ i.
\end{equation}

Denote by $\nabla^c$ the Chern connection and by $T=T^c$ its torsion tensor. We have
\begin{equation*} \label{Gamma}
\nabla^c e_i = \sum_j \theta_{ij} e_j, \ \ \ \ \theta_{ij} = \sum_k \big( \Gamma^j_{ik} \varphi_k - \overline{\Gamma^i_{jk}}\, \overline{\varphi}_k \big), \ \ \ \ \ \Gamma^j_{ik} = D^j_{ik},
\end{equation*}
where $\{ \varphi_1, \ldots , \varphi_n\}$ is  the coframe dual to $e$ and $\theta$ is the connection matrix of $\nabla^c$ under $e$. The torsion tensor $T$ of $\nabla^c$ has components
\begin{equation} \label{torsion}
T( e_i, \overline{e}_j)=0, \ \ \ \ T(e_i,e_j)  = \sum_k T^k_{ij}e_k, \ \ \ \  \ \ T^j_{ik}= - C^j_{ik} -  D^j_{ik} + D^j_{ki}.
\end{equation}
Denote by $\omega = \sqrt{-1}\sum_i \varphi_i \wedge \overline{\varphi}_i$ the K\"ahler form of the metric $g$.   The structure equation takes the form:
\begin{equation} \label{structure}
d\varphi_i = -\frac{1}{2} \sum_{j,k} C^i_{jk} \,\varphi_j\wedge \varphi_k - \sum_{j,k} \overline{D^j_{ik}} \,\varphi_j \wedge \overline{\varphi}_k.
\end{equation}
Differentiate the above, we get the  first Bianchi identity, which is equivalent to the Jacobi identity in this case:
\begin{equation} \label{Jacobi}
\left\{
\begin{split}
  \sum_r \big( C^r_{ij}C^{\ell}_{rk} + C^r_{jk}C^{\ell}_{ri} + C^r_{ki}C^{\ell}_{rj} \big) \ = \ 0,  \hspace{3.1cm}\\
  \sum_r \big( C^r_{ik}D^{\ell}_{jr} + D^r_{ji}D^{\ell}_{rk} - D^r_{jk}D^{\ell}_{ri} \big) \ = \ 0,  \hspace{2.9cm} \\
  \ \ \sum_r \big( C^r_{ik}\overline{D^r_{j \ell}}  - C^j_{rk}\overline{D^i_{r \ell}} + C^j_{ri}\overline{D^k_{r \ell}} -  D^{\ell}_{ri}\overline{D^k_{j r}} +  D^{\ell}_{rk}\overline{D^i_{jr}}  \big) \ = \ 0 .
\end{split}
\right.
\end{equation}
Let $R^c$ be the Chern curvature tensor, and $H^c$ the Chern holomorphic sectional curvature. The following curvature formula is standard; see
\cite[Lemma 3]{LiZhengHSC}.

\begin{lemma}\label{lem:Chern-formula}
For all indices $i,j,k,\ell$,
\begin{equation}\label{eq:Chern-formula}
\begin{split}
R^c_{i\bar j k\bar\ell}
=\sum_s\bigl(&
D^s_{ki}\overline{D^s_{\ell j}}
-D^\ell_{si}\overline{D^k_{sj}}
-D^j_{si}\overline{D^k_{\ell s}}
-\overline{D^i_{sj}}D^\ell_{ks}
\bigr).
\end{split}
\end{equation}
In particular,
\begin{equation}\label{eq:diagonal-curvature}
 R^c_{i\bar i i\bar i}
 =\sum_s\left(
 |D^s_{ii}|^2-|D^i_{si}|^2
 -2\re\{D^i_{si}\overline{D^i_{is}}\}
 \right).
\end{equation}
\end{lemma}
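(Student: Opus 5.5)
The statement to prove is Lemma~\ref{lem:Chern-formula}. This is a direct computation from the structure equations, which I would carry out in the unitary frame $e$ via Cartan's second structure equation.

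\textbf{Step 1: the curvature form.} The Chern connection matrix is
\[
\theta_{ij}=\sum_k\bigl(D^j_{ik}\vphi_k-\overline{D^i_{jk}}\,\overline\vphi_k\bigr).
\]
The curvature is $\Theta=d\theta-\theta\wedge\theta$, with the sign convention matching $\nabla^c e_i=\sum_j\theta_{ij}e_j$. The coefficients $D^j_{ik}$ are constants on a Lie algebra, so
\[
d\theta_{ij}=\sum_k\bigl(D^j_{ik}\,d\vphi_k-\overline{D^i_{jk}}\,d\overline\vphi_k\bigr).
\]
I will substitute the structure equation \eqref{structure} and its conjugate into this expression.

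\textbf{Step 2: extract the $(1,1)$-part.} Since the Chern curvature has type $(1,1)$, it suffices to extract the coefficient of $\vphi_k\wedge\overline\vphi_\ell$ in $\Theta_{ij}$. This is $R^c_{k\bar\ell i\bar j}$ or $R^c_{i\bar j k\bar\ell}$ depending on convention. The Chern curvature enjoys the Kähler-like symmetry $R_{i\bar jk\bar\ell}=\Theta_{ik}(e_j,\bar e_\ell)$ up to the standard index placement, so I will fix the convention used in \cite{LiZhengHSC}. The contributions to this coefficient are:
\begin{itemize}
\item[(a)] From $d\vphi_k$, the mixed term gives $-\sum_s D^j_{is}\overline{D^{\,\cdot}_{s\,\cdot}}$-type products.
\item[(b)] From $d\overline\vphi_k$, the conjugate mixed term gives $D\cdot\overline D$ products.
\item[(c)] From $\theta\wedge\theta$, one gets $\sum_s\bigl(D^s_{ik}\overline{D^j_{s\ell}}\ \text{-type}\bigr)$ and the reversed product.
\end{itemize}
Each contributes exactly one of the four sums in \eqref{eq:Chern-formula}. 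I will then relabel the summation indices so that the result matches the displayed form.

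\textbf{Step 3: where the care is needed.} The only real obstacle is bookkeeping: the sign and index conventions for $R^c_{i\bar jk\bar\ell}$ versus $\Theta$, and the transposition in $[e_i,\bar e_j]$. I will sanity-check the final formula in two ways:
\begin{itemize}
\item[(i)] It must satisfy $\overline{R_{i\bar jk\bar\ell}}=R_{j\bar i\ell\bar k}$. Conjugating \eqref{eq:Chern-formula} and swapping $i\leftrightarrow j$, $k\leftrightarrow\ell$ permutes the four terms among themselves.
\item[(ii)] The diagonal case must come out real.
\end{itemize}

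\textbf{Step 4: the diagonal formula.} Setting $i=j=k=\ell$ in \eqref{eq:Chern-formula} yields
\[
\sum_s\Bigl(|D^s_{ii}|^2-|D^i_{si}|^2-D^i_{si}\overline{D^i_{is}}-\overline{D^i_{si}}D^i_{is}\Bigr).
\]
The last two terms combine to $-2\re\{D^i_{si}\overline{D^i_{is}}\}$, giving \eqref{eq:diagonal-curvature}.

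Alternatively, since the statement is quoted from \cite[Lemma 3]{LiZhengHSC} with identical notation for $C,D$, one may simply invoke that reference. The computation above is the verification I would record.
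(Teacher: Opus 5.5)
Your route (Cartan's second structure equation in the unitary frame) is sound, and your Step 4 reduction to \eqref{eq:diagonal-curvature} is correct. The paper itself gives no derivation and just quotes \cite[Lemma 3]{LiZhengHSC}, which is your fallback.

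The problem is Step 2. You propose to read off the coefficient of $\varphi_k\wedge\overline{\varphi}_\ell$ in $\Theta_{ij}$, and to settle the convention question by a ``K\"ahler-like symmetry'' of the Chern curvature. For a non-K\"ahler metric there is no such symmetry. In general the only algebraic symmetry of $R^c$ is $\overline{R^c_{i\bar jk\bar\ell}}=R^c_{j\bar i\ell\bar k}$, and nothing exchanges the form pair $(i,\bar j)$ with the endomorphism pair $(k,\bar\ell)$. The identity $R_{i\bar jk\bar\ell}=\Theta_{ik}(e_j,\bar e_\ell)$ you wrote even moves the barred index $j$ into an unbarred slot, so it is not a symmetry in the K\"ahler case either.

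Carried out literally, Step 2 computes $\Theta_{ij}(e_k,\bar e_\ell)$. This is the right-hand side of \eqref{eq:Chern-formula} with $(i,j)$ and $(k,\ell)$ interchanged; for example, its first term is $\sum_sD^s_{ik}\overline{D^s_{j\ell}}$ instead of $\sum_sD^s_{ki}\overline{D^s_{\ell j}}$. When $g$ is not K\"ahler this is a genuinely different expression. Your sanity checks (i) and (ii) cannot catch the error, because both index placements pass them; so do \eqref{eq:diagonal-curvature} and $\widehat R^c$, which are insensitive to the swap.

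What the lemma encodes is
\[
R^c_{i\bar jk\bar\ell}=\langle R^c(e_i,\bar e_j)e_k,\bar e_\ell\rangle=\Theta_{k\ell}(e_i,\bar e_j),
\]
the coefficient of $\varphi_i\wedge\overline{\varphi}_j$ in $\Theta_{k\ell}$. This is the placement under which $(R^c_{\alpha\bar\beta k\bar\ell})_{k,\ell}$ is the matrix of $R^c(e_\alpha,\bar e_\beta)$, a commutator of connection matrices as in Lemma~\ref{lem:curvature-matrix}.

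With this placement, and with the conventions implicit in \eqref{structure} (so $d\theta_{k\ell}(X,Y)=-\theta_{k\ell}([X,Y])$ for left-invariant forms), you do not need to substitute the structure equation at all. Use $[e_i,\bar e_j]=\sum_s(\overline{D^i_{sj}}e_s-D^j_{si}\bar e_s)$, $\theta_{k\ell}(e_s)=D^\ell_{ks}$ and $\theta_{k\ell}(\bar e_s)=-\overline{D^k_{\ell s}}$. Then
\[
d\theta_{k\ell}(e_i,\bar e_j)=-\sum_s\bigl(\overline{D^i_{sj}}D^\ell_{ks}+D^j_{si}\overline{D^k_{\ell s}}\bigr),\qquad
-(\theta\wedge\theta)_{k\ell}(e_i,\bar e_j)=\sum_s\bigl(D^s_{ki}\overline{D^s_{\ell j}}-D^\ell_{si}\overline{D^k_{sj}}\bigr),
\]
and their sum is exactly \eqref{eq:Chern-formula}.
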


For any $4$-tensor $P$ of type $(2,2)$, its symmetrization  tensor $\widehat{P}$ is defined by
\begin{equation}\label{eq:symmetrization}
\widehat P_{i\bar j k\bar\ell}
=\frac14\left(
P_{i\bar j k\bar\ell}
+P_{k\bar j i\bar\ell}
+P_{i\bar\ell k\bar j}
+P_{k\bar\ell i\bar j}
\right).
\end{equation}
Polarization gives
\begin{equation}\label{eq:HSC-polarization}
H^c=\ka
\quad\Longleftrightarrow\quad
\widehat R^c_{i\bar j k\bar\ell}
=\frac{\ka}{2}
(\delta_{ij}\delta_{k\ell}+\delta_{i\ell}\delta_{kj}).
\end{equation}

For a metric connection $D$, let $R^D$ be the curvature of $D$ and $\widehat R^D$ its
symmetrization.  Define the quadratic Chern-torsion tensor $\widehat v$ by
\begin{equation}\label{eq:vhat-definition}
\begin{split}
\widehat v_{i\bar j k\bar\ell}
= \frac14 \sum_a\bigl(&T^j_{ia}\overline{T^k_{\ell a}}
+T^j_{ka}\overline{T^i_{\ell a}}+T^\ell_{ia}\overline{T^k_{ja}}
+T^\ell_{ka}\overline{T^i_{ja}}\bigr).
\end{split}
\end{equation}
In particular,
$\widehat v_{i\bar i i\bar i}=\sum_a|T^i_{ia}|^2$.
By \cite[Equation~(5)]{ChenZhengCanonical},
\begin{equation}\label{eq:canonical-sym-identity}
\widehat R^{D_s^r}_{i\bar j k\bar\ell}
=\widehat R^c_{i\bar j k\bar\ell}
-\chi\widehat v_{i\bar j k\bar\ell}, \ \ \ \ \mbox{where} \ \ \ \chi
=
t^2+\frac{s^2}{4}
=
\frac{(1-r+rs)^2+s^2}{4}.
\end{equation}
Consequently,
\begin{equation}\label{eq:master-equation}
H^{D_s^r}=\ka
\quad\Longleftrightarrow\quad
\widehat R^c_{i\bar j k\bar\ell}
-\chi\widehat v_{i\bar j k\bar\ell}
=\frac{\ka}{2}
(\delta_{ij}\delta_{k\ell}+\delta_{i\ell}\delta_{kj}).
\end{equation}

\subsection{Block curvature formulas}
\label{subsec:block-curvature}

The computations in this subsection are purely algebraic.  They do
not require the existence of an abelian ideal in the Lie algebra, the unimodularity of
the Lie algebra, or any assumption on a quotient Lie algebra.  They
will be applied in Sections~\ref{sec:nonabelian-quotient} and
\ref{sec:abelian-quotient}, after the required block form has been
established separately by the corresponding admissible-frame results
of \cite{CaoZhengFV}.

Let $(\mathfrak g,J,g)$ be a Hermitian Lie algebra of real
dimension $2n$. Since the constant holomorphic sectional curvature conjectures are known to be true for general compact Hermitian surfaces, we will from now on assume that $n\geq3$. Let $e=\{e_1,\ldots,e_n\}$ be a unitary basis
of $\mathfrak g^{1,0}$.  Put
\[
 \mathcal U=\operatorname{span}_{\C}\{e_1,e_2\},\qquad
 \mathcal W=\operatorname{span}_{\C}\{e_3,\ldots,e_n\}.
\]
Thus $\mathfrak g^{1,0}=\mathcal U\oplus\mathcal W$.  Assume that,
relative to this decomposition, the matrices $D_q=(D^j_{iq})$
satisfy
\begin{equation}\label{eq:D-block}
D_\alpha=
\begin{pmatrix}
E_\alpha&0\\
V_\alpha&Y_\alpha
\end{pmatrix},\qquad \alpha=1,2,
\end{equation}
and
\begin{equation}\label{eq:D-p-zero}
D_p=0,\qquad 3\leq p\leq n.
\end{equation}
Here $E_\alpha\in M_2(\C)$,
$V_\alpha\in M_{n-2,2}(\C)$ and
$Y_\alpha\in M_{n-2}(\C)$.  We write
\[
V_\alpha=(v_\alpha^1,v_\alpha^2)
\]
for the two columns of $V_\alpha$.

\begin{lemma}\label{lem:curvature-support}
Under \eqref{eq:D-block}--\eqref{eq:D-p-zero}, if $p\geq3$ or
$q\geq3$, then
\[
R^c_{p\bar qk\bar\ell}=0
\]
for all $k,\ell$.  Consequently, if $n\geq3$ and $H^c=\ka$ is
constant, then $\ka=0$.
\end{lemma}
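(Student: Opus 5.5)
The plan is to read off the vanishing directly from the explicit Chern curvature formula \eqref{eq:Chern-formula} of Lemma~\ref{lem:Chern-formula}, using only the block structure \eqref{eq:D-block}--\eqref{eq:D-p-zero}. Recall that $D_q=(D^j_{iq})$, so $D^j_{iq}$ is the $(i,j)$ entry of $D_q$. The hypotheses then give two vanishing rules.
\begin{itemize}
\item[(a)] $D^j_{iq}=0$ whenever $q\geq3$, by \eqref{eq:D-p-zero}.
\item[(b)] For $q\in\{1,2\}$, $D^j_{iq}=0$ whenever $i\leq2$ and $j\geq3$, since the upper right block of $D_q$ is zero.
\end{itemize}
Consequently, for $j\geq 3$ the entry $D^j_{sq}$ can be nonzero only if $q\leq 2$ and $s\geq 3$. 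This is the single observation that drives the whole argument.

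First take $p\geq3$ and substitute $i=p$ into \eqref{eq:Chern-formula}, checking the four summands one at a time.
\begin{itemize}
\item In $D^s_{kp}\overline{D^s_{\ell q}}$, the factor $D^s_{kp}$ is an entry of $D_p=0$.
\item In $D^\ell_{sp}\overline{D^k_{sq}}$, the factor $D^\ell_{sp}$ vanishes for the same reason.
\item In $D^q_{sp}\overline{D^k_{\ell s}}$, the factor $D^q_{sp}$ vanishes for the same reason.
\item In $\overline{D^p_{sq}}D^\ell_{ks}$, the factor $D^p_{sq}$ has upper index $p\geq3$. By the observation above it is nonzero only if $s\geq3$. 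In that case $D^\ell_{ks}$ is an entry of $D_s=0$, so the product vanishes for every $s$.
\end{itemize}

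Next take $q\geq3$, so $j=q$ in \eqref{eq:Chern-formula}, with $i$ arbitrary.
\begin{itemize}
\item $\overline{D^s_{\ell q}}$ and $\overline{D^k_{sq}}$ are entries of $D_q=0$, which kills the first two terms.
\item $\overline{D^i_{sq}}$ is likewise an entry of $D_q=0$, which kills the fourth term.
\item In the third term $D^q_{si}\overline{D^k_{\ell s}}$, the factor $D^q_{si}$ has upper index $q\geq3$. It is nonzero only when $s\geq3$, and then $D^k_{\ell s}$ is an entry of $D_s=0$.
\end{itemize}
Hence $R^c_{p\bar qk\bar\ell}=0$ whenever $p\geq3$ or $q\geq3$. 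The only step needing any care is the mixed terms, where a factor with a large upper index is paired with a factor whose last lower index is the summation index $s$. Rule (b) forces $s\geq3$, and rule (a) then kills the partner factor.

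For the consequence, suppose $n\geq3$ and $H^c\equiv\ka$. Evaluate the holomorphic sectional curvature at the unit vector $e_3\in\mathcal W$. This gives $\ka=H^c(e_3)=R^c_{3\bar33\bar3}$, which vanishes by the first part. Equivalently, set $i=j=k=\ell=3$ in \eqref{eq:HSC-polarization}: the left side $\widehat R^c_{3\bar33\bar3}=R^c_{3\bar33\bar3}$ is $0$, while the right side equals $\ka$. Therefore $\ka=0$.
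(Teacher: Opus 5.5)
Your proof is correct and follows the paper's route: you read off each of the four summands of \eqref{eq:Chern-formula} against the block form \eqref{eq:D-block}--\eqref{eq:D-p-zero}, split the summation index $s$ in the one mixed term, and then evaluate at the unit vector $e_3$ to get $\ka=0$. The only difference is that for $q\geq3$ the paper reduces to the $p\geq3$ case through the conjugation symmetry $R^c_{p\bar qk\bar\ell}=\overline{R^c_{q\bar p\ell\bar k}}$, whereas you check the four terms directly; both are equally valid.
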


\begin{proof}
We first assume $p\geq3$.  Substituting $i=p$ and $j=q$ into
\eqref{eq:Chern-formula} gives
\begin{equation*}
\begin{split}
R^c_{p\bar qk\bar\ell}
=\sum_s\bigl(&D^s_{kp}\overline{D^s_{\ell q}}
-D^\ell_{sp}\overline{D^k_{sq}} -D^q_{sp}\overline{D^k_{\ell s}}
-\overline{D^p_{sq}}D^\ell_{ks}\bigr).
\end{split}
\end{equation*}
The first three terms contain, respectively, the entries
$D^s_{kp}$, $D^\ell_{sp}$ and $D^q_{sp}$ of the matrix $D_p$.
They vanish by \eqref{eq:D-p-zero}.  For the fourth term, split the
sum over $s$ into $s\geq3$ and $s\in\{1,2\}$.  If $s\geq3$, then
$D^\ell_{ks}=0$ because $D_s=0$.  If $s\in\{1,2\}$, then
$D^p_{sq}$ is the entry in row $s\in\mathcal U$ and column
$p\in\mathcal W$ of $D_q$; it lies in the upper-right block of
$D_q$, which is zero by \eqref{eq:D-block}.  Thus the fourth term
also vanishes, proving
$R^c_{p\bar qk\bar\ell}=0$ when $p\geq3$.

Now assume that $q\geq3$. Taking the
complex conjugate, we conclude that
\[
R^c_{p\bar q k\bar\ell}
=
\overline{R^c_{q\bar p\ell\bar k}}
=0.
\]
Taking $p=q=k=\ell\geq3$ gives
\[
R^c_{p\bar pp\bar p}=0.
\]
Since $n\geq3$, such a $p$ exists. The identity \eqref{eq:HSC-polarization} gives $R^c_{p\bar pp\bar p}=\ka$.  Hence $\ka=0$.
\end{proof}

\begin{lemma}\label{lem:curvature-matrix}
For $\alpha,\beta\in\{1,2\}$, set
\[
\mathcal R_{\alpha\bar\beta}
:=
\bigl(R^c_{\alpha\bar\beta k\bar\ell}\bigr)_{1\leq k,\ell\leq n}.
\]
Then
\begin{equation}\label{eq:curvature-matrix}
\mathcal R_{\alpha\bar\beta}
=
[D_\alpha,D_\beta^*]
-\sum_{\gamma=1}^2
(E_\alpha)_{\gamma\beta}D_\gamma^*
-\sum_{\gamma=1}^2
\overline{(E_\beta)_{\gamma\alpha}}D_\gamma.
\end{equation}
Relative to
$\mathfrak g^{1,0}=\mathcal U\oplus\mathcal W$, write
\[
\mathcal R_{\alpha\bar\beta}
=
\begin{pmatrix}
P_{\alpha\beta}&*\\
*&Q_{\alpha\beta}
\end{pmatrix}.
\]
Then
\begin{align}
P_{\alpha\beta}&:=
\bigl(
R^c_{\alpha\bar\beta p\bar q}
\bigr)_{1\leq p,q\leq2}={}[E_\alpha,E_\beta^*]-V_\beta^*V_\alpha
-\sum_{\gamma=1}^2(E_\alpha)_{\gamma\beta}E_\gamma^*
-\sum_{\gamma=1}^2
\overline{(E_\beta)_{\gamma\alpha}}E_\gamma,
\label{eq:P-block}\\
Q_{\alpha\beta}&:=
\bigl(
R^c_{\alpha\bar\beta p\bar q}
\bigr)_{3\leq p,q\leq n}
={}V_\alpha V_\beta^*+[Y_\alpha,Y_\beta^*]
-\sum_{\gamma=1}^2(E_\alpha)_{\gamma\beta}Y_\gamma^*
-\sum_{\gamma=1}^2
\overline{(E_\beta)_{\gamma\alpha}}Y_\gamma.
\label{eq:Q}
\end{align}
Moreover, if $H^c=0$, then
\begin{equation}\label{eq:Q-zero}
Q_{\alpha\beta}=0,
\qquad
\alpha,\beta\in\{1,2\}.
\end{equation}
\end{lemma}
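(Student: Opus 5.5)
We will prove \eqref{eq:curvature-matrix} by rewriting the general formula \eqref{eq:Chern-formula} of Lemma~\ref{lem:Chern-formula} in matrix notation. We then read off the two diagonal blocks using the block shape \eqref{eq:D-block}--\eqref{eq:D-p-zero}. Finally, \eqref{eq:Q-zero} will follow from Lemma~\ref{lem:curvature-support} and \eqref{eq:HSC-polarization}.

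\emph{The matrix identity.} Fix $i=\alpha$, $j=\beta$, and regard the four sums in \eqref{eq:Chern-formula} as the $(k,\ell)$ entry of an $n\times n$ matrix, where $(D_q)_{k\ell}=D^\ell_{kq}$.
\begin{itemize}
\item The second term is $-\sum_s D^\ell_{s\alpha}\overline{D^k_{s\beta}}=-(D_\beta^*D_\alpha)_{k\ell}$.
\item The first term is $\sum_s D^s_{k\alpha}\overline{D^s_{\ell\beta}}=(D_\alpha D_\beta^*)_{k\ell}$.
\item Together these two terms give $[D_\alpha,D_\beta^*]$.
\end{itemize}
For the third term, $-\sum_s D^\beta_{s\alpha}\overline{D^k_{\ell s}}$, note that $D_s=0$ for $s\ge3$. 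Moreover $D^\beta_{s\alpha}=(E_\alpha)_{s\beta}$ for $s\le2$. Hence only $s=\gamma\in\{1,2\}$ survives, and the term equals $-\sum_\gamma (E_\alpha)_{\gamma\beta}\overline{(D_\gamma)_{\ell k}}=-\sum_\gamma(E_\alpha)_{\gamma\beta}(D_\gamma^*)_{k\ell}$. The fourth term is handled the same way: it equals $-\sum_\gamma\overline{(E_\beta)_{\gamma\alpha}}(D_\gamma)_{k\ell}$. This proves \eqref{eq:curvature-matrix}.

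\emph{The diagonal blocks.} We now multiply out the block matrices. Since
\[
D_\alpha=\begin{pmatrix}E_\alpha&0\\ V_\alpha&Y_\alpha\end{pmatrix},\qquad
D_\beta^*=\begin{pmatrix}E_\beta^*&V_\beta^*\\ 0&Y_\beta^*\end{pmatrix},
\]
the upper-left block of $D_\alpha D_\beta^*$ is $E_\alpha E_\beta^*$. The upper-left block of $D_\beta^*D_\alpha$ is $E_\beta^*E_\alpha+V_\beta^*V_\alpha$. This yields \eqref{eq:P-block}, with the linear terms contributing $E_\gamma^*$ and $E_\gamma$.

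For the lower-right blocks, $D_\alpha D_\beta^*$ contributes $V_\alpha V_\beta^*+Y_\alpha Y_\beta^*$, while $D_\beta^*D_\alpha$ contributes $Y_\beta^*Y_\alpha$. This yields \eqref{eq:Q}.

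\emph{Vanishing of $Q$.} Assume $H^c=0$. Lemma~\ref{lem:curvature-support} states that $R^c_{p\bar q k\bar\ell}=0$ whenever $p\ge3$ or $q\ge3$. Since $R^c$ is symmetric in its $(1,0)$ indices and in its $(0,1)$ indices modulo torsion terms, we should not expect the swap $R^c_{\alpha\bar\beta p\bar q}=R^c_{p\bar q\alpha\bar\beta}$ to hold outright. We therefore use the symmetrization instead. By \eqref{eq:HSC-polarization}, $H^c=0$ gives $\widehat R^c_{\alpha\bar\beta p\bar q}=0$. In the four-term sum \eqref{eq:symmetrization}, the terms $R^c_{p\bar\beta\alpha\bar q}$, $R^c_{\alpha\bar q p\bar\beta}$ and $R^c_{p\bar q\alpha\bar\beta}$ each have a first or second index $\ge3$, so all three vanish by Lemma~\ref{lem:curvature-support}. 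Hence
\[
0=4\widehat R^c_{\alpha\bar\beta p\bar q}=R^c_{\alpha\bar\beta p\bar q},\qquad p,q\ge3,
\]
which gives $Q_{\alpha\beta}=0$.

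The only delicate point is the index bookkeeping in the third and fourth terms, namely which index of $D^\beta_{s\alpha}$ becomes the row of $E_\alpha$. This must be checked against the convention $(D_q)_{ij}=D^j_{iq}$.
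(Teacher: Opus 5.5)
Your proposal is correct and follows the paper's proof essentially step by step. You rewrite the four terms of \eqref{eq:Chern-formula} as matrix products via $(D_q)_{k\ell}=D^\ell_{kq}$ and $D_s=0$ for $s\geq3$, multiply out the block matrices, and then note that three of the four terms of $4\widehat R^c_{\alpha\bar\beta p\bar q}$ vanish by Lemma~\ref{lem:curvature-support}, so $H^c=0$ forces $Q_{\alpha\beta}=0$.
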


\begin{proof}
Fix $\alpha,\beta\in\{1,2\}$.  Substituting
$i=\alpha$ and $j=\beta$ into
\eqref{eq:Chern-formula}, we have
\begin{equation}\label{eq:R-alpha-beta-component}
\begin{aligned}
R^c_{\alpha\bar\beta k\bar\ell}
=\sum_s\bigl(
D^s_{k\alpha}\overline{D^s_{\ell\beta}}
-D^\ell_{s\alpha}\overline{D^k_{s\beta}}
-D^\beta_{s\alpha}\overline{D^k_{\ell s}}
-\overline{D^\alpha_{s\beta}}D^\ell_{ks}
\bigr),\quad \forall \, 1\leq k,\ell\, \leq n.
\end{aligned}
\end{equation}
Recall that $(D_\alpha)_{ks}=D^s_{k\alpha}.$ Therefore,
\[
\sum_sD^s_{k\alpha}\overline{D^s_{\ell\beta}}-\sum_s\overline{D^k_{s\beta}}D^\ell_{s\alpha}=(D_\alpha D_\beta^*)_{k\ell}-(D_\beta^*D_\alpha)_{k\ell}
=([D_\alpha,D_\beta^*])_{k\ell},\quad \forall \, 1\leq k,\ell\, \leq n.
\]
Formula \eqref{eq:D-p-zero} implies that
$D_s=0$ whenever $s\geq3$. Let $s=\gamma$, we have
\[
D^\beta_{\gamma\alpha}=(E_\alpha)_{\gamma\beta},
\qquad
\overline{D^k_{\ell\gamma}}=(D_\gamma^*)_{k\ell},
\]
and hence
\[
-\sum_sD^\beta_{s\alpha}\overline{D^k_{\ell s}}-\sum_s\overline{D^\alpha_{s\beta}}D^\ell_{ks}
=-\sum_{\gamma=1}^2
(E_\alpha)_{\gamma\beta}(D_\gamma^*)_{k\ell}-\sum_{\gamma=1}^2
\overline{(E_\beta)_{\gamma\alpha}}(D_\gamma)_{k\ell},\quad \forall \, 1\leq k,\ell\, \leq n.
\]
Combining the two terms proves \eqref{eq:curvature-matrix}.

We now compute the two diagonal blocks.  By
\eqref{eq:D-block}, we have
\[
D_\alpha D_\beta^*
=
\begin{pmatrix}
E_\alpha E_\beta^*&E_\alpha V_\beta^*\\
V_\alpha E_\beta^*&
V_\alpha V_\beta^*+Y_\alpha Y_\beta^*
\end{pmatrix},
\]
and
\[
D_\beta^*D_\alpha
=
\begin{pmatrix}
E_\beta^*E_\alpha+V_\beta^*V_\alpha&
V_\beta^*Y_\alpha\\
Y_\beta^*V_\alpha&
Y_\beta^*Y_\alpha
\end{pmatrix}.
\]

It follows that the $\mathcal U$-to-$\mathcal U$ block of
the commutator is
\[
\begin{aligned}
\bigl([D_\alpha,D_\beta^*]\bigr)_{\mathcal U\mathcal U}
&=
E_\alpha E_\beta^*
-E_\beta^*E_\alpha
-V_\beta^*V_\alpha=
[E_\alpha,E_\beta^*]
-V_\beta^*V_\alpha,
\end{aligned}
\]
whereas its $\mathcal W$-to-$\mathcal W$ block is
\[
\begin{aligned}
\bigl([D_\alpha,D_\beta^*]\bigr)_{\mathcal W\mathcal W}
&=
V_\alpha V_\beta^*
+Y_\alpha Y_\beta^*
-Y_\beta^*Y_\alpha=
V_\alpha V_\beta^*
+[Y_\alpha,Y_\beta^*].
\end{aligned}
\]

Since the upper-left and lower-right blocks of
$D_\gamma$ are respectively $E_\gamma$ and
$Y_\gamma$, while those of $D_\gamma^*$ are
$E_\gamma^*$ and $Y_\gamma^*$, taking the
$\mathcal U$-to-$\mathcal U$ block in
\eqref{eq:curvature-matrix} gives
\[
\begin{aligned}
P_{\alpha\beta}
={}&[E_\alpha,E_\beta^*]-V_\beta^*V_\alpha-\sum_{\gamma=1}^2
(E_\alpha)_{\gamma\beta}E_\gamma^*
-\sum_{\gamma=1}^2
\overline{(E_\beta)_{\gamma\alpha}}E_\gamma,
\end{aligned}
\]
which is \eqref{eq:P-block}.  Taking the
$\mathcal W$-to-$\mathcal W$ block gives
\[
\begin{aligned}
Q_{\alpha\beta}
={}&V_\alpha V_\beta^*
+[Y_\alpha,Y_\beta^*]-\sum_{\gamma=1}^2
(E_\alpha)_{\gamma\beta}Y_\gamma^*
-\sum_{\gamma=1}^2
\overline{(E_\beta)_{\gamma\alpha}}Y_\gamma,
\end{aligned}
\]
which is \eqref{eq:Q}. Finally, let $p,q\geq3$.  By
\eqref{eq:symmetrization} and Lemma~\ref{lem:curvature-support}, we have
\[
\begin{aligned}
4\widehat R^c_{\alpha\bar\beta p\bar q}
={}&R^c_{\alpha\bar\beta p\bar q}
+R^c_{p\bar\beta\alpha\bar q}+R^c_{\alpha\bar q p\bar\beta}
+R^c_{p\bar q\alpha\bar\beta}=R^c_{\alpha\bar\beta p\bar q}, \quad \forall \, 3\leq p, q \leq n.
\end{aligned}
\]
If $H^c=0$, then the identity
\eqref{eq:HSC-polarization} implies $\widehat R^c_{\alpha\bar\beta p\bar q}=0.$ Hence
\[
R^c_{\alpha\bar\beta p\bar q}=0,
\qquad \forall \, 3\leq p,q\leq n.
\]
By the definition of $Q_{\alpha\beta}$, this is precisely
$Q_{\alpha\beta}=0.$ This completes the proof of the lemma.
\end{proof}

Taking the trace of \eqref{eq:Q-zero} for $\alpha=\beta$ and using
$\tr[A,B]=0$ gives
\begin{equation}\label{eq:trace-Q}
\|V_\alpha\|^2
=2\re\left\{
\sum_{\gamma=1}^2
(E_\alpha)_{\gamma\alpha}\overline{\tr(Y_\gamma)}
\right\}.
\end{equation}
Indeed, $\tr(V_\alpha V_\alpha^*)=\|V_\alpha\|^2$,
$\tr[Y_\alpha,Y_\alpha^*]=0$, and the last two sums in
\eqref{eq:Q} are complex conjugates of each other when
$\alpha=\beta$.

We shall also use the diagonal components.  Substituting the block
entries into \eqref{eq:diagonal-curvature} gives
\begin{equation}\label{eq:r1-general}
R^c_{1\bar11\bar1}=r_1-\|v_1^1\|^2, \quad r_1=
|(E_1)_{12}|^2-|(E_1)_{21}|^2
-2|(E_1)_{11}|^2
-2\re\{(E_1)_{21}\overline{(E_2)_{11}}\},
\end{equation}
and
\begin{equation}\label{eq:r2-general}
R^c_{2\bar22\bar2}=r_2-\|v_2^2\|^2, \quad r_2=
|(E_2)_{21}|^2-|(E_2)_{12}|^2
-2|(E_2)_{22}|^2
-2\re\{(E_2)_{12}\overline{(E_1)_{22}}\}.
\end{equation}

\begin{lemma}\label{lem:commutator-nilpotent}
Let $A,B\in M_m(\C)$ satisfy $[A,B]=\lambda A$ for some
$\lambda\in\C\setminus\{0\}$.  Then $A$ is nilpotent.  In
particular, if $m=2$, then $\tr A=0$, $\det A=0$, and $A^2=0$.
\end{lemma}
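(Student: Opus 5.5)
The idea is to show that every power of $A$ has zero trace, by using the commutator identity to compute $\tr(A^k)$ in two ways.

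First, prove by induction on $k\geq1$ that $[A^k,B]=k\lambda A^k$. The case $k=1$ is the hypothesis. For the inductive step, use the derivation property of the commutator with a fixed matrix:
\[
[A^{k+1},B]=A[A^k,B]+[A,B]A^k=k\lambda A^{k+1}+\lambda A^{k+1}=(k+1)\lambda A^{k+1}.
\]
Taking traces and using $\tr[X,Y]=0$ gives $k\lambda\,\tr(A^k)=0$. Since $\lambda\neq0$, we get $\tr(A^k)=0$ for every $k\geq1$.

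Next, pass from these trace conditions to the eigenvalues. Let $\mu_1,\ldots,\mu_m$ be the eigenvalues of $A$, counted with multiplicity. The conditions say that all power sums $p_k=\sum_i\mu_i^k$ vanish for $k=1,\ldots,m$. By Newton's identities, every elementary symmetric function of the $\mu_i$ then vanishes, so the characteristic polynomial of $A$ is $x^m$. Cayley--Hamilton now gives $A^m=0$, so $A$ is nilpotent.

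An alternative route is through eigenvectors. The identity $[A,B]=\lambda A$ means $\mathrm{ad}_B(A)=-\lambda A$, so $A$ shifts the generalized eigenspaces of $B$ by $-\lambda$. Since $B$ has only finitely many eigenvalues, some power of $A$ must vanish. The trace argument is cleaner, so that is the one I would present.

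Finally, specialize to $m=2$. Nilpotency means both eigenvalues are $0$, so $\tr A=0$ and $\det A=0$. The Cayley--Hamilton identity $A^2-(\tr A)A+(\det A)I=0$ then reduces to $A^2=0$.

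No step is a real obstacle. The only point needing care is the commutator identity $[A^k,B]=k\lambda A^k$, which requires getting the order of the factors right in the derivation expansion.
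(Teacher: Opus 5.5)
Your proof is correct and complete. The identity $[A^k,B]=k\lambda A^k$ is derived with the factors in the right order, and taking traces gives $\tr(A^k)=0$ for all $k\geq1$. Newton's identities (valid over $\C$) and Cayley--Hamilton then give $A^m=0$, and the $m=2$ statements follow.

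The paper gives no proof here; it calls the lemma a standard fact, so there is no route to compare against. Your trace-of-powers argument is the standard proof one would supply. Your alternative sketch is also sound: the relation $BA=A(B-\lambda)$ shows that $A$ sends the generalized $\mu$-eigenspace of $B$ into the generalized $(\mu-\lambda)$-eigenspace.
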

Since this is a standard fact in linear algebra and is well-known, we will omit its proof here.
%
\begin{lemma}\label{lem:circle-rigidity}
Let $A,B\in M_m(\C)$ be Hermitian matrices.  Suppose that
\begin{equation}\label{eq:circle-rigidity-assumption}
\langle Aw,\overline w\rangle^2
+
\langle Bw,\overline w\rangle^2
=
c_0
\end{equation}
for some $c_0\in\R$ and every unit vector $w\in\C^m$.  Then
there exist $\alpha_0,\beta\in\R$ such that
\[
A=\alpha_0I_m,
\qquad
B=\beta I_m.
\]
\end{lemma}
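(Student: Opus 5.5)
The plan is to reduce the statement to the Toeplitz--Hausdorff theorem by packaging the two Hermitian forms into one complex matrix. Put $N=A+\sqrt{-1}\,B\in M_m(\C)$. For a unit vector $w$, the numbers $a(w):=\langle Aw,\overline w\rangle=w^*Aw$ and $b(w):=\langle Bw,\overline w\rangle=w^*Bw$ are real, because $A$ and $B$ are Hermitian. Moreover
\[
w^*Nw=a(w)+\sqrt{-1}\,b(w).
\]
Hence the hypothesis \eqref{eq:circle-rigidity-assumption} says exactly that $|w^*Nw|^2=c_0$ for every unit $w$. In other words, the numerical range
\[
W(N)=\{w^*Nw:\ w\in\C^m,\ |w|=1\}
\]
lies on the circle $\{z\in\C:|z|=\sqrt{c_0}\}$. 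In particular $c_0\geq0$, and if $c_0=0$ the circle is the single point $0$.

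The key step is to invoke the Toeplitz--Hausdorff theorem: $W(N)$ is convex. It is also nonempty and compact, being the continuous image of the unit sphere. A convex subset of a circle of radius $\sqrt{c_0}$ can contain no two distinct points. Indeed, the midpoint of two distinct points on such a circle has modulus strictly less than $\sqrt{c_0}$, and by convexity it would also lie in $W(N)$. Therefore $W(N)=\{\lambda\}$ for some $\lambda\in\C$. Write $\lambda=\alpha_0+\sqrt{-1}\,\beta$ with $\alpha_0,\beta\in\R$.

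Separating real and imaginary parts, we get $w^*(A-\alpha_0 I_m)w=0$ and $w^*(B-\beta I_m)w=0$ for every unit vector $w$, and hence by scaling for every $w\in\C^m$. A Hermitian matrix whose quadratic form vanishes identically is zero. One way to see this is to evaluate at its eigenvectors, which shows all its eigenvalues vanish; alternatively one can use polarization. Thus $A=\alpha_0 I_m$ and $B=\beta I_m$.

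The case $m=1$ is already covered by this argument, since there $A$ and $B$ are real scalars. The only nontrivial input is the convexity of the numerical range, which I take as standard. If one prefers a self-contained argument, I would restrict to two-dimensional subspaces $\mathrm{span}\{u,v\}$ and check directly that $W$ of a $2\times2$ matrix is an ellipse (possibly degenerate). That ellipse can lie on a circle only if it degenerates to a point, which forces the compression of $N$ to every such $2$-plane to be scalar, and hence $N$ itself to be scalar. I expect this convexity step to be the only real obstacle; the rest is immediate.
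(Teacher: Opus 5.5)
Your proof is correct, and it takes a genuinely different route from the paper's.

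\textbf{Your route.} You package the hypothesis as the statement that the numerical range of $N=A+\sqrt{-1}\,B$ lies on the circle $|z|=\sqrt{c_0}$. The Toeplitz--Hausdorff theorem, together with strict convexity of the disk, then collapses $W(N)$ to a single point in one step.

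\textbf{The paper's route.} The paper never mentions the numerical range and proves by hand only what it needs, in two steps.
\begin{enumerate}
\item It diagonalizes $A$ and tests the hypothesis on $w_\theta=(e_p+\E^{\I\theta}e_q)/\sqrt2$. There $\langle Aw_\theta,\overline{w_\theta}\rangle$ does not depend on $\theta$, so $\langle Bw_\theta,\overline{w_\theta}\rangle^2$ is constant. A connectedness argument then forces $\re(\E^{\I\theta}b_{pq})\equiv0$. Hence $B$ is diagonal in an eigenbasis of $A$, so $N$ is normal.
\item With both matrices diagonal, it tests on $\sqrt{1-t}\,e_p+\sqrt t\,e_q$, which traces the segment between the eigenvalues $a_p+\I b_p$ and $a_q+\I b_q$ of $N$. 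The identity $|(1-t)v_p+tv_q|^2=c_0-t(1-t)|v_p-v_q|^2$ then gives $v_p=v_q$.
\end{enumerate}
The paper's second step is exactly your strict-convexity argument, applied to a diagonal matrix, where convexity of the numerical range is evident. Its first step is what allows it to avoid the general theorem.

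\textbf{What each buys.} Your argument is shorter and shows conceptually why the lemma holds. It does rest on a nontrivial classical theorem. The paper's argument is fully elementary and self-contained.

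\textbf{A correction to your fallback remark.} The numerical range of a $2\times2$ matrix is a closed elliptical \emph{disk}, possibly degenerating to a segment or a point. It is the interior of that disk that rules out lying on a circle. The boundary ellipse alone can coincide with the circle. For example, $W\bigl(\begin{pmatrix}0&2r\\0&0\end{pmatrix}\bigr)=\{|z|\le r\}$ has boundary exactly $|z|=r$. So that remark should refer to the filled ellipse.
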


\begin{proof}
The case $m=1$ is immediate, so assume that $m\geq2$.  After a
unitary change of basis, we may write
\[
A=\operatorname{diag}(a_1,\ldots,a_m),
\qquad
a_j\in\R.
\]
Write $B=(b_{jk})$.  We first show that $B$ is diagonal in this
basis.

Fix $p\neq q$ and set
\[
w_\theta=\frac{e_p+\E^{\I\theta}e_q}{\sqrt2},
\qquad
\theta\in\R.
\]
Then
\[
\langle Aw_\theta,\overline{w_\theta}\rangle
=
\frac{a_p+a_q}{2}.
\]
Since $B$ is Hermitian,
\[
\langle Bw_\theta,\overline{w_\theta}\rangle
=
\frac{b_{pp}+b_{qq}}2
+
\operatorname{Re}\left(\E^{\I\theta}b_{pq}\right).
\]
It follows from \eqref{eq:circle-rigidity-assumption} that
\[
\langle Bw_\theta,\overline{w_\theta}\rangle^2
=
c_0-\frac{(a_p+a_q)^2}{4},
\]
which is independent of $\theta$.  The expression on the left
before squaring is a continuous real-valued function of $\theta$
whose image is contained in a set of at most two points.  It is
therefore constant.  Hence
$\operatorname{Re}(\E^{\I\theta}b_{pq})$ is constant in $\theta$.
On the other hand,
\[
\operatorname{Re}\left(\E^{\I(\theta+\pi)}b_{pq}\right)
=
-\operatorname{Re}\left(\E^{\I\theta}b_{pq}\right),
\]
so this constant must be zero.  Taking $\theta=0$ and
$\theta=\pi/2$ gives
\[
\operatorname{Re}b_{pq}
=
\operatorname{Im}b_{pq}
=
0.
\]
Thus $b_{pq}=0$.  Since $p\neq q$ were arbitrary,
\[
B=\operatorname{diag}(b_1,\ldots,b_m),
\qquad
b_j\in\R.
\]

Set $v_j=(a_j,b_j)\in\R^2$.  Taking $w=e_j$ in
\eqref{eq:circle-rigidity-assumption} gives
\[
|v_j|^2=a_j^2+b_j^2=c_0
\]
for every $j$.  Fix $p\neq q$ and, for $t\in[0,1]$, let
\[
w_t=\sqrt{1-t}\,e_p+\sqrt t\,e_q.
\]
Since $A$ and $B$ are diagonal,
\eqref{eq:circle-rigidity-assumption} becomes
\[
\left|(1-t)v_p+tv_q\right|^2=c_0.
\]
But
\[
\begin{aligned}
\left|(1-t)v_p+tv_q\right|^2
&=
(1-t)|v_p|^2+t|v_q|^2
-t(1-t)|v_p-v_q|^2\\
&=
c_0-t(1-t)|v_p-v_q|^2.
\end{aligned}
\]
Choosing $0<t<1$ yields $v_p=v_q$.  Since $p$ and $q$ are
arbitrary, all the vectors $v_j$ coincide.  Thus there exist
$\alpha_0,\beta\in\R$ such that $a_j=\alpha_0$ and $b_j=\beta$
for every $j$. Consequently,
\[ A=\alpha_0I_m, \qquad B=\beta I_m. \]
This completes the proof of the lemma.
\end{proof}

\vspace{0.3cm}

\section{The $J{\mathfrak a} \neq {\mathfrak a}$ and ${\mathfrak g} / {\mathfrak a}$ non-abelian case}
\label{sec:nonabelian-quotient}

Throughout this section, we will assume that {\em  ${\mathfrak g}$ is a unimodular Lie algebra of real dimension $2n$ containing an abelian ideal ${\mathfrak a}$ of codimension $2$,  $J$  a complex structure on ${\mathfrak g}$, and $g=\langle , \rangle $ an inner product on ${\mathfrak g}$ compatible with $J$. }

When $J{\mathfrak a}={\mathfrak a}$, the structure constants $C$ and $D$ take a relatively simple form \cite{GuoZ}, and the Chern and Levi-Civita case of Theorem \ref{thm:main} are proved in \cite{LiZhengHSC}. For other connections $D^r_s$ it can be proved similarly, and we will omit it here since it will involve a whole different set of notations. So from now on we will always assume that $J{\mathfrak a} \neq {\mathfrak a}$. In this section, we will also assume that ${\mathfrak g}/{\mathfrak a}$ is not abelian, and leave the other possibility to the next section. In other words, in this section we will assume that
\begin{equation}\label{eq:nonabelian-assumptions}
  J{\mathfrak a} \neq {\mathfrak a}, \ \  \mbox{and} \ \ {\mathfrak g}/{\mathfrak a} \ \mbox{is non-abelian}.
\end{equation}
Set
\[
\mathfrak a_J:=\mathfrak a\cap J\mathfrak a,
\qquad
\mathfrak a':=\mathfrak a+[\mathfrak g,\mathfrak g],
\qquad
\mathfrak b=\mathfrak a\cap J\mathfrak a'.
\]
By \cite[Lemma 3]{CaoZhengFV}, $\mathfrak a_J$ is an ideal of real
codimension $4$, ${\mathfrak a}'$ is of codimension $1$ in ${\mathfrak g}$, and
\[
\mathfrak a_J\subsetneq\mathfrak b\subsetneq\mathfrak a
\subsetneq\mathfrak a'\subsetneq\mathfrak g.
\]
Furthermore, if $x\in {\mathfrak b}\setminus {\mathfrak a}_J$, then $Jx \in {\mathfrak a}' \setminus {\mathfrak a}$; while if $y\in {\mathfrak a}\setminus {\mathfrak b}$, then $Jy \notin {\mathfrak a}' $.

Choose orthonormal vectors $x,y\in\mathfrak a\cap\mathfrak a_J^\perp$
with $x\in\mathfrak b$, and define
\[
e_1=\frac1{\sqrt2}(x-\I Jx),
\qquad
Y=\frac1{\sqrt2}(y-\I Jy).
\]
Writing
\[
\delta=\langle Jx,y\rangle\in(-1,1),
\qquad
\delta'=\sqrt{1-\delta^2}\in (0,1],
\]
we choose $e_2$ so that
\begin{equation}\label{eq:nonab-admissible-frame}
Y=\I\delta e_1+\delta'e_2,
\qquad
(\mathfrak a_J)^{1,0}
=\operatorname{span}_{\C}\{e_3,\ldots,e_n\}.
\end{equation}
This is the admissible frame of
\cite[Definition above Lemma 4]{CaoZhengFV}.  Notice that this
particular definition uses the non-abelian quotient assumption through
the distinguished line
$\mathfrak b/\mathfrak a_J$ and the requirement $x\in\mathfrak b$.

By \cite[Lemma 4]{CaoZhengFV}, the only possibly non-zero $D$-matrices
in this frame are
\[
D_\alpha=
\begin{pmatrix}E_\alpha&0\\V_\alpha&Y_\alpha\end{pmatrix},
\qquad \alpha=1,2,
\qquad
D_p=0\quad(p\geq3).
\]
Thus all conclusions of Subsection \ref{subsec:block-curvature} apply.
In particular, constant Chern holomorphic sectional curvature would imply
\begin{equation}\label{eq:nonab-kappa-zero}
\ka=0,
\qquad
Q_{\alpha\beta}=0\quad(1\leq\alpha,\beta\leq2).
\end{equation}

More precisely, \cite[Lemma~4]{CaoZhengFV} gives, for
$\alpha,\beta\in\{1,2\}$, $p,q\geq3$, and $1\leq \ast \leq n$,
\begin{equation}\label{eq:nonab-C-D-relations}
\begin{aligned}
& C^{\ast}_{pq} = D^{\ast}_{\ast p} = C^{\alpha }_{\beta p} = D^p_{\alpha\beta} =0, \ \ \ \zeta:=\frac{\I\delta}{\delta'},\\
& C^{\ast}_{1p}= \overline{D^p_{\ast 1}} , \ \ \ C^{\ast}_{2p}= \overline{D^p_{\ast 2}} --2\zeta\overline{D^p_{\ast1}}, \ \ \ C^{\ast}_{12}= \overline{D^2_{\ast 1}} -  \overline{D^1_{\ast 2}} + 2\zeta\overline{D^1_{\ast1}},
\end{aligned}
\end{equation}
Write $V_\alpha=(v_\alpha^1,v_\alpha^2)$ and define
\begin{equation}\label{eq:nonab-Z-definitions}
Z_1=Y_1-Y_1^*,
\qquad
Z_2=Y_2-Y_2^*+2\zeta Y_1^*.
\end{equation}
We regard $Y_\alpha$ and $Z_\alpha$ as the corresponding
complex-linear endomorphisms of $\mathcal W$ in the chosen unitary
basis. Throughout this section, Greek indices
$\alpha,\beta,\gamma$ range over $\{1,2\}$, whereas
$p,q,r$ range over $\{3,\ldots,n\}$. By \eqref{torsion} and
\eqref{eq:nonab-C-D-relations}, we have
\begin{align}
T^q_{1p}
&=-\overline{D^p_{q1}}+D^q_{p1}
=(Z_1)_{pq},
\label{eq:nonab-torsion-1}\\
T^q_{2p}
&=-\overline{D^p_{q2}}
 +2\zeta\overline{D^p_{q1}}
 +D^q_{p2}
=(Z_2)_{pq},
\label{eq:nonab-torsion-2}\\
T^\beta_{\alpha p}
&=D^\beta_{p\alpha}
=(v_\alpha^\beta)_p,
\label{eq:nonab-torsion-3}\\
T^q_{pr}
&=-C^q_{pr}-D^q_{pr}+D^q_{rp}=0,\\
T^*_{pq}&=0,
\label{eq:nonab-torsion-W}
\end{align}
Moreover, the $\mathcal W$-valued component of $T(e_1,e_2)$ is
\begin{equation}\label{eq:nonab-u}
u=(T^p_{12})_{p=3}^n
=-\overline{v_1^2}+\overline{v_2^1}
-2\zeta\overline{v_1^1}, \quad \zeta=\frac{\I\delta}{\delta'}.
\end{equation}
Indeed, $D^p_{12}=D^p_{21}=0$ and the last identity in
\eqref{eq:nonab-C-D-relations} give
$T^p_{12}=-\overline{D^2_{p1}}+\overline{D^1_{p2}}
-2\zeta\overline{D^1_{p1}}$.

We now recall how the real structure constants arise.  Modulo
$\mathfrak a_J$, write
\begin{equation}\label{eq:nonab-real-brackets}
\left\{ \begin{split}
[Jx, x] =  ax + by ;\ \,\\
[Jx, y] =  cx + dy ;\ \,\\
\ [Jy, x] =  a'x + b'y;\\
\ [Jy, y] =  c'x + d'y.
\end{split}
\right.
\end{equation}
where all coefficients are real.  Since $x\in\mathfrak b$, one has
$Jx\in\mathfrak a'\setminus\mathfrak a$.  Therefore
\begin{equation}\label{eq:sigma-definition}
[Jx,Jy]=\sigma Jx\pmod{\mathfrak a_J}
\end{equation}
for a real number $\sigma$.  The quotient is non-abelian, so
$\sigma\neq0$.  The integrability identity
\[
[Jx,Jy]=J([Jx,y]-[Jy,x])
\]
and \eqref{eq:nonab-real-brackets} give
\[
(c-a')Jx+(d-b')Jy
=\sigma Jx\pmod{\mathfrak a_J}.
\]
Since $Jx,Jy$ are linearly independent modulo $\mathfrak a$, it follows
that
\begin{equation}\label{eq:aprime-bprime}
a'=c-\sigma,
\qquad
b'=d.
\end{equation}

Let
\[
A=\begin{pmatrix}a&b\\c&d\end{pmatrix},
\qquad
B=\begin{pmatrix}a'&b'\\c'&d'\end{pmatrix}.
\]
Applying the Jacobi identity
\[
[[Jx,Jy],z]=[Jx,[Jy,z]]-[Jy,[Jx,z]]
\]
for $z=x,y$, and passing to $\mathfrak a/\mathfrak a_J$, yields
\begin{equation}\label{eq:AB-relation}
[A,B]=-\sigma A.
\end{equation}
Since $[A,B]=-\sigma A$ and $\sigma\neq0$,
Lemma~\ref{lem:commutator-nilpotent} implies that $A$ is nilpotent.
Since $A$ is a $2\times2$ matrix, one has $\tr A=\det A=0$; hence
\begin{equation}\label{eq:A-nilpotent-relations}
d=-a,
\qquad
a^2+bc=0.
\end{equation}
Together with \eqref{eq:aprime-bprime}, this also gives $b'=-a$.
Expanding \eqref{eq:AB-relation} entry by entry gives
\begin{equation}\label{eq:nonab-Jacobi-scalars}
bc'=-a(c+\sigma),
\qquad
bd'=a^2-2b\sigma,
\qquad
2ac'+cd'=c^2.
\end{equation}

The three mutually exclusive types of complex structures are those of
\cite[Definition preceding Lemma 5]{CaoZhengFV}:
\[
\begin{array}{lll}
\text{$J$ is generic} &\Longleftrightarrow& b\neq0,\\
\text{$J$ is half-generic} &\Longleftrightarrow& b=0,\ c\neq0,\\
\text{$J$ is degenerate} &\Longleftrightarrow& b=c=0.
\end{array}
\]
Solving \eqref{eq:A-nilpotent-relations} and
\eqref{eq:nonab-Jacobi-scalars} in these three cases gives
\begin{equation}\label{eq:nonab-three-types}
\begin{array}{ll}
\text{$J$ is generic:}&
 b\neq0,\quad c=-a^2/b,\quad
 c'=-\dfrac ab(c+\sigma),\quad d'=-c-2\sigma;\\[1mm]
\text{$J$ is half-generic:}&
 a=b=0,\quad c\neq0,\quad d'=c;\\[1mm]
\text{$J$ is degenerate:}&
 a=b=c=0.
\end{array}
\end{equation}

The matrices $E_1,E_2$ are
\begin{equation}\label{eq:E1-nonab}
E_1=\frac1{\sqrt2}
\begin{pmatrix}
b\delta+\I a&
\displaystyle\frac{-2a\delta+\I c+\I b\delta^2}{\delta'}\\[2mm]
\I b\delta'&-b\delta-\I a
\end{pmatrix},
\end{equation}
and
\begin{equation}\label{eq:E2-nonab}
E_2=\frac1{\sqrt2}
\begin{pmatrix}
\displaystyle\frac{\I(c-\sigma-b\delta^2)}{\delta'}&
\displaystyle\frac{\I(c'+a\delta^2)
+\delta(d'+b\delta^2+\sigma)}{\delta'^2}\\[2mm]
b\delta-\I a&
\displaystyle\frac{\I(d'+b\delta^2)}{\delta'}
\end{pmatrix}.
\end{equation}
By \cite[(22)--(23)]{CaoZhengFV}, the Jacobi identities \eqref{Jacobi} become
\begin{equation}\label{eq:nonab-D-Jacobi}
[D_1,D_2]=\lambda D_1,
\qquad
\lambda:=\frac{\I\sigma}{\sqrt2\,\delta'}\neq0.
\end{equation}
or equivalently,
\begin{align}
&[E_1,E_2]=\lambda E_1,
\label{eq:nonab-Jacobi-E}\\
&[Y_1,Y_2]=\lambda Y_1,
\label{eq:nonab-Jacobi-Y}\\
&V_1E_2+Y_1V_2-V_2E_1-Y_2V_1
=\lambda V_1.
\label{eq:nonab-block-Jacobi}
\end{align}
By Lemma~\ref{lem:commutator-nilpotent}, both $E_1$ and $Y_1$
are nilpotent.

\begin{lemma}[\cite{CaoZhengFV}]\label{lem:nonab-structure}
Let $({\mathfrak g}, J, g)$ be a Hermitian Lie algebra containing an abelian ideal ${\mathfrak a}$ of codimension $2$ satisfying \eqref{eq:nonabelian-assumptions}.
Let
${e_1,\ldots,e_n}$ be the admissible unitary frame
described above. Then we have
\begin{equation}\label{eq:nonab-C12}
C^1_{12}
=-\frac{\I\sigma}{\sqrt2,\delta'},
\qquad
C^2_{12}=0.
\end{equation}
Furthermore, $\mathfrak g$ is unimodular if and only if
\begin{equation}\label{eq:unimod-nonab}
\tr(Y_2)-\overline{\tr(Y_2)}
=\frac{\I}{\sqrt2 \delta'}(2\sigma-d'-c).
\end{equation}
\end{lemma}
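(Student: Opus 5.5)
The lemma has two parts: the values of $C^1_{12},C^2_{12}$, and unimodularity expressed as \eqref{eq:unimod-nonab}. Both follow from expressing $[e_1,e_2]$ and the traces of the adjoint maps in the admissible frame \eqref{eq:nonab-admissible-frame}, using the real brackets \eqref{eq:nonab-real-brackets} and the relations \eqref{eq:aprime-bprime}, \eqref{eq:A-nilpotent-relations}.

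\textbf{Computing $C^\alpha_{12}$.} Since $\mathfrak a$ is abelian, $[x,y]=0$. Since $\mathfrak a_J$ is an ideal, every bracket can be reduced modulo $\mathfrak a_J$, and then only the components along $e_1,e_2$ survive.

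First write $e_1,e_2$ in real terms. We have $e_1=\frac1{\sqrt2}(x-\I Jx)$, and from $Y=\I\delta e_1+\delta' e_2$ we get
\[
e_2=\frac{1}{\delta'}\bigl(Y-\I\delta e_1\bigr).
\]

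Next expand $[e_1,e_2]$ into the brackets $[x,y]$, $[x,Jy]$, $[Jx,y]$, $[Jx,Jy]$ and $[x,Jx]$. The mixed ones are given by \eqref{eq:nonab-real-brackets}, and $[Jx,Jy]$ by \eqref{eq:sigma-definition}.

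The result is a $\mathcal U$-component of the form $(\text{const})\,\sigma Jx$ plus terms of the type $[x,Jx]+\I[Jx,x]$. I would organize the calculation by writing
\[
[e_1,e_2]=\frac1{\delta'}[e_1,Y]-\frac{\I\delta}{\delta'}[e_1,e_1]=\frac1{\delta'}[e_1,Y].
\]
Then $[e_1,Y]=\frac12[x-\I Jx,\,y-\I Jy]$, which modulo $\mathfrak a_J$ equals
\[
\tfrac12\bigl(-\I[x,Jy]-\I[Jx,y]-[Jx,Jy]\bigr)=\tfrac12\bigl(\I(B\text{-row}_1)-\I(A\text{-row}_2)-\sigma Jx\bigr)\ \text{in }x,y.
\]
By \eqref{eq:aprime-bprime} the $x,y$ parts combine to $\tfrac{\I}{2}\bigl((a'-c)x+(b'-d)y\bigr)=-\tfrac{\I\sigma}{2}x$. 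The total is therefore $-\tfrac{\sigma}{2}(Jx+\I x)=-\tfrac{\I\sigma}{\sqrt2}e_1$, which lies in $\mathfrak g^{1,0}$ as it must by integrability.

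Dividing by $\delta'$ gives $C^1_{12}=-\frac{\I\sigma}{\sqrt2\,\delta'}$ and $C^2_{12}=0$.

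\textbf{Unimodularity.} Use the criterion \eqref{unimodular}: $\sum_r(C^r_{ri}+D^r_{ri})=0$ for every $i$.

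For $i=p\geq3$, the relations \eqref{eq:nonab-C-D-relations} give $D^\ast_{\ast p}=0$. They also show that $C^r_{rp}$ vanishes for $r\geq3$ (because $C^\ast_{qp}=0$) and for $r=\alpha$ (because $C^\alpha_{\alpha p}=0$). So these equations hold automatically.

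For $i=1$: the $C$-part is $C^2_{21}=-C^2_{12}=0$. The $D$-part is $\tr D_1=\tr E_1+\tr Y_1$, and both traces vanish by nilpotency (Lemma \ref{lem:commutator-nilpotent} applied to \eqref{eq:nonab-Jacobi-E}, \eqref{eq:nonab-Jacobi-Y}). So this equation is automatic too.

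For $i=2$: the $C$-part is $C^1_{12}$. The $D$-part is
\[
\tr D_2=\tr E_2+\tr Y_2,\qquad \tr E_2=\frac{\I}{\sqrt2\,\delta'}(c-\sigma+d')
\]
by \eqref{eq:E2-nonab}, where the $b\delta^2$ terms cancel. The condition becomes
\[
\tr Y_2=\frac{\I}{\sqrt2\,\delta'}(2\sigma-c-d').
\]

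\textbf{Main obstacle.} The statement only asserts the anti-Hermitian part, $\tr Y_2-\overline{\tr Y_2}$. So I must check whether \eqref{unimodular} really pins down the full complex trace, or whether the real form of unimodularity ($\tr ad$ on real vectors) only gives the imaginary part. The subtle step is this: unimodularity means $\tr_{\R}ad_z=0$ for real $z$. For $z=e_2+\bar e_2$ and $z=\I(e_2-\bar e_2)$ this yields the real and imaginary parts of $\sum_r(C^r_{r2}+D^r_{r2})$ together with the conjugate contributions from the $\bar e$ directions.

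I would therefore recompute $\tr_{\C}ad_{e_2}$ on $\mathfrak g_\C$ directly. The $\mathfrak g^{1,0}$ block contributes $\sum_r C^r_{r2}+\sum_r D^r_{r2}$-type terms. The $\mathfrak g^{0,1}$ block contributes the conjugate-index terms $-\sum_k\overline{D^k_{k2}}$, since $[e_2,\bar e_k]$ has $\bar e_k$-coefficient $-D^k_{k2}$ with appropriate conjugation.

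This produces exactly the difference $\tr D_2-\overline{\tr D_2}$ plus $C^1_{12}$. Inserting $\tr E_2$, which is purely imaginary, gives \eqref{eq:unimod-nonab}. The index bookkeeping in this trace is the part I expect to need the most care. The converse direction follows because every step above is an equivalence.
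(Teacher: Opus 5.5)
\textbf{Your computation of $C^1_{12}$ and $C^2_{12}$ is correct.} Modulo $\mathfrak a_J$ one has $[e_1,Y]=-\frac{\I\sigma}{\sqrt2}e_1$, and $[e_1,e_2]=\frac1{\delta'}[e_1,Y]$.

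\textbf{The unimodularity part has a genuine gap.} In the sums $\sum_r C^r_{r\alpha}$ you kept only $r\in\{1,2\}$. The terms $C^p_{p\alpha}$ with $p\geq3$ do not vanish: \eqref{eq:nonab-C-D-relations} kills only $C^\ast_{pq}$ and $C^\alpha_{\beta p}$. The relations $C^q_{1p}=\overline{D^p_{q1}}$ and $C^q_{2p}=\overline{D^p_{q2}}-2\zeta\overline{D^p_{q1}}$ give
\[
\sum_{p\geq3}C^p_{p1}=-\overline{\tr(Y_1)},\qquad
\sum_{p\geq3}C^p_{p2}=-\overline{\tr(Y_2)}+2\zeta\,\overline{\tr(Y_1)}.
\]
Without these terms you obtained $\tr(Y_2)=\frac{\I}{\sqrt2\,\delta'}(2\sigma-c-d')$. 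That is not equivalent to \eqref{eq:unimod-nonab}: it forces $\re\tr(Y_2)=0$, and it gives twice the stated value of $\tr(Y_2)-\overline{\tr(Y_2)}$.

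\textbf{Your proposed repair does not fix this.} Criterion \eqref{unimodular} is already the full complex equation $\tr_{\C}(ad_{e_i})=-\sum_r(C^r_{ri}+D^r_{ri})=0$. It is equivalent to unimodularity because $\tr(ad_{\bar e_i})=\overline{\tr(ad_{e_i})}$, so there is no real-versus-imaginary-part issue. By the bracket formula, the $\bar e_k$-coefficient of $[e_2,\bar e_k]$ is $-D^k_{k2}$, not its conjugate, so the $\mathfrak g^{0,1}$-block contributes $-\tr D_2$. The conjugate $\overline{\tr(Y_2)}$ actually comes from the $\mathcal W$-diagonal of $ad_{e_2}$ on $\mathfrak g^{1,0}$, that is, from the $C^p_{2p}$ you dropped. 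Taken literally, your combination $C^1_{12}+\tr D_2-\overline{\tr D_2}$ contains $2\tr E_2$, since $\tr E_2$ is purely imaginary. It would yield $\tr(Y_2)-\overline{\tr(Y_2)}=\frac{\I}{\sqrt2\,\delta'}(3\sigma-2c-2d')$, so your final claim that it produces \eqref{eq:unimod-nonab} is unjustified.

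\textbf{The correct bookkeeping.} The $i=2$ equation reads
\[
C^1_{12}+\tr E_2+\tr(Y_2)-\overline{\tr(Y_2)}+2\zeta\,\overline{\tr(Y_1)}=0.
\]
Insert $\tr(Y_1)=0$ (from \eqref{eq:nonab-Jacobi-Y}, independent of unimodularity), $\tr E_2=\frac{\I(c-\sigma+d')}{\sqrt2\,\delta'}$, and your value of $C^1_{12}$. This gives exactly \eqref{eq:unimod-nonab}. The $i=1$ equation becomes $\tr E_1+\tr(Y_1)-\overline{\tr(Y_1)}=0$, which is automatic. The $i=p\geq3$ equations are automatic as you said, so the equivalence follows.
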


Put $\tau_\alpha:=\tr(Y_\alpha)$ for $\alpha=1,2$.  Taking the
trace of the second identity in \eqref{eq:nonab-Jacobi-Y} and using
$\tr[A,B]=0$ for arbitrary square matrices $A,B$, we obtain
$0=\lambda\tr(Y_1)$.  Since $\lambda\neq0$, it follows that
\begin{equation}\label{eq:nonab-trY1}
\tau_1=\tr(Y_1)=0.
\end{equation}
This identity depends only on the non-abelian admissible-frame
relations and will be used in the Chern and canonical-connection
arguments below.

\subsection{The Chern case: generic type}

In the generic case,
\begin{equation}\label{eq:generic-data}
b\neq0,\qquad
c=-\frac{a^2}{b},\qquad
c'=-\frac ab(c+\sigma),\qquad
d'=-c-2\sigma .
\end{equation}
The trace identities needed in the proof are consequences of the
structural equations and therefore are recorded before the
proposition.  By \eqref{eq:unimod-nonab} and $d'=-c-2\sigma$,
\begin{align*}
\tau_2-\overline{\tau_2}
&=\frac{\I}{\sqrt2\,\delta'}(2\sigma-d'-c)\\
&=\frac{\I}{\sqrt2\,\delta'}
  \{2\sigma-(-c-2\sigma)-c\}\\
&=\frac{2\sqrt2\,\I\sigma}{\delta'}.
\end{align*}
Since $\tau_2-\overline{\tau_2}=2\I\,\im\tau_2$, we obtain
\begin{equation}\label{eq:generic-ImtrY2}
\im\tau_2=\frac{\sqrt2\,\sigma}{\delta'}.
\end{equation}

\begin{proposition}\label{prop:generic}
Let $(\mathfrak g,J,g)$ be a unimodular Hermitian Lie algebra of
complex dimension $n\geq3$ containing an abelian ideal
$\mathfrak a$ of real codimension two.  Assume that
$J\mathfrak a\neq\mathfrak a$, that
$\mathfrak g/\mathfrak a$ is non-abelian, and that the associated
complex structure is of generic type, equivalently $b\neq0$ in
\eqref{eq:nonab-real-brackets}.  Then the Chern holomorphic
sectional curvature of $g$ cannot be constant.
\end{proposition}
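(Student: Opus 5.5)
Argue by contradiction: suppose $H^c=\ka$ is constant. By Lemma~\ref{lem:curvature-support} and \eqref{eq:nonab-kappa-zero}, $\ka=0$ and $Q_{\alpha\beta}=0$ for all $\alpha,\beta\in\{1,2\}$. From $H^c=0$ we also get $R^c_{1\bar11\bar1}=R^c_{2\bar22\bar2}=0$. The plan is to show that these scalar identities are incompatible with the generic data \eqref{eq:generic-data}. The key inputs are the trace identity \eqref{eq:trace-Q}, the formulas \eqref{eq:r1-general}--\eqref{eq:r2-general}, $\tau_1=0$ from \eqref{eq:nonab-trY1}, and $\im\tau_2=\sqrt2\sigma/\delta'\neq0$ from \eqref{eq:generic-ImtrY2}.

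\textbf{Step 1.} Apply \eqref{eq:trace-Q} with $\alpha=1$. Since $\tau_1=0$, only the $\gamma=2$ term survives, giving
\[
\|V_1\|^2=2\re\{(E_1)_{21}\overline{\tau_2}\}.
\]
By \eqref{eq:E1-nonab}, $(E_1)_{21}=\I b\delta'/\sqrt2$ is purely imaginary, so $\|V_1\|^2=2\,(b\delta'/\sqrt2)\,\im\tau_2=2b\sigma$. In particular, $b\sigma\ge0$.

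\textbf{Step 2.} Apply \eqref{eq:trace-Q} with $\alpha=2$:
\[
\|V_2\|^2=2\re\{(E_2)_{12}\overline{\tau_2}\}.
\]
Here $\re\tau_2$ is still unknown. So I would also use $R^c_{1\bar11\bar1}=0$, that is $\|v_1^1\|^2=r_1$, which is computable from $E_1,E_2$. Using $c=-a^2/b$, $d'=-c-2\sigma$ and $c'=-\frac ab(c+\sigma)$, evaluate $r_1$ and $r_2$ explicitly as functions of $a,b,\sigma,\delta$. The expected outcome is that $r_1$ is a sign-definite quantity such as $-b^2\delta'^2/2-\dots$, possibly involving $b\sigma$. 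Combining $\|v_1^1\|^2\le\|V_1\|^2=2b\sigma$ with $\|v_1^1\|^2=r_1$ should then force an inequality that fails when $b\neq0$.

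\textbf{Step 3.} If the diagonal entries alone do not suffice, use the off-diagonal trace information. Take the trace of $Q_{12}=0$, and also the traces of $Q_{\alpha\beta}$ after multiplying by $Y_\gamma$ or $Y_\gamma^*$. Together with $[Y_1,Y_2]=\lambda Y_1$ and the nilpotency of $Y_1$, this can control $\tr(Y_1Y_1^*)$ and $\re\tau_2$. A promising relation comes from pairing $Q_{11}=0$ with $Y_1^*$: the term $[Y_1,Y_1^*]$ contributes $\tr([Y_1,Y_1^*]Y_1^*)=0$, which leaves relations among $V_1$, $Y_1$ and $E_1$. In addition, \eqref{eq:nonab-block-Jacobi} restricted to columns links $v_1^1$ and $v_1^2$ to $V_2$ through the entries of $E_1$ and $E_2$, which lets one express $\|v_1^1\|$ in terms of $\|V_1\|$.

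\textbf{Main obstacle.} The hardest step is the explicit evaluation of $r_1$ (and $r_2$) from \eqref{eq:E1-nonab}--\eqref{eq:E2-nonab} under the generic relations, and then finding the right sign combination. I would first try the combination $R^c_{1\bar11\bar1}=0$ together with $\|V_1\|^2=2b\sigma$ and $\|v_1^1\|^2\le\|V_1\|^2$. Only if this does not close would I bring in Step 3.
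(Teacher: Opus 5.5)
Your Step 1 is correct and is exactly the paper's first step. However, the proposal never reaches a contradiction, and it stalls because of an indexing error in Step 2. In \eqref{eq:trace-Q} the coefficient of $\overline{\tau_\gamma}$ is $(E_\alpha)_{\gamma\alpha}$. So for $\alpha=2$ and $\tau_1=0$ the surviving term is $(E_2)_{22}\overline{\tau_2}$, not $(E_2)_{12}\overline{\tau_2}$. By \eqref{eq:E2-nonab}, $(E_2)_{22}=\I(d'+b\delta^2)/(\sqrt2\,\delta')$ is purely imaginary, so $\re\tau_2$ never enters. With $U=a^2/b^2$, $H=\sigma/b$, $\eta=\delta^2$ one gets
\[
\|V_2\|^2=\frac{2\sigma(d'+b\delta^2)}{(\delta')^2}=\frac{2b^2H}{1-\eta}\,(U+\eta-2H).
\]
Since $\|V_1\|^2=2b^2H$ and $\sigma b\neq0$ force $H>0$, this yields the essential bound $2H\le U+\eta$.

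Without that bound, the information you have is insufficient. Contrary to your expectation, $r_1=b^2\bigl(H+F/(2(1-\eta))\bigr)$ with $F=(U+\eta)^2-(1-\eta)^2$ is not sign-definite, and adding $r_2\ge0$ does not help. For instance, $U=1$, $\eta=0$, $H=\frac12$ gives $r_1=\frac12 b^2\in[0,2b^2H]$ and $r_2=\frac78 b^2>0$, so every constraint available to you holds. This point is excluded only because there $\|V_2\|^2=0<r_2$.

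The second missing ingredient is to actually use $R^c_{2\bar22\bar2}=0$ in the form $r_2=\|v_2^2\|^2\le\|V_2\|^2$. A direct computation from \eqref{eq:E1-nonab}--\eqref{eq:E2-nonab} gives
\[
\frac{r_2-\|V_2\|^2}{b^2}=-\frac{(U+\eta)\{F+H^2-2H(U+1)\}}{2(1-\eta)^2}.
\]
Since $U+\eta\ge2H>0$, this gives $F+H^2-2H(U+1)\ge0$. On the other hand, $r_1=\|v_1^1\|^2\le\|V_1\|^2$ gives $F\le2H(1-\eta)$. Hence $F+H^2-2H(U+1)\le H^2-2H(U+\eta)\le-3H^2<0$, a contradiction. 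All four inequalities are needed: $H>0$, $2H\le U+\eta$, $F\le2H(1-\eta)$, and the $r_2$ one. This is the paper's argument.

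Your Step 3 (traces of $Q_{12}$, pairings with $Y_\gamma^*$, the block Jacobi identity) is unnecessary, and as sketched it supplies none of these inequalities. As written, the proposal is a list of correct inputs together with one wrong formula, rather than a proof.
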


\begin{proof}
Assume that the Chern holomorphic sectional
curvature is constant.  By Lemma \ref{lem:curvature-support}, its constant value is zero, $H^c\equiv0.$ In particular,
\begin{equation}\label{eq:generic-diagonal-zero}
R^c_{1\bar11\bar1}=0,
\qquad
R^c_{2\bar22\bar2}=0,
\end{equation}
and \eqref{eq:nonab-kappa-zero} gives
$$Q_{\alpha\beta}=0,\quad \forall \, 1\leq\alpha,\beta\leq2.$$ Let
\begin{equation}\label{eq:generic-dimensionless}
 U:=\frac{a^2}{b^2},
 \qquad
 H:=\frac{\sigma}{b},
 \qquad
 \eta:=\delta^2.
\end{equation}
Thus
\begin{equation}\label{eq:generic-LS}
 \left\{ \begin{split} L:=1-\eta=(\delta')^2>0,\ \
 S:=U+\eta, \hspace{4.0cm}\\
  c=-\frac{a^2}{b}=-bU, \ \
 d'=-c-2\sigma=b(U-2H), \ \  c'=a(U-H). \end{split} \right.
\end{equation}
Note that the symbol $H$ in \eqref{eq:generic-dimensionless} is a dimensionless
structure parameter and is not the holomorphic sectional curvature
$H^c$.

\medskip
\noindent\emph{Step 1: computation of $\|V_1\|^2$.}
Taking the trace of $Q_{11}=0$ and using
\eqref{eq:trace-Q}, \eqref{eq:E1-nonab}, and \eqref{eq:nonab-trY1}, we obtain
\[
\begin{aligned}
\|V_1\|^2=
2\re\left\{
(E_1)_{21}\overline{\tau_2}
\right\}
=
2\re\left\{
\frac{\I b\delta'}{\sqrt2}\,
\overline{\tau_2}
\right\}
=
\sqrt2\,b\delta'\,\im\tau_2.
\end{aligned}
\]
Using \eqref{eq:generic-ImtrY2}, we conclude that
\begin{equation}\label{eq:generic-V1-norm}
\|V_1\|^2
=
2b\sigma
=
2b^2H.
\end{equation}
The left hand side is non-negative, so $H\geq0$.  Moreover,
$b\neq0$ in the generic case and $\sigma\neq0$ in the non-abelian
quotient case; hence $H=\sigma/b\neq0$.  Thus
\begin{equation}\label{eq:generic-H-positive}
        H>0.
\end{equation}

\medskip
\medskip
\noindent\emph{Step 2: computation of $\|V_2\|^2$.}
Taking the trace of $Q_{22}=0$ and using
\eqref{eq:trace-Q}, \eqref{eq:E2-nonab}, \eqref{eq:nonab-trY1}, and \eqref{eq:generic-ImtrY2}, we obtain
\begin{equation}\label{eq:generic-V2-first}
\begin{aligned}
\|V_2\|^2=2\re\left\{(E_2)_{22}\overline{\tau_2}\right\}=\frac{2\sigma}{(\delta')^2}
(d'+b\delta^2).
\end{aligned}
\end{equation}
By \eqref{eq:generic-dimensionless} and \eqref{eq:generic-LS}, the formula \eqref{eq:generic-V2-first} becomes
\begin{equation}\label{eq:generic-V2-norm}
 \|V_2\|^2
 =\frac{2b^2H}{1-\eta}(U+\eta-2H).
\end{equation}
Because $b^2>0$, $H>0$ and $1-\eta>0$, the non-negativity of
$\|V_2\|^2$ implies
\begin{equation}\label{eq:generic-H-bound}
        H\leq\frac{U+\eta}{2}.
\end{equation}

\medskip
\noindent\emph{Step 3: computation of $r_1$.}
Recall from \eqref{eq:r1-general} that
\begin{align}
 r_1={}&|(E_1)_{12}|^2-|(E_1)_{21}|^2
 -2|(E_1)_{11}|^2-2\re\{(E_1)_{21}\overline{(E_2)_{11}}\}.
 \label{eq:generic-r1-start}
\end{align}
We compute the four terms separately. By \eqref{eq:E1-nonab}, \eqref{eq:E2-nonab},  \eqref{eq:generic-dimensionless}, and \eqref{eq:generic-LS}, we have
\begin{align*}
|(E_1)_{12}|^2=\frac{b^2(U+\eta)^2}{2(1-\eta)},\quad |(E_1)_{21}|^2 =\frac{b^2(1-\eta)}2,\ \ \ \ \ \\ |(E_1)_{11}|^2 =\frac{b^2(U+\eta)}2, \quad(E_2)_{11}=-\frac{\I b(U+H+\eta)}{\sqrt2\,\delta'}.
\end{align*}
and therefore
\begin{equation}\label{eq:generic-cross-r1}
-2\re\{(E_1)_{21}\overline{(E_2)_{11}}\}
=b^2(U+H+\eta).
\end{equation}
Substituting these four expressions into
\eqref{eq:generic-r1-start} gives
\begin{align*}
\frac{r_1}{b^2}=H+\frac{(U+\eta)^2-(1-\eta)^2}{2(1-\eta)}.
\end{align*}
Define
\begin{equation}\label{eq:generic-F-definition}
        F:=U^2+2\eta U+2\eta-1.
\end{equation}
Since
\[
(U+\eta)^2-(1-\eta)^2
=U^2+2\eta U+2\eta-1=F,
\]
we obtain
\begin{equation}\label{eq:generic-r1}
\frac{r_1}{b^2}
 =H+\frac{F}{2(1-\eta)}.
\end{equation}

\medskip
\noindent\emph{Step 4: an upper bound for $F$.}
By \eqref{eq:generic-diagonal-zero} and
\eqref{eq:r1-general},
\[
        r_1=\|v_1^1\|^2.
\]
Since $V_1=(v_1^1,v_1^2)$,
\[
\|V_1\|^2=\|v_1^1\|^2+\|v_1^2\|^2,
\]
and therefore $\|v_1^1\|^2\leq\|V_1\|^2$.  Using
\eqref{eq:generic-V1-norm},
\[
\frac{\|v_1^1\|^2}{b^2}
 \leq\frac{\|V_1\|^2}{b^2}=2H.
\]
Substituting \eqref{eq:generic-r1} and using $1-\eta>0$ gives
\begin{equation}\label{eq:generic-F-upper}
        F\leq2H(1-\eta).
\end{equation}

\medskip
\noindent\emph{Step 5: computation of
$r_2-\|V_2\|^2$.}
Using \eqref{eq:E1-nonab} \eqref{eq:E2-nonab}, \eqref{eq:generic-dimensionless}, and \eqref{eq:generic-LS}, we obtain
\begin{equation*}\label{eq:generic-E212}
|(E_2)_{21}|^2=\frac{b^2S}{2},\quad
 |(E_2)_{12}|^2=\frac{b^2S(S-H)^2}{2L^2},\quad
\end{equation*}
and
\begin{equation*}\label{eq:generic-E222}
  2|(E_2)_{22}|^2=\frac{b^2(S-2H)^2}{L},\quad
 -2\re\{(E_2)_{12}\overline{(E_1)_{22}}\}=\frac{b^2S(S-H)}{L}.
\end{equation*}
Substituting these four expressions into \eqref{eq:r2-general} obtains
\begin{equation}\label{eq:generic-r2-expanded}
\frac{r_2}{b^2}
=\frac S2-\frac{S(S-H)^2}{2L^2}
 -\frac{(S-2H)^2}{L}
 +\frac{S(S-H)}{L}.
\end{equation}
On the other hand, by \eqref{eq:generic-V2-norm}, we have
\begin{equation}\label{eq:generic-V2-SL}
        \frac{\|V_2\|^2}{b^2}
        =\frac{2H(S-2H)}{L}.
\end{equation}
Subtracting \eqref{eq:generic-V2-SL} from
\eqref{eq:generic-r2-expanded} and multiplying by $2L^2$ gives
\begin{align*}
2L^2\frac{r_2-\|V_2\|^2}{b^2}
= -S\{S^2-L^2+H^2-2H(S+L).
\end{align*}
Now
\[
S^2-L^2=(U+\eta)^2-(1-\eta)^2=F,
\qquad
S+L=U+1.
\]
Therefore
\begin{equation}\label{eq:generic-r2-difference}
 \frac{r_2-\|V_2\|^2}{b^2}
 =-\frac{(U+\eta)\{F+H^2-2H(U+1)\}}
 {2(1-\eta)^2}.
\end{equation}

\medskip
\noindent\emph{Step 6: contradiction.}
By \eqref{eq:generic-diagonal-zero} and
\eqref{eq:r2-general},
\[
        r_2=\|v_2^2\|^2\leq\|V_2\|^2.
\]
Hence the left hand side of
\eqref{eq:generic-r2-difference} is non-positive.  Furthermore,
\eqref{eq:generic-H-bound} and \eqref{eq:generic-H-positive} imply
\[
        U+\eta\geq2H>0.
\]
Since the coefficient
$-(U+\eta)/\{2(1-\eta)^2\}$ in
\eqref{eq:generic-r2-difference} is strictly negative, we conclude
that
\begin{equation}\label{eq:generic-B-positive}
        F+H^2-2H(U+1)\geq0.
\end{equation}
On the other hand, \eqref{eq:generic-F-upper} gives
\begin{align*}
F+H^2-2H(U+1)
&\leq2H(1-\eta)+H^2-2H(U+1)\\
&=H^2-2H(U+\eta).
\end{align*}
Using again $U+\eta\geq2H$ and $H>0$, we obtain
\[
H^2-2H(U+\eta)
\leq H^2-4H^2=-3H^2<0.
\]
This contradicts \eqref{eq:generic-B-positive}.  Therefore the generic
case cannot admit constant Chern holomorphic sectional curvature.
\end{proof}

\subsection{The Chern case: the half-generic type}

Here
\[
        a=b=0,\qquad c\neq0,\qquad d'=c.
\]

\begin{proposition}\label{prop:half}
Let $(\mathfrak g,J,g)$ be a unimodular Hermitian Lie algebra of
complex dimension $n\geq3$ containing an abelian ideal
$\mathfrak a$ of real codimension two.  Assume that
$J\mathfrak a\neq\mathfrak a$, that
$\mathfrak g/\mathfrak a$ is non-abelian, and that the associated
complex structure is of half-generic type, equivalently
$a=b=0$ and $c\neq0$.  Then the Chern holomorphic sectional
curvature of $g$ cannot be constant.
\end{proposition}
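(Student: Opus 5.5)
The plan is to follow the same route as Proposition~\ref{prop:generic}, but now the half-generic relations make the matrix $E_1$ so degenerate that the argument should close after the first two steps. Suppose the Chern holomorphic sectional curvature is constant. By Lemma~\ref{lem:curvature-support} it then vanishes identically, so $R^c_{1\bar11\bar1}=0$. By \eqref{eq:nonab-kappa-zero} we also have $Q_{\alpha\beta}=0$ for all $\alpha,\beta$.

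First substitute $a=b=0$ and $d'=c$ into \eqref{eq:E1-nonab}. This gives
\[
E_1=\frac1{\sqrt2}\begin{pmatrix}0&\I c/\delta'\\ 0&0\end{pmatrix},
\]
so $(E_1)_{11}=(E_1)_{21}=0$ and $(E_1)_{12}=\I c/(\sqrt2\,\delta')$. Next, the trace identity \eqref{eq:trace-Q} with $\alpha=1$ reads
\[
\|V_1\|^2=2\re\{(E_1)_{11}\overline{\tau_1}+(E_1)_{21}\overline{\tau_2}\}.
\]
Both coefficients are zero, so $V_1=0$. In particular $v_1^1=0$. Note that this step uses neither \eqref{eq:nonab-trY1} nor the unimodularity relation \eqref{eq:unimod-nonab}.

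Now compute $r_1$ from \eqref{eq:r1-general}. Every term except $|(E_1)_{12}|^2$ contains $(E_1)_{11}$ or $(E_1)_{21}$, hence vanishes. Therefore
\[
R^c_{1\bar11\bar1}=r_1-\|v_1^1\|^2=\frac{c^2}{2(\delta')^2}.
\]
This is strictly positive because $c\neq0$ in the half-generic case and $\delta'\in(0,1]$. It contradicts $R^c_{1\bar11\bar1}=0$, which proves the proposition.

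The only real obstacle is bookkeeping: one must check the entries of $E_1$ in \eqref{eq:E1-nonab} under $a=b=0$, in particular that $(E_1)_{21}=\I b\delta'/\sqrt2$ really vanishes. One must also check that the cross term $-2\re\{(E_1)_{21}\overline{(E_2)_{11}}\}$ in $r_1$ then drops out, whatever value $(E_2)_{11}$ takes. If for some reason the diagonal component alone did not suffice, the fallback would be the $\alpha=2$ analogue. In that case \eqref{eq:trace-Q} together with \eqref{eq:unimod-nonab}, which gives $\im\tau_2=(\sigma-c)/(\sqrt2\,\delta')$, yields $\|V_2\|^2=c(\sigma-c)/(\delta')^2$. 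One would then compare this with $r_2$ exactly as in Steps 5--6 of Proposition~\ref{prop:generic}. I expect the first contradiction to already be decisive, however.
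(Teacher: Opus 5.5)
Your proposal is correct and is essentially the paper's own proof. In both, the vanishing first column of $E_1$ gives $V_1=0$ via the trace of $Q_{11}=0$ (equivalently \eqref{eq:trace-Q} with $\alpha=1$), and then $R^c_{1\bar11\bar1}=|(E_1)_{12}|^2=c^2/(2\delta'^2)>0$ contradicts $H^c\equiv0$, so your fallback via $V_2$ is not needed.
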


\begin{proof}
By Lemma \ref{lem:curvature-support}, the constant value of $H^c$ is
zero.  In the half-generic case,
\[
a=b=0,\qquad c\neq0,
\]
and hence \eqref{eq:E1-nonab} gives
\[
E_1=
\begin{pmatrix}
0&\dfrac{\I c}{\sqrt2\,\delta'}\\[2mm]
0&0
\end{pmatrix}.
\]
In particular,
\[
(E_1)_{11}=(E_1)_{21}=0.
\]
Putting $\alpha=\beta=1$ in \eqref{eq:Q}, all terms involving the
first column of $E_1$ vanish, and \eqref{eq:Q-zero} becomes
\[
0=Q_{11}=V_1V_1^*+[Y_1,Y_1^*].
\]
Taking the trace gives
\[
0=\tr(V_1V_1^*)+\tr[Y_1,Y_1^*]=\|V_1\|^2,
\]
Thus $V_1=0$. In particular, $v_1^1=0$.  Formula
\eqref{eq:r1-general} and the entries of $E_1$ give
\begin{align*}
R^c_{1\bar111\bar1}
&=|(E_1)_{12}|^2-|(E_1)_{21}|^2
  -2|(E_1)_{11}|^2
  -2\re\{(E_1)_{21}\overline{(E_2)_{11}}\}\\
&=\left|\frac{\I c}{\sqrt2\,\delta'}\right|^2
=\frac{c^2}{2\delta'^2}>0,
\end{align*}
since $c\neq 0$ and $\delta'>0$. This contradicts $H^c=0$.  Hence the half-generic case cannot
occur.
\end{proof}

\subsection{The Chern case: the degenerate type}

Here $a=b=c=0$.

\begin{proposition}\label{prop:degenerate}
Let $(\mathfrak g,J,g)$ be a unimodular Hermitian Lie algebra of
complex dimension $n\geq3$ containing an abelian ideal
$\mathfrak a$ of real codimension two.  Assume that
$J\mathfrak a\neq\mathfrak a$, that
$\mathfrak g/\mathfrak a$ is non-abelian, and that the
complex structure $J$ is of degenerate type, equivalently
$a=b=c=0$.  If the Chern holomorphic sectional curvature $H^c$ is
constant $\ka$, then $\ka=0$ and $R^c=0$.
\end{proposition}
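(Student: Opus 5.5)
The plan is to show that the degenerate condition $a=b=c=0$, combined with $H^c\equiv 0$, forces $D_1=0$ and makes $D_2$ normal with only a few surviving entries. The vanishing of the full curvature then follows from the block formula \eqref{eq:curvature-matrix}. First, Lemma~\ref{lem:curvature-support} gives $\ka=0$, so \eqref{eq:nonab-kappa-zero} holds: $Q_{\alpha\beta}=0$ and $R^c_{\alpha\bar\alpha\alpha\bar\alpha}=0$. Substituting $a=b=c=0$ into \eqref{eq:E1-nonab} gives $E_1=0$. Since $c=0$ and $\delta'>0$, \eqref{eq:E2-nonab} reduces to
\[
E_2=\frac1{\sqrt2}\begin{pmatrix}-\dfrac{\I\sigma}{\delta'}&\dfrac{\I c'+\delta(d'+\sigma)}{\delta'^2}\\[2mm] 0&\dfrac{\I d'}{\delta'}\end{pmatrix}.
\]
In the degenerate case the scalar Jacobi relations \eqref{eq:nonab-Jacobi-scalars} impose nothing on $c',d'$, so these two parameters remain free at this stage.

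\emph{Step 1: $D_1=0$.} Since $E_1=0$, the identity $Q_{11}=0$ from \eqref{eq:Q} reads $V_1V_1^*+[Y_1,Y_1^*]=0$. Taking the trace gives $\|V_1\|^2=0$, so $V_1=0$. Then $[Y_1,Y_1^*]=0$, so $Y_1$ is normal. By \eqref{eq:nonab-Jacobi-Y} and Lemma~\ref{lem:commutator-nilpotent}, $Y_1$ is also nilpotent. A normal nilpotent matrix vanishes, so $Y_1=0$, and hence $D_1=0$.

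\emph{Step 2: killing most of $D_2$.} By \eqref{eq:r2-general}, and because $(E_2)_{21}=0$ and $E_1=0$, we get
\[
0=R^c_{2\bar22\bar2}=-|(E_2)_{12}|^2-2|(E_2)_{22}|^2-\|v_2^2\|^2 .
\]
This is a sum of non-positive terms, so $(E_2)_{12}=(E_2)_{22}=0$ and $v_2^2=0$; in particular $d'=0$ and $c'=0$. With $(E_2)_{22}=0$ and $E_1=0$, the identity $Q_{22}=0$ becomes $V_2V_2^*+[Y_2,Y_2^*]=0$. The same trace argument then gives $V_2=0$ and shows that $Y_2$ is normal. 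Consequently
\[
D_2=\operatorname{diag}\!\Bigl(-\tfrac{\I\sigma}{\sqrt2\,\delta'},\,0,\,Y_2\Bigr)
\]
is normal, and $E_2$ has only its $(1,1)$ entry possibly nonzero. Unimodularity \eqref{eq:unimod-nonab} now only constrains $\im\tau_2$, which causes no conflict.

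\emph{Step 3: $R^c=0$.} By Lemma~\ref{lem:curvature-support}, all components with an index $\ge3$ in the first pair vanish, so it remains to check $\mathcal R_{\alpha\bar\beta}$ via \eqref{eq:curvature-matrix}.
\begin{itemize}
\item $\mathcal R_{1\bar1}=0$ trivially, since $D_1=0$ and $E_1=0$.
\item $\mathcal R_{2\bar2}=[D_2,D_2^*]-\sum_\gamma (E_2)_{\gamma2}D_\gamma^*-\sum_\gamma\overline{(E_2)_{\gamma2}}D_\gamma$. The commutator vanishes because $D_2$ is normal, and the remaining sums vanish because the second column of $E_2$ is zero.
\item $\mathcal R_{1\bar2}$ and $\mathcal R_{2\bar1}$: every term contains $D_1$, $D_1^*$, $E_1$, or the second column of $E_2$. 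The only potentially surviving term is $(E_2)_{11}D_1^{(*)}$, which is zero by Step~1.
\end{itemize}
Hence $R^c=0$.

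The main obstacle is Step~1: one must combine the trace identity with nilpotency from the Jacobi relation \eqref{eq:nonab-Jacobi-Y} to conclude $Y_1=0$. Once $D_1=0$, the sign structure of $R^c_{2\bar22\bar2}$ and the block formula make the rest routine. One should also double-check that the specialization of \eqref{eq:E2-nonab} and the relation $\sigma\neq0$ are consistent with the surviving $(1,1)$ entry; its presence is harmless because it only ever multiplies $D_1=0$.
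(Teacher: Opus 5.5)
Your proof is correct. Step~1 ($V_1=0$, then $Y_1$ is normal and nilpotent, hence $D_1=0$) is exactly the paper's argument; after that your route differs.

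\textbf{The paper's route.} It never looks at $E_2$, $V_2$ or $Y_2$ again. From $D_1=E_1=0$ and $(E_2)_{21}=0$ it reads off $\mathcal R_{1\bar1}=\mathcal R_{1\bar2}=\mathcal R_{2\bar1}=0$. Lemma~\ref{lem:curvature-support} then gives $R^c_{X\bar XX\bar X}=|X^2|^2\,q(X)$, where $q(X)=\sum_{k,\ell}R^c_{2\bar2k\bar\ell}X^k\overline{X^\ell}$. Since $H^c=0$, $q$ vanishes on the dense set $\{X^2\neq0\}$, hence everywhere. Sesquilinear polarization then gives $\mathcal R_{2\bar2}=0$.

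\textbf{Your route.} You use more of the hypothesis. First, $R^c_{2\bar22\bar2}=0$ is a sum of non-positive terms once $E_1=0=(E_2)_{21}$. Then $Q_{22}=0$ kills the second column of $E_2$ and $V_2$, and makes $Y_2$ normal. After that, $\mathcal R_{2\bar2}=[D_2,D_2^*]=0$ is immediate.

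\textbf{Comparison.} The paper's soft argument is shorter and needs no further entries of $E_2$. Yours is more computational, but it gives an explicit normal form for every Chern-flat example of degenerate type, which the paper does not extract. Namely: $c'=d'=0$, $V_1=V_2=0$, $Y_1=0$, and $Y_2$ normal. You even get $\delta=0$, since $\operatorname{Re}(E_2)_{12}=\delta\sigma/(\sqrt2\,\delta'^2)$ must vanish and $\sigma\neq0$.

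\textbf{Two attributions to fix.}
\begin{itemize}
\item Reducing $Q_{22}$ to $V_2V_2^*+[Y_2,Y_2^*]$ uses $(E_2)_{12}=(E_2)_{22}=0$ (or $Y_1=0$), not $E_1=0$.
\item In $\mathcal R_{1\bar2}$, the term $-\overline{(E_2)_{21}}D_2$ is killed by $(E_2)_{21}=0$, which lies in the first column of $E_2$, not the second. Your conclusion that only $(E_2)_{11}D_1^{(*)}$ could survive is still right, because you have shown $E_2=\operatorname{diag}((E_2)_{11},0)$.
\end{itemize}
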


\begin{proof}
By Lemma \ref{lem:curvature-support}, $\kappa =0$. In the degenerate case $a=b=c=0$, \eqref{eq:E2-nonab} gives $(E_2)_{21}=0$. Moreover, formula \eqref{eq:E1-nonab} gives $E_1=0$.  Hence \eqref{eq:Q-zero} with $\alpha=\beta=1$ becomes
\[
0=Q_{11}=V_1V_1^*+[Y_1,Y_1^*].
\]
Taking the trace yields $\|V_1\|^2=0$, and therefore $V_1=0$.
Substitution back gives $[Y_1,Y_1^*]=0$, so $Y_1$ is normal.  By Lemma \ref{lem:commutator-nilpotent} , $Y_1$ is also nilpotent.  Thus $Y_1=0$ and $D_1=0$.

Formula \eqref{eq:curvature-matrix} now gives
\begin{equation}\label{R2}
\mathcal R_{1\bar1}=\mathcal R_{1\bar2}
=\mathcal R_{2\bar1}=0.
\end{equation}
Indeed, $D_1=E_1=0$, and in the mixed cases the only remaining scalar
coefficient is $(E_2)_{21}=0$ or its conjugate.

Let $X=z_1e_1+z_2e_2+w=\sum_{k=1}^nX^ke_k$, where
$w\in\mathcal W:=(\mathfrak a_J)^{1,0}
=\operatorname{span}_{\C}\{e_3,\ldots,e_n\}$. By Lemma \ref{lem:curvature-support} and \eqref{R2}, we have
\begin{equation}\label{eq:degenerate-quartic}
R^c_{X\bar X X\bar X}
=|z_2|^2\sum_{k,\ell=1}^n
R^c_{2\bar2k\bar\ell}X^k\overline{X^\ell}.
\end{equation}
Since $H^c=0$, the left hand side vanishes for every $X$.  Define
\[
q(X)=\sum_{k,\ell=1}^n
R^c_{2\bar2k\bar\ell}X^k\overline{X^\ell}.
\]
For $z_2\neq0$, equation \eqref{eq:degenerate-quartic} gives $q(X)=0$.
The set $\{z_2\neq0\}$ is dense and $q$ is continuous, hence
$q\equiv0$.  Explicitly, for a vector $X_0$ with $z_2=0$, the sequence
$X_m=X_0+m^{-1}e_2$ lies in $\{z_2\neq0\}$ and converges to $X_0$;
therefore $q(X_0)=\lim_mq(X_m)=0$.

Let
$
B(U,V)=\sum_{k,\ell=1}^n
R^c_{2\bar2k\bar\ell}U^k\overline{V^\ell}.
$
Then $q(X)=B(X,X)$.  The Hermitian polarization identity is
\begin{align*}
B(U,V)=\frac14\{&q(U+V)-q(U-V)+\I q(U+\I V)-\I q(U-\I V)\}.
\end{align*}
Since $q\equiv0$, it follows that $B\equiv0$.  Taking $U=e_k$ and
$V=e_\ell$ yields $R^c_{2\bar2k\bar\ell}=0$ for all $k,\ell$, so
$\mathcal R_{2\bar2}=0$.  Together with \eqref{R2} and Lemma~\ref{lem:curvature-support}, this gives $R^c=0$.
\end{proof}

\subsection{Canonical connections with $\chi>0$}

Recall that for $(r,s)\in \Omega$, $\chi$ is defined by (\ref{eq:canonical-sym-identity}) as
$$ \chi
=
t^2+\frac{s^2}{4}
=
\frac{(1-r+rs)^2+s^2}{4}.$$
So $\chi \geq 0$, and $\chi =0$ only when $(r,s)=(1,0)$, or equivalently when the connection is $D^1_0=\nabla^c$.
Throughout this subsection we assume $\chi>0$ and use the torsion formulas
\eqref{eq:nonab-torsion-1}--\eqref{eq:nonab-u}.  The following
statement is the starting point for both the zero and negative
constant cases.

\begin{proposition}\label{prop:nonab-sign-reduction}
Let $(\mathfrak g,J,g)$ be a unimodular Hermitian Lie algebra of
complex dimension $n\geq3$ containing an abelian ideal
$\mathfrak a$ of real codimension two.  Assume that
$J\mathfrak a\neq\mathfrak a$,
$\mathfrak g/\mathfrak a$ is non-abelian, and $\chi>0$. If
$H^{D_s^r}=\ka$ is constant, then for every unit vector
$w\in\mathcal W=(\mathfrak a_J)^{1,0}$,
\begin{equation}\label{eq:nonab-W-reduction}
\ka=-\chi\left(
|\langle Z_1w,\overline w\rangle|^2
+|\langle Z_2w,\overline w\rangle|^2\right).
\end{equation}
Consequently, $\ka\leq0$.  If $\ka=0$, then $Z_1=Z_2=0$.
\end{proposition}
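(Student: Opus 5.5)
The plan is to evaluate the master equation \eqref{eq:master-equation} on the diagonal entries indexed by vectors $w\in\mathcal W$, and to compute the two sides separately. By polarization, $H^{D^r_s}=\ka$ means that for every unit vector $X\in\mathfrak g^{1,0}$,
\[
\widehat R^c_{X\bar XX\bar X}-\chi\,\widehat v_{X\bar XX\bar X}=\ka .
\]
Since the symmetrization does not change the diagonal quartic form, this reads $R^c_{X\bar XX\bar X}-\chi\,\widehat v_{X\bar XX\bar X}=\ka$.

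\emph{Curvature term.} Take $X=w$ a unit vector in $\mathcal W$. By Lemma~\ref{lem:curvature-support}, $R^c_{p\bar qk\bar\ell}=0$ whenever $p\geq3$. Expanding $R^c_{w\bar ww\bar w}$ in the frame, every term has first index $p\geq3$, so the term vanishes. (We may first rotate the unitary basis of $\mathcal W$ so that $w=e_3$; this preserves the block form \eqref{eq:D-block}--\eqref{eq:D-p-zero}.)

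\emph{Torsion term.} By the definition \eqref{eq:vhat-definition}, $\widehat v_{w\bar ww\bar w}=\sum_a|T(w,e_a)^{w}|^2$, where the superscript denotes the $w$-component $\langle T(w,e_a),\overline w\rangle$. This is the unit-vector form of $\widehat v_{i\bar ii\bar i}=\sum_a|T^i_{ia}|^2$, and it is basis invariant because it is quadratic in $T$. The torsion formulas give the three possible contributions:
\begin{itemize}
\item For $a\geq3$, \eqref{eq:nonab-torsion-W} gives $T(w,e_a)=0$.
\item For $a=\alpha\in\{1,2\}$, we have $T(w,e_\alpha)=-T(e_\alpha,w)$. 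Its $\mathcal W$-component is given by \eqref{eq:nonab-torsion-1}--\eqref{eq:nonab-torsion-2}: $T^q_{\alpha p}=(Z_\alpha)_{pq}$.
\item Hence $\langle T(e_\alpha,w),\overline w\rangle=\sum_{p,q}w^p(Z_\alpha)_{pq}\overline{w^q}$. Up to the transpose convention this is $\langle Z_\alpha w,\overline w\rangle$ (or its conjugate-transpose variant). Only its modulus enters, so we may write it in the form of the statement.
\end{itemize}
Therefore
\[
\widehat v_{w\bar ww\bar w}=|\langle Z_1w,\overline w\rangle|^2+|\langle Z_2w,\overline w\rangle|^2 .
\]
Substituting into the master equation gives \eqref{eq:nonab-W-reduction}. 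Since $\chi>0$, it follows immediately that $\ka\leq0$.

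\emph{The case $\ka=0$.} Then $\langle Z_\alpha w,\overline w\rangle=0$ for every $w\in\mathcal W$, for both $\alpha=1,2$. The numerical range of a complex matrix vanishes identically only when the matrix is zero. To see this, write $Z=A+\I B$ with $A,B$ Hermitian. Then $\langle Aw,\overline w\rangle$ and $\langle Bw,\overline w\rangle$ are real and both vanish, so $A=B=0$ by Hermitian polarization. Alternatively, one can invoke Lemma~\ref{lem:circle-rigidity} with $c_0=0$. Hence $Z_1=Z_2=0$.

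\emph{Main obstacle.} The only delicate point is bookkeeping: matching the index convention of $\widehat v$ with the formula $T^q_{\alpha p}=(Z_\alpha)_{pq}$, and checking that the $\mathcal U$-components of $T(e_\alpha,w)$, namely the $v^\beta_\alpha$ entries, do not enter $\widehat v_{w\bar ww\bar w}$. They do not, because only the $w$-component of $T(w,e_a)$ appears there.
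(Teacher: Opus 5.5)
Your proposal is correct and follows essentially the same route as the paper's proof. The paper also rotates so that $w=e_3$, kills the curvature term by Lemma~\ref{lem:curvature-support}, reduces $\widehat v_{w\bar ww\bar w}$ to the $a=1,2$ summands via the torsion formulas, and settles $\ka=0$ by polarization; your Hermitian-part variant is equivalent to the paper's sesquilinear polarization.

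One small correction: the hedge about conventions is unnecessary, and its justification is wrong. Under the paper's convention the $\mathcal W$-component of $T(e_\alpha,e_p)$ is $\sum_q(Z_\alpha)_{pq}e_q$, so your expression $\sum_{p,q}w^p(Z_\alpha)_{pq}\overline{w^q}$ is exactly $\langle Z_\alpha w,\overline w\rangle$. Conversely, ``only its modulus enters'' would not cover a transpose, since $|\langle Z^{\mathsf T}w,\overline w\rangle|\neq|\langle Zw,\overline w\rangle|$ in general; only $Z^*$ gives the same modulus.
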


\begin{proof}
Fix a unit vector $w\in\mathcal W$.  Choose a unitary basis
$\{e_3,\ldots,e_n\}$ of $\mathcal W$ such that $e_3=w$, and keep
$e_1,e_2$ unchanged.  A unitary change of basis inside
$\mathcal W$ preserves the block form of the matrices $D_\alpha$,
the identities $D_p=0$ for $p\geq3$, and the torsion formulas
\eqref{eq:nonab-torsion-1}--\eqref{eq:nonab-torsion-3}.

Taking all four indices equal to $3$ in the definition
\eqref{eq:vhat-definition} gives
\begin{equation}\label{eq:nonab-vhat-diagonal}
\widehat v_{3\bar3 3\bar3}
=\sum_{a=1}^n|T^3_{3a}|^2
=\sum_{a=1}^n
\left|\left\langle T(w,e_a),\overline w\right\rangle\right|^2.
\end{equation}
For $a\geq3$, (\ref{eq:nonab-torsion-W}) implies
$T^q_{pa}=0$.  Hence only the terms $a=1,2$ remain in
\eqref{eq:nonab-vhat-diagonal}.  By the skew-symmetry of the Chern
torsion and \eqref{eq:nonab-torsion-1}--
\eqref{eq:nonab-torsion-2},
\[
\begin{aligned}
\left\langle T(w,e_1),\overline w\right\rangle
&=-\left\langle T(e_1,w),\overline w\right\rangle
=-\left\langle Z_1w,\overline w\right\rangle,\\
\left\langle T(w,e_2),\overline w\right\rangle
&=-\left\langle T(e_2,w),\overline w\right\rangle
=-\left\langle Z_2w,\overline w\right\rangle.
\end{aligned}
\]
Consequently,
\begin{equation}\label{eq:nonab-vhat-Z}
\widehat v_{w\bar w w\bar w}
=|\langle Z_1w,\overline w\rangle|^2
+|\langle Z_2w,\overline w\rangle|^2.
\end{equation}

By Lemma~\ref{lem:curvature-support},
$R^c_{w\bar w w\bar w}=0$.  Taking all four arguments equal to $w$
in \eqref{eq:master-equation}, and using
\eqref{eq:nonab-vhat-Z}, proves \eqref{eq:nonab-W-reduction}.
Since $\chi>0$, the right-hand side is non-positive.

If $\ka=0$, then
\[
\langle Z_\alpha w,\overline w\rangle=0,
\qquad \alpha=1,2,
\]
for every unit vector $w\in\mathcal W$, and hence for every
$w\in\mathcal W$ by homogeneity.  For each $\alpha\in\{1,2\}$,
define
$$B_\alpha(u,v):=\langle Z_\alpha u,\overline v\rangle,\qquad u,v\in\mathcal W.$$
Then $B_\alpha$ is sesquilinear and
$B_\alpha(w,w)=0$ for all $w\in\mathcal W$.  The complex
polarization identity implies that $B_\alpha\equiv0$, and therefore
$Z_\alpha=0$.  Thus $Z_1=Z_2=0.$
\end{proof}

\subsection{The $H^{D_s^r}=0$ case}

\begin{proposition}\label{prop:nonab-zero-canonical}
Let $(\mathfrak g,J,g)$ be a unimodular Hermitian Lie algebra of
complex dimension $n\geq3$ containing an abelian ideal
$\mathfrak a$ of real codimension two.  Assume that
$J\mathfrak a\neq\mathfrak a$ and that
$\mathfrak g/\mathfrak a$ is non-abelian.  Let $D_s^r$ be a
canonical metric connection with $\chi>0$.  Then
$H^{D_s^r}$ cannot vanish identically.
\end{proposition}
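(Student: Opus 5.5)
The plan is to argue by contradiction. Suppose $H^{D_s^r}\equiv0$ with $\chi>0$.

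\emph{Step 1: consequences of $Z_1=Z_2=0$.} Proposition~\ref{prop:nonab-sign-reduction} gives $Z_1=Z_2=0$. From $Z_1=Y_1-Y_1^*=0$, $Y_1$ is Hermitian. It is also nilpotent by Lemma~\ref{lem:commutator-nilpotent} and \eqref{eq:nonab-Jacobi-Y}, so $Y_1=0$. Then $Z_2=0$ reduces to $Y_2=Y_2^*$, hence $\tau_2=\tr(Y_2)$ is real. Inserting this into the unimodularity criterion \eqref{eq:unimod-nonab} gives
\[
2\sigma=d'+c .
\]
This already kills the generic type, because there $d'=-c-2\sigma$, which would force $\sigma=0$, contrary to $\sigma\neq0$. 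In the half-generic type ($d'=c$) it yields $c=\sigma$. In the degenerate type ($c=0$) it yields $d'=2\sigma$.

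\emph{Step 2: the diagonal $e_1$-equation.} For the two remaining types I use \eqref{eq:master-equation} at $e_1$, namely
\[
R^c_{1\bar11\bar1}=\chi\,\widehat v_{1\bar11\bar1}=\chi\sum_a|T^1_{1a}|^2 ,
\]
together with \eqref{eq:r1-general}, $R^c_{1\bar11\bar1}=r_1-\|v_1^1\|^2$.

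\emph{Degenerate type.} Here $E_1=0$ and $(E_2)_{21}=0$, so $r_1=0$. The equation becomes $-\|v_1^1\|^2=\chi\sum_a|T^1_{1a}|^2$, which forces $T^1_{1a}=0$ for all $a$. However, by \eqref{torsion}, \eqref{eq:nonab-C12} and \eqref{eq:E2-nonab},
\[
T^1_{12}=-C^1_{12}-D^1_{12}+D^1_{21}=-C^1_{12}-(E_2)_{11}+(E_1)_{21}.
\]
With $c=0$ this equals $\frac{\I\sigma}{\sqrt2\,\delta'}+\frac{\I\sigma}{\sqrt2\,\delta'}\neq0$, a contradiction.

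\emph{Half-generic type.} Here $E_1$ has only the entry $(E_1)_{12}=\frac{\I c}{\sqrt2\,\delta'}$, so $r_1=\frac{c^2}{2\delta'^2}$. I will control $\|v_1^1\|^2$ using the mixed components $\alpha=\beta=1$, $p,q\ge3$ of \eqref{eq:master-equation}. Since $R^c_{p\bar q\ast\bar\ast}=0$ by Lemma~\ref{lem:curvature-support}, these read $\frac14 Q_{11}=\chi\,\widehat v_{1\bar1p\bar q}$. Using $Y_1=0$ and $(E_1)_{11}=(E_1)_{21}=0$, one gets $Q_{11}=V_1V_1^*$. From \eqref{eq:nonab-torsion-3}, \eqref{eq:nonab-u} and $Z_\alpha=0$, only the terms $T^1_{pa}$ ($a=1,2$) and $T^q_{12}=u_q$ survive in $\widehat v$, giving the matrix identity
\[
V_1V_1^*=\chi\bigl(v_1^1v_1^{1*}+v_2^1v_2^{1*}+uu^*\bigr),
\]
with $u=-\overline{v_1^2}+\overline{v_2^1}-2\zeta\overline{v_1^1}$. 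I then combine its trace with the $e_1$-diagonal equation. On the torsion side, $T^1_{12}=-C^1_{12}-(E_2)_{11}$ evaluates with $c=\sigma$ to a nonzero multiple of $\sigma/\delta'$. This should give an inequality between $c^2/\delta'^2$ and $\chi\sigma^2/\delta'^2$. I expect that inequality, together with the analogous $e_2$-diagonal equation (\eqref{eq:r2-general} with $d'=c=\sigma$), and possibly the block Jacobi identity \eqref{eq:nonab-block-Jacobi} with $Y_1=0$ to relate $V_1$ and $V_2$, to be incompatible for every $\chi>0$.

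The main obstacle is this last half-generic step. There, $r_1>0$ and $\chi$ is an arbitrary positive parameter, so the contradiction must come from combining several component equations rather than from a single sign argument. I would first try to show that the trace identity above forces $v_1^1$ to be small relative to $u$ and $v_2^1$, and then feed this into the $e_1$ and $e_2$ diagonal equations.
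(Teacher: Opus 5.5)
Your generic and degenerate cases match the paper's argument: after $Y_1=0$ and $Y_2=Y_2^*$, the trace of the unimodularity identity kills the generic type. In the degenerate type, the $e_1$-diagonal equation with $r_1=0$ contradicts $T^1_{12}=\sqrt2\,\I\sigma/\delta'\neq0$.

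The gap is the half-generic type, which you leave open. The route you sketch cannot close it on its own. With $c=\sigma$ one has $r_1=\sigma^2/(2(\delta')^2)$ and $T^1_{12}=\I\sigma/(\sqrt2\,\delta')$. The $e_1$-diagonal equation $r_1-\|v_1^1\|^2=\chi\bigl(|T^1_{12}|^2+\|v_1^1\|^2\bigr)$ therefore becomes $(1+\chi)\|v_1^1\|^2=(1-\chi)\sigma^2/(2(\delta')^2)$. This is perfectly consistent for $\chi\le1$. The inequality you anticipate between $c^2$ and $\chi\sigma^2$ is just $\chi\le1$, not a contradiction. Your proposal then only hopes that extra component equations will produce incompatibility, without carrying this out. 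A minor point: the $u$-term in your $Q_{11}$ identity has entries $\overline{u_p}\,u_q$, so it should read $\overline{u}\,\overline{u}^{*}$ rather than $uu^*$. This does not affect the trace.

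The missing observation is that in the half-generic type the favourable sign appears at $e_2$, not $e_1$. Since $a=b=0$, you have $(E_2)_{21}=(E_1)_{22}=0$. Since $d'=c$, you have $(E_2)_{22}=\I c/(\sqrt2\,\delta')\neq0$. Hence \eqref{eq:r2-general} gives $r_2=-|(E_2)_{12}|^2-c^2/(\delta')^2<0$, and so $R^c_{2\bar22\bar2}=r_2-\|v_2^2\|^2<0$. On the other hand, the $(2,\bar2,2,\bar2)$ component of \eqref{eq:master-equation} with $\ka=0$ gives $R^c_{2\bar22\bar2}=\chi\sum_a|T^2_{2a}|^2\geq0$. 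This single sign argument is the paper's proof of the half-generic case. It needs neither $c=\sigma$, nor the mixed $Q_{11}$ identity, nor the block Jacobi relation.
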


\begin{proof}
Assume $H^{D_s^r}=0$.  Proposition~\ref{prop:nonab-sign-reduction}
gives $Z_1=Z_2=0$.  Thus $Y_1=Y_1^*$; since
$[Y_1,Y_2]=\lambda Y_1$ with $\lambda\neq0$, by Lemma \ref{lem:commutator-nilpotent}, the matrix $Y_1$ is
nilpotent, and hence $Y_1=0$.  The identity $Z_2=0$ now gives
$Y_2=Y_2^*$ by \eqref{eq:nonab-Z-definitions}.  Taking the trace in
\eqref{eq:unimod-nonab} therefore yields
$2\sigma-d'-c=0$.

\medskip
\noindent\emph{The generic case.}
In this case, \eqref{eq:nonab-three-types} gives $d'=-c-2\sigma$, hence $0=2\sigma-d'-c=4\sigma$, contrary to $\sigma\neq0$.

\medskip
\noindent\emph{The half-generic case.}
In this case, \eqref{eq:nonab-three-types} gives $a=b=0$, $c\neq0$ and $d'=c$.
The entries in \eqref{eq:E2-nonab} satisfy
$(E_2)_{21}=(E_1)_{22}=0$ and
\begin{equation*}
(E_2)_{12}=\frac{\I c'+\delta(c+\sigma)}
{\sqrt2(\delta')^2},
\qquad
(E_2)_{22}=\frac{\I c}{\sqrt2\,\delta'}.
\end{equation*}
Consequently, by \eqref{eq:r2-general}, $r_2=-|(E_2)_{12}|^2-c^2/(\delta')^2<0$ because $c\neq0$ and
$\delta'>0$, and hence
\begin{equation}\label{eq:half-generic-R2222-negative}
R^c_{2\bar2 2\bar2}=r_2-\|v_2^2\|^2<0
\end{equation}
by \eqref{eq:r2-general}. On the other hand, the $D_s^r$-holomorphic sectional curvature is
assumed to vanish.  Taking $i=j=k=\ell=2$ in
\eqref{eq:master-equation}, and using $ \widehat R^c_{2\bar2 2\bar2}=R^c_{2\bar2 2\bar2},\,\widehat v_{2\bar2 2\bar2}=\sum_{a=1}^n|T^2_{2a}|^2,$ we obtain
\[
R^c_{2\bar2 2\bar2}
=
\chi\sum_{a=1}^n|T^2_{2a}|^2
\geq0,
\]
because $\chi>0$.  This contradicts
\eqref{eq:half-generic-R2222-negative}.  Hence the half-generic
case is impossible.

\medskip
\noindent\emph{The degenerate case.}
In this case, \eqref{eq:nonab-three-types} gives $a=b=c=0.$ Substituting these identities into \eqref{eq:E1-nonab}, we obtain $E_1=0.$ In particular, $(E_1)_{11}=(E_1)_{12}=(E_1)_{21}=0.$ It follows from the definition of $r_1$ in \eqref{eq:r1-general} that $r_1=0.$ Therefore, by \eqref{eq:r1-general}, \begin{equation}\label{eq:degenerate-R1111-nonpositive}
R^c_{1\bar1 1\bar1}
=
-\|v_1^1\|^2
\leq0.
\end{equation}
We next compute the torsion component $T^1_{12}$.  By
\eqref{torsion}, $T^1_{12}
=
-C^1_{12}-D^1_{12}+D^1_{21}.$ The identity \eqref{eq:nonab-C12} gives $C^1_{12}=-\frac{\I\sigma}{\sqrt2\,\delta'}.$ Moreover, from \eqref{eq:E2-nonab} and \eqref{eq:E1-nonab},
\[
D^1_{12}
=
(E_2)_{11}
=
\frac{\I(c-\sigma-b\delta^2)}
{\sqrt2\,\delta'},
\qquad
D^1_{21}
=
(E_1)_{21}
=
\frac{\I b\delta'}{\sqrt2}.
\]
Consequently,
\begin{align}
T^1_{12}
&=
\frac{\I\sigma}{\sqrt2\,\delta'}
-\frac{\I(c-\sigma-b\delta^2)}
{\sqrt2\,\delta'}
+\frac{\I b\delta'}{\sqrt2}\notag=
\frac{\I(b+2\sigma-c)}
{\sqrt2\,\delta'}.
\label{eq:nonab-mu1}
\end{align}
Here we used $(\delta')^2=1-\delta^2$.  Since $b=c=0$ in the
degenerate case, $$T^1_{12}=\frac{\sqrt2\,\I\sigma}{\delta'}\neq0,$$ because $\sigma\neq0$ and $\delta'>0$. On the other hand, the holomorphic sectional curvature of
$D_s^r$ is assumed to vanish identically.  Taking
$i=j=k=\ell=1$ in \eqref{eq:master-equation}, and using $\widehat v_{1\bar1 1\bar1}=\sum_{a=1}^n|T^1_{1a}|^2,$
we obtain $R^c_{1\bar1 1\bar1}=\chi\sum_{a=1}^n|T^1_{1a}|^2.$ Since $\chi>0$ and $T^1_{12}=T^1_{12}\neq0$, it follows that
\begin{equation}\label{eq:degenerate-R1111-positive}
R^c_{1\bar1 1\bar1}
\geq
\chi|T^1_{12}|^2
>0.
\end{equation}
This contradicts
\eqref{eq:degenerate-R1111-nonpositive}.  Hence the degenerate case
is impossible.

Together with the generic and half-generic cases considered above, this excludes all three non-abelian types.
\end{proof}

\subsection{The negative constant case}

\begin{lemma}\label{lem:nonab-negative-normal-form}
Let $(\mathfrak g,J,g)$ be a unimodular Hermitian Lie algebra of
complex dimension $n\geq3$ containing an abelian ideal
$\mathfrak a$ of real codimension two.  Assume that
$J\mathfrak a\neq\mathfrak a$ and that
$\mathfrak g/\mathfrak a$ is non-abelian. Assume $\chi>0$ and
$H^{D_s^r}=\ka<0$, and put $\mathcal W:=(\mathfrak a_J)^{1,0}
=\operatorname{span}_{\C}\{e_3,\ldots,e_n\}$ and
$m:=\dim_{\C}\mathcal W=n-2$.  Then there is
$\beta\in\R\setminus\{0\}$ such that
\begin{equation}\label{eq:nonab-negative-normal-form}
Z_1=0,
\qquad Z_2=2\I\beta I_m,
\qquad \ka=-4\chi\beta^2,
\end{equation}
and
\begin{equation}\label{eq:nonab-beta}
Y_1=0,
\qquad
\beta=\frac{2\sigma-d'-c}{2\sqrt2\,m\delta'}.
\end{equation}
\end{lemma}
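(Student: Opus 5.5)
Starting from Proposition~\ref{prop:nonab-sign-reduction}: for every unit vector $w\in\mathcal W$,
\[
|\langle Z_1w,\overline w\rangle|^2+|\langle Z_2w,\overline w\rangle|^2=-\ka/\chi=:c_0>0 .
\]
To apply Lemma~\ref{lem:circle-rigidity}, which is phrased for Hermitian matrices, I would first identify the algebraic type of $Z_1,Z_2$. By \eqref{eq:nonab-Z-definitions}, $Z_1=Y_1-Y_1^*$ is skew-Hermitian, so $A_1:=-\I Z_1$ is Hermitian and $\langle Z_1w,\overline w\rangle=\I\langle A_1w,\overline w\rangle$ has modulus $|\langle A_1 w,\overline w\rangle|$ with $\langle A_1w,\overline w\rangle$ real. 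The matrix $Z_2=Y_2-Y_2^*+2\zeta Y_1^*$ is not a priori skew-Hermitian, because of the term $2\zeta Y_1^*$. So I would first show $Y_1=0$, or else work with Hermitian and skew parts directly.

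Plan for $Y_1=0$: split $Z_2=S+K$ with $S=\zeta Y_1^*+\overline{\zeta}Y_1$ Hermitian ($\zeta$ is purely imaginary, so $\overline\zeta=-\zeta$) and $K$ skew-Hermitian. Then $|\langle Z_2w,\overline w\rangle|^2=\langle Sw,\overline w\rangle^2+\langle -\I Kw,\overline w\rangle^2$. Hence the hypothesis says that a sum of three squares of Hermitian quadratic forms is constant on the unit sphere. Lemma~\ref{lem:circle-rigidity} is stated for two forms, but its proof extends verbatim to any finite number of Hermitian matrices. The off-diagonal step only uses a single $\theta$-dependent term after simultaneous diagonalization of one of them; I would redo it inductively, showing all pairwise off-diagonal entries vanish in a basis diagonalizing one matrix. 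That argument needs care with several varying terms, so instead I would prove the $k$-form version by noting that $\sum_j\langle A_jw,\overline w\rangle^2$ is constant. Restricting to the complex line spanned by $e_p,e_q$ gives a quadratic map of the Bloch sphere $S^2$ into $\R^k$ that is affine in the Bloch vector, with constant norm. An affine map of $S^2$ with constant norm must be constant, since the image is an ellipsoid inside a sphere and antipodal points average to the center. So each $A_j$ is a scalar on every such $2$-plane, hence scalar.

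It follows that $Y_1-Y_1^*$, $\zeta Y_1^*+\overline\zeta Y_1$ and the skew part $K$ are all scalar. Then $\tr Y_1=0$ (by \eqref{eq:nonab-trY1}) together with $\zeta\ne 0$ when $\delta\neq0$ forces $Y_1$ to be a scalar with zero trace, i.e.\ $Y_1=0$. If $\delta=0$ then $\zeta=0$: $Y_1-Y_1^*$ is scalar and trace-free, so $Y_1$ is Hermitian; being nilpotent by Lemma~\ref{lem:commutator-nilpotent}, $Y_1=0$. In either case $Z_1=0$, and $Z_2=Y_2-Y_2^*$ is a scalar skew-Hermitian matrix, $Z_2=2\I\beta I_m$ with $\beta\in\R$.

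Then $\ka=-\chi|2\I\beta|^2=-4\chi\beta^2$, and $\ka<0$ gives $\beta\ne0$. Finally, take the trace: $\tr Z_2=\tau_2-\overline{\tau_2}=2\I m\beta$. Comparing with \eqref{eq:unimod-nonab} gives
\[
2\I m\beta=\frac{\I}{\sqrt2\,\delta'}(2\sigma-d'-c),
\]
which is the formula \eqref{eq:nonab-beta} for $\beta$. The main obstacle is the multi-form version of the circle-rigidity lemma. I would handle it through the Bloch-sphere argument on each $2$-plane $\operatorname{span}\{e_p,e_q\}$, which also recovers the diagonal-entry equality.
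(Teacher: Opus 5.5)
There is a genuine gap at the step you identify as the main obstacle: the version of Lemma~\ref{lem:circle-rigidity} for three or more Hermitian forms is false. Take $m=2$ and the Pauli matrices $\sigma_1,\sigma_2,\sigma_3$. For a unit vector $w=(w_1,w_2)$ one has $\sum_{j}\langle\sigma_jw,\overline w\rangle^2=(|w_1|^2-|w_2|^2)^2+4|w_1|^2|w_2|^2=1$, yet no $\sigma_j$ is scalar.

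Your Bloch-sphere argument breaks here. If $F(n)=a+Bn$ has constant norm on $S^2$, comparing $n$ with $-n$ gives only $a\perp B(S^2)$ and $B^{\mathsf T}B=\mu I_3$. For target $\R^2$ the rank of $B^{\mathsf T}B$ is at most $2$, which forces $\mu=0$; this is why the two-form lemma holds. In $\R^3$, however, the inclusion $S^2\hookrightarrow\R^3$ is affine, has constant norm, and is not constant. The $\theta$-step in the proof of Lemma~\ref{lem:circle-rigidity} does not extend verbatim either. After diagonalizing one matrix, a sum of two varying squares can still be constant; diagonalizing $\sigma_3$ in the Pauli example produces exactly $\cos^2\theta+\sin^2\theta$.

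What you missed is that your third form is not independent. Since $\overline\zeta=-\zeta$, the Hermitian part of $Z_2$ is $S=\zeta Y_1^*+\overline\zeta Y_1=-\zeta Z_1=\frac{\delta}{\delta'}A_1$, where $A_1=-\I Z_1$. The hypothesis therefore becomes $\frac{1}{(\delta')^2}\langle A_1w,\overline w\rangle^2+\langle -\I Kw,\overline w\rangle^2=c_0$, a sum of two squares. Lemma~\ref{lem:circle-rigidity} then applies as stated to the Hermitian matrices $A_1/\delta'$ and $-\I K$. This is exactly the paper's route: writing $Y_\alpha=H_\alpha+\I S_\alpha$ gives $Z_1=2\I S_1$ and $Z_2=2\frac{\delta}{\delta'}S_1+2\I\bigl(S_2+\frac{\delta}{\delta'}H_1\bigr)$.

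The same observation invalidates your $\delta\neq0$ branch. Scalarity of $Y_1-Y_1^*$ and scalarity of $S$ are one and the same condition on the skew-Hermitian part of $Y_1$. They say nothing about $H_1$, so $Y_1$ need not be scalar. Use your $\delta=0$ argument for every $\delta$ instead: $Y_1-Y_1^*$ is scalar and trace-free because $\tau_1=0$, so $Y_1$ is Hermitian; it is also nilpotent, hence $Y_1=0$. With these two repairs, your remaining steps are correct: the value of $\ka$, $\beta\neq0$, and the trace comparison with \eqref{eq:unimod-nonab}.
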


\begin{proof}
Write $Y_1=H_1+\I S_1,\, Y_2=H_2+\I S_2,$ where $H_\alpha=\frac12(Y_\alpha+Y_\alpha^*),\,S_\alpha=\frac1{2\I}(Y_\alpha-Y_\alpha^*)$
are Hermitian matrices.  Since $\zeta=\I \frac{\delta}{\delta'}$, the definitions of
$Z_1$ and $Z_2$ give
\[
Z_1
=
Y_1-Y_1^*
=
2\I S_1,\quad Z_2=Y_2-Y_2^*+2\zeta Y_1^*=2\frac{\delta}{\delta'}S_1+2\I(S_2+\frac{\delta}{\delta'}H_1).
\]

For every unit vector $w\in\mathcal W$, the matrices $S_1$ and
$S_2+\frac{\delta}{\delta'}H_1$ are Hermitian, and hence $\langle S_1w,\overline w\rangle\in\R,\,\langle(S_2+\frac{\delta}{\delta'}H_1)w,\overline w\rangle\in\R.$ Therefore
\[
\left|
\langle Z_1w,\overline w\rangle
\right|^2
=
4\langle S_1w,\overline w\rangle^2,
\quad \left|
\langle Z_2w,\overline w\rangle
\right|^2
={}
4\frac{\delta^2}{\delta'^2}\langle S_1w,\overline w\rangle^2
+
4\langle(S_2+\frac{\delta}{\delta'}H_1)w,\overline w\rangle^2.
\]
Since
\[
1+\frac{\delta^2}{(\delta')^2}
=
\frac1{(\delta')^2},
\]
equation \eqref{eq:nonab-W-reduction} becomes
\begin{equation}\label{eq:nonab-circle}
\left\langle
\frac{S_1}{\delta'}w,\overline w
\right\rangle^2
+
\left\langle
(S_2+\frac{\delta}{\delta'}H_1)w,\overline w
\right\rangle^2
=
-\frac{\ka}{4\chi}
\end{equation}
for every unit vector $w\in\mathcal W$.

Apply Lemma~\ref{lem:circle-rigidity} to the Hermitian matrices $A:=\frac{S_1}{\delta'},\,B:=S_2+\frac{\delta}{\delta'}H_1.$
There exist $\alpha_0,\beta\in\R$ such that
\begin{equation}\label{eq:nonab-S-scalar}
\frac{S_1}{\delta'}
=
\alpha_0I_m,
\qquad
S_2+\frac{\delta}{\delta'}H_1
=
\beta I_m.
\end{equation}
Recall that
\[
[Y_1,Y_2]=\lambda Y_1,
\qquad
\lambda=\frac{\I\sigma}{\sqrt2\,\delta'}\neq0.
\]
Taking traces gives $0=\tr[Y_1,Y_2]=\lambda\tr(Y_1),$ and hence
$\tr(Y_1)=0.$ By \eqref{eq:nonab-Z-definitions}, $\tr Z_1=\tr(Y_1)-\overline{\tr(Y_1)}=0.$ On the other hand, the first identity in
\eqref{eq:nonab-S-scalar} gives
$
Z_1
=
2\I S_1
=
2\I\delta'\alpha_0I_m.
$
Thus
\[
0
=
\tr Z_1
=
2\I m\delta'\alpha_0.
\]
Since $m=n-2\geq1$ and $\delta'>0$, it follows that $\alpha_0=0.$ Therefore
\[
S_1=0
\qquad\text{and}\qquad
Z_1=0.
\]
Substituting these identities into \eqref{eq:nonab-circle}, using \eqref{eq:nonab-S-scalar} and the fact that $w$ is a unit vector, we obtain $\beta^2=-\frac{\ka}{4\chi}.$ Hence $\ka=-4\chi\beta^2.$ Since $\ka<0$ and $\chi>0$, one has $\beta\neq0$.  Since $S_1=0$, we obtain
$$Z_2 = 2\frac{\delta}{\delta'}S_1+2\I(S_2+\frac{\delta}{\delta'}H_1) = 2\I\beta I_m$$
by using the second identity in \eqref{eq:nonab-S-scalar}. This proves \eqref{eq:nonab-negative-normal-form}.

On the one hand, the matrix $Y_1=H_1$ is Hermitian by $S_1=0$. On the other hand,
\[
[Y_1,Y_2]=\lambda Y_1,
\qquad
\lambda\neq0.
\]
By Lemma~\ref{lem:commutator-nilpotent}, $Y_1$ is nilpotent.  A
Hermitian nilpotent matrix is zero, and therefore
\[
Y_1=0.
\]
Hence $Z_2=Y_2-Y_2^*$ by \eqref{eq:nonab-Z-definitions}. Taking traces and using \eqref{eq:unimod-nonab}, we find
\[
\begin{aligned}
2\I m\beta=\tr Z_2=\tr(Y_2)-\overline{\tr(Y_2)}=
\frac{\I}{\sqrt2\,\delta'}
(2\sigma-d'-c).
\end{aligned}
\]
Thus
\[
\beta
=
\frac{2\sigma-d'-c}
{2\sqrt2\,m\delta'}.
\]
This proves \eqref{eq:nonab-beta}.
\end{proof}

\begin{lemma}\label{lem:nonab-negative-reduction}
Let $(\mathfrak g,J,g)$ be a unimodular Hermitian Lie algebra of
complex dimension $n\geq3$ containing an abelian ideal
$\mathfrak a$ of real codimension two.  Assume that
$J\mathfrak a\neq\mathfrak a$, that
$\mathfrak g/\mathfrak a$ is non-abelian, and that a canonical
metric connection $D_s^r$ with $\chi>0$ has constant holomorphic
sectional curvature $H^{D_s^r}=\ka<0$.  Put $m=n-2$ and use the
normal form $Z_1=0$ and $Z_2=2\I\beta I_m$ from
Lemma~\ref{lem:nonab-negative-normal-form}.  Let $u$ be the vector
defined in \eqref{eq:nonab-u}.  Then
\[
u=v_2^1=v_2^2=0,
\qquad
v_1^2=2\zeta v_1^1.
\]
Consequently,
\begin{equation}\label{eq:nonab-V-negative}
V_1=(v_1^1,v_1^2),
\qquad
v_1^2=2\zeta v_1^1,
\qquad
V_2=0.
\end{equation}
\end{lemma}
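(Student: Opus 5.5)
Fix $\alpha=1,2$, a unit vector $w\in\mathcal W$, and an index $p\geq3$ with $e_p=w$ after a unitary change inside $\mathcal W$. Apply the master equation \eqref{eq:master-equation} to mixed index patterns involving one or three copies of a $\mathcal U$-index together with $\mathcal W$-indices. The key point is that Lemma~\ref{lem:curvature-support} kills most Chern terms. Those with one $\mathcal U$ index and three $\mathcal W$ indices vanish identically: every term of $\widehat R^c_{\alpha\bar p p\bar p}$ has its first or second slot in $\mathcal W$. The right-hand side $\frac{\ka}{2}(\delta\delta+\delta\delta)$ also vanishes for such patterns. Hence we get $\widehat v_{\alpha\bar p p\bar p}=0$ and $\widehat v_{p\bar p q\bar \alpha}=0$ type identities. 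These linear conditions on $\widehat v$ are the source of the vanishing of $V$ and $u$.

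Concretely, expand $4\widehat v_{\alpha\bar p p\bar p}$ via \eqref{eq:vhat-definition}, using the torsion table \eqref{eq:nonab-torsion-1}--\eqref{eq:nonab-u} with $Z_1=0$ and $Z_2=2\I\beta I_m$. Only the sums over $a\in\{1,2\}$ survive, since $T^*_{pq}=0$. A term like $T^p_{\alpha a}\overline{T^p_{pa}}$ involves $T^p_{\alpha\gamma}$, which is $(Z_\alpha)_{\cdot}$ or $u$ when $\{\alpha,\gamma\}=\{1,2\}$, paired with $\overline{T^p_{p\gamma}}=-\overline{(Z_\gamma)_{pp}}$. A term like $T^\alpha_{pa}\overline{T^p_{pa}}$ involves $(v_a^\alpha)_p$ against $\overline{(Z_a)_{pp}}$. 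Since $Z_1=0$ and $(Z_2)_{pp}=2\I\beta$, I expect the pattern $\widehat v_{\alpha\bar p p\bar p}$ to reduce to $\overline{2\I\beta}$ times a linear combination of the $p$-th components of $v_2^\alpha$ and of $T^p_{\alpha 2}$. For $\alpha=1$ that combination is $u_p$ plus $(v_2^1)_p$ terms; for $\alpha=2$ it is $(v_2^2)_p$. Since $\beta\neq0$, this gives $(v_2^2)_p=0$ and one relation between $u_p$ and $(v_2^1)_p$. I would also use the conjugate-slot pattern $\widehat v_{p\bar\alpha p\bar p}$, or the pattern with the $\mathcal U$ index in the second position, which is the complex conjugate of the first with roles swapped. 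If these are not independent, I would bring in the off-diagonal $\mathcal W$ patterns $\widehat v_{\alpha\bar p q\bar q}$ with $p\neq q$, which use $(Z_2)_{qq}=2\I\beta$ as well. These should separate $u_p$ from $(v_2^1)_p$ and yield $u=0$ and $v_2^1=0$. Letting $w$, equivalently $p$, vary over all unit vectors gives the full vectors.

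Once $u=v_2^1=0$, definition \eqref{eq:nonab-u} reads $0=-\overline{v_1^2}-2\zeta\overline{v_1^1}$. Because $\zeta=\I\delta/\delta'$ is purely imaginary, conjugating gives $v_1^2=-2\overline\zeta v_1^1=2\zeta v_1^1$. Together with $v_2^1=v_2^2=0$, this is exactly \eqref{eq:nonab-V-negative}.

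The main obstacle is the bookkeeping in expanding $\widehat v$ for mixed patterns. The four terms of \eqref{eq:vhat-definition} must be tracked carefully with the skew-symmetry $T^k_{ij}=-T^k_{ji}$, and the two unknowns $u$ and $v_2^1$ must be shown to appear in independent combinations. If the one-$\mathcal U$ patterns give only a single combination, I would instead use the diagonal Chern condition $Q_{22}=0$ from the $\widehat R^c$ part, now with $\widehat v$ contributions. Its trace, combined with $Y_1=0$ from Lemma~\ref{lem:nonab-negative-normal-form}, would force $\|V_2\|^2$ to be controlled by the vanishing terms.
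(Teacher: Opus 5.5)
Your first step is correct and matches the paper: the components of \eqref{eq:master-equation} with one $\mathcal U$-index and three $\mathcal W$-indices have vanishing Chern part and vanishing right-hand side, so $\widehat v_{\alpha\bar pq\bar r}=0$. The gap is in separating $u$ from $v_2^1$, and none of your proposed remedies does this. Only $a=2$ survives, with $T^p_{\alpha2}=\delta_{\alpha1}u_p$, $T^p_{q2}=-2\I\beta\delta_{pq}$ and $T^\alpha_{r2}=-(v_2^\alpha)_r$. Carrying out the bookkeeping gives, for all $p,q,r\geq3$,
\[
4\widehat v_{\alpha\bar pq\bar r}=2\I\beta\bigl[\delta_{\alpha1}(u_p\delta_{qr}+u_r\delta_{qp})+\delta_{pq}\overline{(v_2^\alpha)_r}+\delta_{qr}\overline{(v_2^\alpha)_p}\bigr].
\]
For $\alpha=1$ this involves only the combination $u+\overline{v_2^1}$, whether or not the $\mathcal W$-indices coincide. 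Every other one-$\mathcal U$ pattern, such as $\widehat v_{p\bar\alpha p\bar p}$ or $\widehat v_{p\bar pq\bar\alpha}$, reduces to this one or its complex conjugate. This follows from the symmetry of $\widehat v$ in $(i,k)$ and in $(j,\ell)$, together with $\widehat v_{j\bar i\ell\bar k}=\overline{\widehat v_{i\bar jk\bar\ell}}$. So the whole family is equivalent to $v_2^2=0$ and $v_2^1=-\overline u$, and nothing more.

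Your fallback does not work either. In the negative case the $(2,\bar2,p,\bar q)$ block is not $Q_{22}=0$. With $Y_1=0$ one has $Q_{22}=V_2V_2^*-2D_0\beta I_m$, where $D_0=(d'+b\delta^2)/(\sqrt2\,\delta')$, and this must equal $\chi\{v_1^2(v_1^2)^*+v_2^2(v_2^2)^*+\overline u(\overline u)^*\}-4\chi\beta^2I_m$. The equation contains the undetermined scalar $D_0$. After inserting $v_2^2=0$ and $v_2^1=-\overline u$ it reads $(1-\chi)\overline u(\overline u)^*-\chi v_1^2(v_1^2)^*=(2D_0\beta-4\chi\beta^2)I_m$. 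The $u$-terms cancel identically when $\chi=1$, so its trace cannot force $u=0$.

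The missing idea is to use a component with two $\mathcal U$-indices whose Chern part and right-hand side still vanish, namely $(i,j,k,\ell)=(p,1,p,1)$. Here $\widehat R^c_{p\bar1p\bar1}=R^c_{p\bar1p\bar1}=0$ by Lemma~\ref{lem:curvature-support}, since the first slot lies in $\mathcal W$, and $\delta_{p1}=0$; hence $\widehat v_{p\bar1p\bar1}=0$. The four terms of \eqref{eq:vhat-definition} coincide. The $a=1$ term vanishes because $T^p_{11}=0$, and the $a\geq3$ terms vanish because $T^p_{1a}=(Z_1)_{ap}=0$. One gets
\[
0=\widehat v_{p\bar1p\bar1}=T^1_{p2}\overline{T^p_{12}}=-(v_2^1)_p\overline{u_p}.
\]
This is a quadratic relation, independent of the linear one. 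Substituting $(v_2^1)_p=-\overline{u_p}$ gives $\overline{u_p}^{\,2}=0$, hence $u=0$ and then $v_2^1=0$. Your concluding step from $u=v_2^1=0$ to $v_1^2=2\zeta v_1^1$ is correct as written.
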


\begin{proof}
By Lemma~\ref{lem:nonab-negative-normal-form}, there exists
$\beta\in\R\setminus\{0\}$ such that
$Z_1=0$ and $Z_2=2\I\beta I_m$.  We first consider
$\widehat v_{\alpha\bar p q\bar r}$, where
$\alpha\in\{1,2\}$ and $p,q,r\geq3$.

For every $\alpha\in\{1,2\}$ and $p,q,r\geq3$,
Lemma~\ref{lem:curvature-support} and
\eqref{eq:symmetrization} give
$\widehat R^c_{\alpha\bar p q\bar r}=0$.  Since $\delta_{\alpha p}\delta_{qr}
+\delta_{\alpha r}\delta_{qp}=0$,  the formula \eqref{eq:master-equation} gives
$-\chi\widehat v_{\alpha\bar p q\bar r}=0$.  Since $\chi>0$, we
conclude that
$\widehat v_{\alpha\bar p q\bar r}=0$.

Taking $i=\alpha$, $j=p$, $k=q$, and $\ell=r$ in
\eqref{eq:vhat-definition}, we obtain
\[
\begin{aligned}
4\widehat v_{\alpha\bar p q\bar r}
=\sum_a\bigl(T^p_{\alpha a}\overline{T^q_{ra}}
+T^p_{qa}\overline{T^\alpha_{ra}}+T^r_{\alpha a}\overline{T^q_{pa}}
+T^r_{qa}\overline{T^\alpha_{pa}}
\bigr).
\end{aligned}
\]
By \eqref{eq:nonab-torsion-1}--\eqref{eq:nonab-torsion-W},
the identities $Z_1=0$ and $Z_2=2\I\beta I_m$, and the
skew-symmetry of the torsion in its lower indices, the summands
with $a=1$ and $a\geq3$ vanish.  Indeed, when $a=1$, each product
contains a factor of the form $T^q_{p1}=-T^q_{1p}=0$, whereas for $a\geq3$, each product
contains a factor of the form $T^q_{pr}=0$.  Thus only the
summand with $a=2$ can contribute. More precisely, \eqref{eq:nonab-torsion-2},
\eqref{eq:nonab-torsion-3}, and the definition
$u_p=T^p_{12}$ give
\[
T^p_{\alpha2}=\delta_{\alpha1}u_p,\qquad
T^p_{q2}=-2\I\beta\delta_{pq},\qquad
T^\alpha_{r2}=-(v_2^\alpha)_r.
\]
Using these identities and their analogues with $p$ and $r$
interchanged, we obtain
\begin{equation}\label{eq:nonab-three-W}
\begin{aligned}
4\widehat v_{\alpha\bar p q\bar r}
=2\I\beta\bigl[\delta_{\alpha1}
 (u_p\delta_{qr}+u_r\delta_{qp})+\delta_{pq}\overline{(v_2^\alpha)_r}
 +\delta_{rq}\overline{(v_2^\alpha)_p}
\bigr].
\end{aligned}
\end{equation}
Fix $p\geq3$ and set $q=r=p$.  Taking $\alpha=2$ in
\eqref{eq:nonab-three-W} gives
$0=4\I\beta\overline{(v_2^2)_p}$, and hence
$(v_2^2)_p=0$ by $\beta\in\R\setminus\{0\}$.  Taking $\alpha=1$ gives
$0=4\I\beta(u_p+\overline{(v_2^1)_p})$, so
$(v_2^1)_p=-\overline{u_p}$.  Since $p$ is arbitrary, we have
\begin{equation}\label{eq:nonab-v2-relations}
v_2^2=0,
\qquad
v_2^1=-\overline u.
\end{equation}

Fix $p\geq3$.  By
Lemma~\ref{lem:curvature-support} and
\eqref{eq:symmetrization}, one has
$\widehat R^c_{p\bar1p\bar1}=0$. Since $\delta_{p1}=0$,
formula \eqref{eq:master-equation} gives
$-\chi\widehat v_{p\bar1p\bar1}=0$.  Since $\chi>0$, it follows
that
\[
\widehat v_{p\bar1p\bar1}=0.
\]

Taking $i=k=p$ and $j=\ell=1$ in
\eqref{eq:vhat-definition}, we have
\[
\widehat v_{p\bar1p\bar1}
=
\sum_aT^1_{pa}\overline{T^p_{1a}}.
\]
The term with $a=1$ vanishes because $T^p_{11}=0$, while the terms
with $a\geq3$ vanish by \eqref{eq:nonab-torsion-1} and $Z_1=0$.
Thus only $a=2$ contributes.  By skew-symmetry of the torsion,
\eqref{eq:nonab-torsion-3}, and the definition of $u$, we have
\[
T^1_{p2}=-T^1_{2p}=-(v_2^1)_p,
\qquad
T^p_{12}=u_p.
\]
Consequently,
\[
\widehat v_{p\bar1p\bar1}
=
-(v_2^1)_p\overline{u_p}.
\]
Using \eqref{eq:nonab-v2-relations}, namely
$(v_2^1)_p=-\overline{u_p}$, we obtain
\[
0
=
\widehat v_{p\bar1p\bar1}
=
\overline{u_p}^{\,2}.
\]
Hence $u_p=0$.  Since $p\geq3$ is arbitrary, $u=0$, and
\eqref{eq:nonab-v2-relations} then gives $v_2^1=0$.

Finally, \eqref{eq:nonab-u} gives
$\overline{v_1^2}=-2\zeta\overline{v_1^1}$, where  $\zeta=\frac{\I\delta}{\delta'}$.  Since
$\overline\zeta=-\zeta$, taking complex conjugates gives
$v_1^2=2\zeta v_1^1$.  Consequently,
\[
u=0,
\qquad
v_2^1=v_2^2=0,
\qquad
v_1^2=2\zeta v_1^1.
\]
This completes the proof.
\end{proof}

For $\alpha\in\{1,2\}$, set
\begin{equation}\label{eq:nonab-auxiliary-notation}
\mu_\alpha:=T^\alpha_{12},
\qquad
B_0:=\frac{b\delta'}{\sqrt2},
\qquad
D_0:=\frac{d'+b\delta^2}{\sqrt2\,\delta'},
\qquad
\xi:=\frac{b\delta+\I a}{\sqrt2}.
\end{equation}

\begin{lemma}\label{lem:nonab-negative-Q}
Let $(\mathfrak g,J,g)$ be a unimodular Hermitian Lie algebra of
complex dimension $n\geq3$ containing an abelian ideal
$\mathfrak a$ of real codimension two.  Assume that
$J\mathfrak a\neq\mathfrak a$, that
$\mathfrak g/\mathfrak a$ is non-abelian, and that a canonical
metric connection $D_s^r$ with $\chi>0$ has constant holomorphic
sectional curvature $H^{D_s^r}=\ka<0$.  Let $m=n-2$, and adopt the
notation and relations
\[
Z_1=0,\qquad
Z_2=2\I\beta I_m,\qquad
V_1=(v_1^1,v_1^2),\qquad
v_1^2=2\zeta v_1^1,\qquad
V_2=0
\]
from Lemmas~\ref{lem:nonab-negative-normal-form} and
\ref{lem:nonab-negative-reduction}.  Then the lower-right Chern
curvature blocks are given by
\begin{equation}\label{eq:nonab-Chern-Q-negative}
\begin{aligned}
Q_{11}
&=v_1^1(v_1^1)^*+v_1^2(v_1^2)^*-2B_0\beta I_m,\\
Q_{22}
&=-2D_0\beta I_m,\\
Q_{12}
&=-2\I\beta\xi I_m.
\end{aligned}
\end{equation}
Moreover, the $(\alpha,\bar{\beta},p,\bar{q})$-components of
\eqref{eq:master-equation}, where
$\alpha,\beta\in\{1,2\}$ and $p,q\geq3$, are equivalent to the
following matrix identities:
\begin{align}
Q_{11}
&=\chi\left\{
v_1^1(v_1^1)^*
-\bigl(8\beta^2
+4\beta\operatorname{Im}\mu_1\bigr)I_m
\right\},
\label{eq:nonab-Q11-negative}\\
Q_{22}
&=\chi\left\{
v_1^2(v_1^2)^*-4\beta^2I_m
\right\},
\label{eq:nonab-Q22-negative}\\
Q_{12}
&=\chi\left\{
v_1^2(v_1^1)^*
+2\I\beta\mu_2I_m
\right\}.
\label{eq:nonab-Q12-negative}
\end{align}
\end{lemma}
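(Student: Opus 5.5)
The plan is to compute both sides of the $(\alpha,\bar\beta,p,\bar q)$-components of \eqref{eq:master-equation} explicitly, using the normal form from Lemmas~\ref{lem:nonab-negative-normal-form} and \ref{lem:nonab-negative-reduction}: $Y_1=0$, $Y_2-Y_2^*=Z_2=2\I\beta I_m$, $V_2=0$, $u=0$.

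\emph{First half: the formulas \eqref{eq:nonab-Chern-Q-negative}.} I will substitute $Y_1=0$ and $V_2=0$ into \eqref{eq:Q}. Since $Y_1=0$, only the $\gamma=2$ terms survive, so
\[
Q_{\alpha\beta}=V_\alpha V_\beta^*+[Y_\alpha,Y_\beta^*]-(E_\alpha)_{2\beta}Y_2^*-\overline{(E_\beta)_{2\alpha}}\,Y_2 .
\]
From \eqref{eq:E1-nonab}, \eqref{eq:E2-nonab} and the notation \eqref{eq:nonab-auxiliary-notation} one reads off
\[
(E_1)_{21}=\I B_0,\qquad (E_2)_{22}=\I D_0,\qquad (E_1)_{22}=-\xi,\qquad (E_2)_{21}=\overline{\xi}.
\]
For the diagonal blocks, the scalar terms combine into $\I B_0(Y_2-Y_2^*)$ and $\I D_0(Y_2-Y_2^*)$. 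Each of these equals $-2B_0\beta I_m$, respectively $-2D_0\beta I_m$, since $Y_2-Y_2^*=2\I\beta I_m$. For $Q_{22}$ there is also the commutator $[Y_2,Y_2^*]$. It vanishes because $Y_2=H_2+\I\beta I_m$ with $H_2$ Hermitian, so $Y_2$ is normal. For $Q_{12}$, the terms $V_1V_2^*$ and $[Y_1,Y_2^*]$ vanish, and the rest is $\xi(Y_2^*-Y_2)=-2\I\beta\xi I_m$. Finally $V_1V_1^*=v_1^1(v_1^1)^*+v_1^2(v_1^2)^*$, which completes the first half.

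\emph{Second half: the equivalent matrix identities.} For $\alpha,\beta\in\{1,2\}$ and $p,q\geq3$ the Kronecker deltas in \eqref{eq:master-equation} vanish. In the symmetrization \eqref{eq:symmetrization}, the three terms with a $\mathcal W$-index in the first or second slot vanish by Lemma~\ref{lem:curvature-support}. Hence $\widehat R^c_{\alpha\bar\beta p\bar q}=\tfrac14 (Q_{\alpha\beta})_{pq}$, and the equation reduces to
\[
(Q_{\alpha\beta})_{pq}=4\chi\,\widehat v_{\alpha\bar\beta p\bar q}.
\]
It then remains to expand
\[
4\widehat v_{\alpha\bar\beta p\bar q}=\sum_a\bigl(T^\beta_{\alpha a}\overline{T^p_{qa}}+T^\beta_{pa}\overline{T^\alpha_{qa}}+T^q_{\alpha a}\overline{T^p_{\beta a}}+T^q_{pa}\overline{T^\alpha_{\beta a}}\bigr).
\]
For this I will use the complete torsion table, together with skew-symmetry of $T$ in its lower indices:
\[
T^q_{1p}=0,\quad T^q_{2p}=2\I\beta\delta_{pq},\quad T^\gamma_{\alpha p}=(v_\alpha^\gamma)_p \ (\text{so } v_2^\gamma=0),\quad T^p_{12}=u_p=0,\quad T^\gamma_{12}=\mu_\gamma,\quad T^*_{pq}=0.
\]
Splitting the sum over $a$ into $a\in\{1,2\}$ and $a\geq3$, each of the four products either vanishes or gives one of three kinds of contribution. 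The first kind is a rank-one term $v_1^\gamma(v_1^{\gamma'})^*$. The second kind is a $\beta^2$ term, coming from $T^q_{p2}\overline{T^p_{q2}}$-type products. The third kind is a $\beta\mu$ term, coming from products of $T^q_{2p}$ with $\overline{T^\alpha_{21}}$. Carrying this out for $(\alpha,\beta)=(1,1),(2,2),(1,2)$ should produce exactly the right-hand sides of \eqref{eq:nonab-Q11-negative}--\eqref{eq:nonab-Q12-negative}. In the $(1,1)$ case, the $\mu_1$ contribution appears as $2\I\beta\overline{\mu_1}-2\I\beta\mu_1$ type terms, which combine to $4\beta\,\im\mu_1$.

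\emph{Main obstacle.} I expect the main difficulty to be this last sign and index bookkeeping: tracking which index is conjugated and using $T^\alpha_{p2}=-T^\alpha_{2p}$ and $T^\alpha_{21}=-\mu_\alpha$ consistently. I would check each case separately before summing. Once both sides are computed, the stated equivalence is immediate, since the componentwise equations are exactly the matrix identities.
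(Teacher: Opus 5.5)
Your computation of the Chern blocks \eqref{eq:nonab-Chern-Q-negative} is correct and follows the paper, and so are the reduction $\widehat R^c_{\alpha\bar\beta p\bar q}=\tfrac14(Q_{\alpha\beta})_{pq}$ and your torsion table. The gap is your claim that the Kronecker deltas in \eqref{eq:master-equation} vanish for $\alpha,\beta\in\{1,2\}$ and $p,q\geq3$. With $(i,j,k,\ell)=(\alpha,\beta,p,q)$ the right-hand side is $\frac{\kappa}{2}(\delta_{\alpha\beta}\delta_{pq}+\delta_{\alpha q}\delta_{p\beta})$. The second product vanishes, but the first does not when $\alpha=\beta$ and $p=q$. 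The correct reduced equation is
\[
(Q_{\alpha\beta})_{pq}=4\chi\,\widehat v_{\alpha\bar\beta p\bar q}+2\kappa\,\delta_{\alpha\beta}\delta_{pq}=4\chi\,\widehat v_{\alpha\bar\beta p\bar q}-8\chi\beta^2\,\delta_{\alpha\beta}\delta_{pq}.
\]
The last step needs $\kappa=-4\chi\beta^2$ from Lemma~\ref{lem:nonab-negative-normal-form}, which your proposal never uses.

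This omission changes the answer. The torsion expansion gives
\[
4\widehat v_{\alpha\bar\beta p\bar q}=(v_1^\beta)_p\overline{(v_1^\alpha)_q}+4\beta^2\delta_{\alpha2}\delta_{\beta2}\delta_{pq}+2\I\beta\bigl(\delta_{\alpha1}\mu_\beta-\delta_{\beta1}\overline{\mu_\alpha}\bigr)\delta_{pq}.
\]
So the only $\beta^2$ contribution from torsion sits in the $(2,2)$ block, with a plus sign, and the $(1,1)$ block has none. Your equation $(Q_{\alpha\beta})_{pq}=4\chi\widehat v_{\alpha\bar\beta p\bar q}$ would therefore yield $Q_{11}=\chi\{v_1^1(v_1^1)^*-4\beta\operatorname{Im}\mu_1\,I_m\}$ and $Q_{22}=\chi\{v_1^2(v_1^2)^*+4\beta^2I_m\}$. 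These are not \eqref{eq:nonab-Q11-negative} and \eqref{eq:nonab-Q22-negative}; only the $(1,2)$ identity comes out right, because $\delta_{12}=0$. The $-8\beta^2$ in $Q_{11}$ and the $-4\beta^2=4\beta^2-8\beta^2$ in $Q_{22}$ come entirely from the $\kappa$-term. Your expectation that the computation ``should produce exactly the right-hand sides'' therefore fails as you set it up.

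There is also a sign slip in your $\mu_1$-terms. They are $2\I\beta\mu_1\delta_{pq}$, from $T^1_{12}\overline{T^p_{q2}}$, and $-2\I\beta\overline{\mu_1}\delta_{pq}$, from $T^q_{p2}\overline{T^1_{12}}$. These combine to $-4\beta\operatorname{Im}\mu_1\,\delta_{pq}$, so the signs in your sketch are reversed.

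Restoring $2\kappa\delta_{\alpha\beta}\delta_{pq}$ and substituting $\kappa=-4\chi\beta^2$ repairs the argument, which then coincides with the paper's proof.
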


\begin{proof}
The expressions for $E_1$ and $E_2$ in
\eqref{eq:E1-nonab} and \eqref{eq:E2-nonab} give
\[
(E_1)_{21}=\I B_0,\qquad
(E_2)_{22}=\I D_0,\qquad
(E_1)_{22}=-\xi,\qquad
\overline{(E_2)_{21}}=\xi.
\]
By Lemma~\ref{lem:nonab-negative-normal-form}, Lemma~\ref{lem:nonab-negative-reduction} and \eqref{eq:nonab-Z-definitions}, we have
\[
Y_1=0,\qquad
Y_2-Y_2^*=2\I\beta I_m,\qquad
V_2=0,\qquad
u=0.
\]
In particular, $Y_2^*=Y_2-2\I\beta I_m$, and hence
$[Y_2,Y_2^*]=0$. Therefore \eqref{eq:Q} gives
\begin{align*}
Q_{11}
&=V_1V_1^*-(E_1)_{21}Y_2^*
 -\overline{(E_1)_{21}}Y_2
 =V_1V_1^*-2B_0\beta I_m,\\
Q_{22}
&=-(E_2)_{22}Y_2^*
 -\overline{(E_2)_{22}}Y_2
 =-2D_0\beta I_m,\\
Q_{12}
&=-(E_1)_{22}Y_2^*
 -\overline{(E_2)_{21}}Y_2
 =-2\I\beta\xi I_m.
\end{align*}
This proves \eqref{eq:nonab-Chern-Q-negative}.

We next compute
$\widehat v_{\alpha\bar\beta p\bar q}$ for
$\alpha,\beta\in\{1,2\}$ and $p,q\geq3$.  For every
$\gamma\in\{1,2\}$, equations
\eqref{eq:nonab-torsion-1}--\eqref{eq:nonab-torsion-3},
the above lemma, and the skew-symmetry of the torsion give
\[
T^q_{1p}=0,\qquad
T^q_{2p}=2\I\beta\delta_{pq},\qquad
T^\gamma_{1p}=(v_1^\gamma)_p,\qquad
T^\gamma_{2p}=0,
\]
together with $T^\gamma_{12}=\mu_\gamma$ and
$T^p_{12}=u_p=0$. Taking $i=\alpha$, $j=\beta$, $k=p$, and $\ell=q$ in
\eqref{eq:vhat-definition}, we obtain
\begin{equation}\label{eq:nonab-vhat-Q}
\begin{aligned}
4\widehat v_{\alpha\bar\beta p\bar q}
{}&=\sum_a\bigl(
T^\beta_{\alpha a}\overline{T^p_{qa}}
+T^\beta_{pa}\overline{T^\alpha_{qa}}+T^q_{\alpha a}\overline{T^p_{\beta a}}
+T^q_{pa}\overline{T^\alpha_{\beta a}}
\bigr)\\
&=
(v_1^\beta)_p\overline{(v_1^\alpha)_q}
+4\beta^2\delta_{\alpha2}\delta_{\beta2}\delta_{pq}+
2\I\beta
\bigl(
\delta_{\alpha1}\mu_\beta
-\delta_{\beta1}\overline{\mu_\alpha}
\bigr)\delta_{pq}.
\end{aligned}
\end{equation}
By the definition of $Q_{\alpha\beta}$, $(Q_{\alpha\beta})_{pq}
=R^c_{\alpha\bar\beta p\bar q}.$ Moreover, Lemma~\ref{lem:curvature-support} and
\eqref{eq:symmetrization} imply that $R^c_{\alpha\bar\beta p\bar q}=4\widehat R^c_{\alpha\bar\beta p\bar q}$. Thus the $(\alpha,\beta,p,q)$ component of
\eqref{eq:master-equation} gives
\[
(Q_{\alpha\beta})_{pq}
=
4\chi\widehat v_{\alpha\bar\beta p\bar q}
+
2\ka\delta_{\alpha\beta}\delta_{pq}=
4\chi\widehat v_{\alpha\bar\beta p\bar q}
-
8\chi\beta^2\delta_{\alpha\beta}\delta_{pq}.
\]
Here we use $\ka=-4\chi\beta^2$ by Lemma \ref{lem:nonab-negative-normal-form}.

Taking $(\alpha,\beta)=(1,1)$ in
\eqref{eq:nonab-vhat-Q}, we have
$\delta_{\alpha2}\delta_{\beta2}=0$ and
$\delta_{\alpha1}=\delta_{\beta1}=1$.  Hence, for every
$p,q\geq3$,
\begin{equation}\label{eq:nonab-vhat-11}
4\widehat v_{1\bar1p\bar q}=
(v_1^1)_p\overline{(v_1^1)_q}
-4\beta\operatorname{Im}\mu_1\,\delta_{pq},
\end{equation}
where we used $\mu_1-\overline{\mu_1}=2\I\operatorname{Im}\mu_1$.

On the other hand, the $(1,1,p,q)$ component of
\eqref{eq:master-equation} gives
\[
(Q_{11})_{pq}
=
4\chi\widehat v_{1\bar1p\bar q}
+
2\ka\delta_{pq}.
\]
Using \eqref{eq:nonab-vhat-11} and
$\ka=-4\chi\beta^2$, we obtain
\[
\begin{aligned}
(Q_{11})_{pq}
&=
\chi
\left\{
(v_1^1)_p\overline{(v_1^1)_q}
-
4\beta\operatorname{Im}\mu_1\,\delta_{pq}
\right\}
-
8\chi\beta^2\delta_{pq}=
\chi
\left\{
(v_1^1)_p\overline{(v_1^1)_q}
-
\bigl(8\beta^2+4\beta\operatorname{Im}\mu_1\bigr)\delta_{pq}
\right\}.
\end{aligned}
\]
Since the $(p,q)$-entry of
$v_1^1(v_1^1)^*$ is
$(v_1^1)_p\overline{(v_1^1)_q}$, the previous componentwise
identity is equivalent to
\[
Q_{11}
=
\chi\left\{
v_1^1(v_1^1)^*
-
\bigl(8\beta^2+4\beta\operatorname{Im}\mu_1\bigr)I_m
\right\}.
\]
Similarly, the choices $(\alpha,\beta)=(2,2)$, $(1,2)$ in
\eqref{eq:nonab-vhat-Q}, and using
$\mu_\alpha-\overline{\mu_\alpha}=2\I\operatorname{Im}\mu_\alpha$, we obtain
\[
Q_{22}
=
\chi\left\{
v_1^2(v_1^2)^*
-
4\beta^2I_m
\right\},
\qquad
Q_{12}
=
\chi\left\{
v_1^2(v_1^1)^*
+
2\I\beta\mu_2I_m
\right\},
\]
respectively.  These are precisely
\eqref{eq:nonab-Q11-negative}--\eqref{eq:nonab-Q12-negative}.
\end{proof}


\begin{lemma}\label{lem:nonab-m-one}
Under the hypotheses of Lemma~\ref{lem:nonab-negative-Q}, assume
that $n=3$.  Then $v_1^1=v_1^2=0.$
\end{lemma}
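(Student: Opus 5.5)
When $n=3$ we have $m=1$, so $\mathcal W$ is one-dimensional and every $\mathcal W$-block becomes a scalar. Write $v_1^1=x\in\C$, so $v_1^2=2\zeta x$, and write $Y_2=y\in\C$. Then $Y_1=0$, $V_2=0$ and $y-\overline y=2\I\beta$, with $\beta\neq0$.

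The matrix identities of Lemma~\ref{lem:nonab-negative-Q} then reduce to three scalar equations:
\[
(1+4|\zeta|^2)|x|^2-2B_0\beta=\chi\bigl(|x|^2-8\beta^2-4\beta\,\im\mu_1\bigr),
\qquad
-2D_0\beta=\chi\bigl(4|\zeta|^2|x|^2-4\beta^2\bigr),
\]
\[
-2\I\beta\xi=\chi\bigl(2\zeta|x|^2+2\I\beta\mu_2\bigr).
\]
For $m\geq2$ one could separate the rank-one part from the scalar part. Here that is impossible, so these equations alone will not force $x=0$, and extra relations are needed.

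\emph{Jacobi relation.} The first extra input is the block Jacobi identity \eqref{eq:nonab-block-Jacobi}. Since $Y_1=0$ and $V_2=0$, it reads $V_1E_2-yV_1=\lambda V_1$, that is,
\[
(1,\,2\zeta)\bigl(E_2-(y+\lambda)I_2\bigr)=0 \quad\text{if } x\neq0 .
\]
Thus, assuming $x\neq0$ for contradiction, $(1,2\zeta)$ is a left eigenvector of $E_2$ with eigenvalue $y+\lambda$. Reading off the two components gives
\[
(E_2)_{11}+2\zeta(E_2)_{21}=y+\lambda,
\qquad
(E_2)_{12}+2\zeta(E_2)_{22}=2\zeta(y+\lambda).
\]
These are explicit relations among $a,b,c,c',d',\sigma,\delta$ and $y$, using \eqref{eq:E2-nonab}.

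Taking imaginary parts and using $\im y=\beta$ together with the formula \eqref{eq:nonab-beta} for $\beta$ (with $m=1$) gives a linear constraint. I would then split into the generic, half-generic and degenerate types, using \eqref{eq:nonab-three-types}. In each type, the real part of the first relation and the second relation should either be inconsistent or pin down $y$ and the parameters.

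\emph{Further components.} The second extra input is the components of \eqref{eq:master-equation} with exactly one index equal to $3$, such as $(1,\bar1,1,\bar3)$ and $(2,\bar2,2,\bar3)$. On the Chern side these are off-diagonal-block entries of $\mathcal R_{\alpha\bar\beta}$ in \eqref{eq:curvature-matrix}, which are linear in $x$ with coefficients built from $E_\alpha$ and $y$. On the torsion side, $\widehat v$ is computed from $T^\gamma_{1p}=(v_1^\gamma)_p$, $T^3_{23}=2\I\beta$ and $\mu_\gamma$; these terms are also linear in $x$. Dividing by $x\neq0$ yields further scalar equations. I would also use the diagonal components $(1,\bar1,1,\bar1)$ and $(2,\bar2,2,\bar2)$ via \eqref{eq:r1-general}--\eqref{eq:r2-general} with $\widehat v_{\alpha\bar\alpha\alpha\bar\alpha}=\sum_a|T^\alpha_{\alpha a}|^2$. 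Combined with the three $Q$-equations above, the aim is a sign contradiction, much as in Proposition~\ref{prop:generic}. For example, in the second $Q$-equation the sign of $D_0\beta$ is controlled, and in the first the sign of $B_0\beta$ is controlled, against the nonnegative $|x|^2$ terms.

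The main obstacle is the generic type. There many parameters survive, and the case analysis of the eigenvector relations has to be coupled with the curvature equations to produce a definite sign contradiction. The half-generic and degenerate types should collapse quickly, because $E_1$ is then nearly zero and the eigenvector relation forces $\zeta$ or $y+\lambda$ to take values incompatible with $\beta\neq0$.
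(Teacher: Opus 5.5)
Your ingredients are the right ones, but the proposal stops before any type is actually decided. Every case ends with ``should'', and for the generic type you explicitly leave the contradiction to a curvature sign argument that is never produced. That is the gap.

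The specific extra components you single out would not close it. With $Y_1=V_2=u=0$ and $v_1^2=2\zeta v_1^1$, the $(1,\bar1,1,\bar3)$ and $(2,\bar2,2,\bar3)$ components of \eqref{eq:master-equation} reduce to $\zeta(E_1)_{12}\overline{v_1^1}=0$ and $\zeta\overline{v_1^1}\bigl((E_2)_{12}+\chi\mu_2\bigr)=0$. Both are empty when $\delta=0$.

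The paper's decisive input is instead the $(1,\bar2,1,\bar3)$ component. There $\widehat v_{1\bar21\bar3}=0$ and $2\widehat R^c_{1\bar21\bar3}=R^c_{1\bar21\bar3}=-(E_1)_{12}\overline{v_1^1}$, so $v_1^1\neq0$ forces $(E_1)_{12}=0$. This kills the half-generic type at once. In the generic type it forces $a=\delta=c=0$; the first component of \eqref{eq:nonab-block-Jacobi} and \eqref{eq:nonab-beta} then give two opposite values of $\beta$. The degenerate type, where $E_1=0$ and this identity is void, is handled by both components of \eqref{eq:nonab-block-Jacobi} together with the comparison of the two formulas for $Q_{22}$.

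Your Jacobi route can be completed without the $(1,\bar2,1,\bar3)$ component, but the difficulty sits elsewhere than you expect. Write $y=Y_2$. The two components of $(1,2\zeta)\bigl(E_2-(y+\lambda)I_2\bigr)=0$, combined with $\im y=\beta$ and \eqref{eq:nonab-beta}, give in every type:
\begin{itemize}
\item $2(c-2\sigma+b\delta^2)=2\sigma-d'-c$;
\item $c'=3a\delta^2$;
\item $\delta(2c-d'-\sigma+b\delta^2)=0$.
\end{itemize}

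In the generic type these are already inconsistent, since the first says $c+b\delta^2=4\sigma$.
\begin{itemize}
\item If $\delta=0$, then $c'=0$ gives $a(c+\sigma)=5a\sigma=0$. Hence $a=0=c$, contradicting $c=4\sigma$.
\item If $\delta\neq0$, then $c=-5\sigma/2\neq0$, so $a\neq0$, and $b\delta^2=13\sigma/2$. But $3a\delta^2=c'=-a(c+\sigma)/b$ gives $b\delta^2=\sigma/2$.
\end{itemize}
So no curvature input is needed in the generic type.

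Conversely, in the half-generic and degenerate types the eigenvector relations are compatible with $\beta\neq0$, contrary to your prediction. They yield $c=3\sigma/2$ (resp. $d'=6\sigma$), together with $c'=0$ and $\delta=0$. The contradiction comes only from $D_0=2\chi\beta$, i.e. the $Q_{22}$ comparison, which gives $3=-2\chi$ (resp. $6=-4\chi$).

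With these computations written out you would have a complete proof that bypasses the paper's $(E_1)_{12}\overline{v_1^1}=0$ step. As it stands, the proposal is only a plan.
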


\begin{proof}
Since $m=1$, the quantities $v_1^1$, $v_1^2$, and $Y_2$ are
complex numbers.  Since $H^{D_s^r}=\ka$ is constant,
\[
\widehat R^{D_s^r}_{1\bar2 1\bar3}
=
\frac{\ka}{2}
\bigl(
\delta_{12}\delta_{13}
+
\delta_{13}\delta_{12}
\bigr)
=
0.
\]
Moreover, Lemma~\ref{lem:nonab-negative-reduction} gives
$u=V_2=0$, while $Z_1=0$. Substitution into \eqref{eq:vhat-definition}, together with
\eqref{eq:nonab-torsion-1}--\eqref{eq:nonab-u}, therefore gives
$\widehat v_{1\bar2 1\bar3}=0$. Hence $\widehat R^c_{1\bar2 1\bar3}=0$ by \eqref{eq:master-equation}. On the other hand, the upper-right block of
$\mathcal R_{1\bar2}$ is
$E_1V_2^*-V_2^*Y_1-\sum_{\gamma=1}^2
(E_1)_{\gamma2}V_\gamma^*$, which gives
$-(E_1)_{12}V_1^*$ because $Y_1=V_2=0$.  Since $m=1$ and
$V_1=(v_1^1,v_1^2)$, this gives
$R^c_{1\bar2 1\bar3}=-(E_1)_{12}\overline{v_1^1}$.
Furthermore, \eqref{eq:symmetrization} and
Lemma~\ref{lem:curvature-support} yield
$2\widehat R^c_{1\bar2 1\bar3}
=R^c_{1\bar2 1\bar3}$.  Hence
\[
0=2\widehat R^c_{1\bar2 1\bar3}
=
-(E_1)_{12}\overline{v_1^1}.
\]
Thus $(E_1)_{12}\overline{v_1^1}=0$.

Suppose that $v_1^1\neq0$.  Then
$(E_1)_{12}=0$.

\smallskip
\noindent
\emph{Generic type.}
Here $b\neq0$, $c=-a^2/b$, and $d'=-c-2\sigma$.  By
\eqref{eq:E1-nonab},
\[
(E_1)_{12}
=
\frac{-2a\delta+\I(c+b\delta^2)}
{\sqrt2\,\delta'}.
\]
Thus $(E_1)_{12}=0$ gives $a\delta=0$ and
$c+b\delta^2=0$.  Since $c=-a^2/b$, the latter identity becomes
$b^2\delta^2=a^2$, and hence $a=\delta=0$.  Consequently,
$c=0$, $d'=-2\sigma$, $\zeta=0$, and
$v_1^2=2\zeta v_1^1=0$.  Thus $V_1=(v_1^1,0)$.

Since $Y_1=V_2=0$, equation \eqref{eq:nonab-block-Jacobi}
gives $V_1E_2-Y_2V_1=\lambda V_1$, where $\lambda=\frac{\I\sigma}{\sqrt2\,\delta'}\neq0.$  Its first component is
$\bigl((E_2)_{11}-Y_2-\lambda\bigr)v_1^1=0$.  Since
$v_1^1\neq0$, we obtain $Y_2=(E_2)_{11}-\lambda$.  By
\eqref{eq:E2-nonab},
$(E_2)_{11}=-\I\sigma/(\sqrt2\,\delta')$, while
$\lambda=\I\sigma/(\sqrt2\,\delta')$.  Therefore
$Y_2=-2\I\sigma/(\sqrt2\,\delta')$.

Since $m=1$ and $Y_1=0$, the identity
$Z_2=2\I\beta I_m$ becomes
$Y_2-\overline{Y_2}=2\I\beta$, and hence
$\beta=-2\sigma/(\sqrt2\,\delta')$.  On the other hand,
\eqref{eq:nonab-beta}, together with $m=1$, $c=0$, and
$d'=-2\sigma$, gives
\[
\beta
=
\frac{2\sigma-d'-c}{2\sqrt2\,\delta'}
=
\frac{2\sigma}{\sqrt2\,\delta'}.
\]
Comparing the two expressions for $\beta$ gives
$4\sigma/(\sqrt2\,\delta')=0$, contradicting
$\sigma\neq0$ and $\delta'>0$.

\smallskip
\noindent
\emph{Half-generic type.}
In this case $a=b=0$ and $c\neq0$.  Hence
$(E_1)_{12}=\I c/(\sqrt2\,\delta')\neq0$ by \eqref{eq:E1-nonab}, contradicting
$(E_1)_{12}=0$.

\smallskip
\noindent
\emph{Degenerate type.}
Here $a=b=c=0$.  By \eqref{eq:E2-nonab},
\[
(E_2)_{11}=-\frac{\I\sigma}{\sqrt2\,\delta'},
\qquad
(E_2)_{21}=0,
\qquad
(E_2)_{22}=\frac{\I d'}{\sqrt2\,\delta'}.
\]
Moreover, under the assumption $v_1^1\neq0$,
Lemma~\ref{lem:nonab-negative-reduction} gives
$V_1=(v_1^1,2\zeta v_1^1)$, while
$Y_1=V_2=0$.  Hence \eqref{eq:nonab-block-Jacobi} gives
$V_1E_2-Y_2V_1=\lambda V_1$.

Its first component is
$\bigl((E_2)_{11}-Y_2-\lambda\bigr)v_1^1=0$.  Since
$v_1^1\neq0$ and
$\lambda=\I\sigma/(\sqrt2\,\delta')$, we obtain
$Y_2=-2\I\sigma/(\sqrt2\,\delta')$.  Since $m=1$ and $Y_1=0$,
the identity $Z_2=2\I\beta I_m$ becomes
$Y_2-\overline{Y_2}=2\I\beta$, and therefore
$\beta=-2\sigma/(\sqrt2\,\delta')$.  On the other hand,
\eqref{eq:nonab-beta}, with $m=1$ and $c=0$, gives
\[
-\frac{2\sigma}{\sqrt2\,\delta'}
=
\beta
=
\frac{2\sigma-d'}{2\sqrt2\,\delta'}.
\]
Hence $d'=6\sigma$.

The second component of
$V_1E_2-Y_2V_1=\lambda V_1$ is
\[
(E_2)_{12}
+2\zeta(E_2)_{22}
-2\zeta Y_2
=
2\zeta\lambda.
\]
Using $\zeta=\I\delta/\delta'$ and the preceding expressions for
$(E_2)_{22}$, $Y_2$, and $\lambda$, we obtain
\[
(E_2)_{12}
=
2\zeta\bigl(\lambda+Y_2-(E_2)_{22}\bigr)
=
\frac{2\delta(d'+\sigma)}
{\sqrt2\,(\delta')^2}.
\]
On the other hand, \eqref{eq:E2-nonab} gives
\[
(E_2)_{12}
=
\frac{\I c'+\delta(d'+\sigma)}
{\sqrt2\,(\delta')^2}.
\]
Comparing the two expressions yields
$\I c'=\delta(d'+\sigma)$.  Since $c',\delta,d'$, and $\sigma$
are real, the left-hand side is purely imaginary and the
right-hand side is real.  Hence
$c'=0$ and $\delta(d'+\sigma)=0$.  Since
$d'=6\sigma$ and $\sigma\neq0$, one has
$d'+\sigma=7\sigma\neq0$, and therefore $\delta=0$.  Thus
$\zeta=\frac{\I\delta}{\delta'}=0$ and $v_1^2=2\zeta v_1^1=0$.

Finally, comparing the two formulas for $Q_{22}$ in
\eqref{eq:nonab-Chern-Q-negative} and
\eqref{eq:nonab-Q22-negative}, and using $v_1^2=0$, gives
\[
-2\frac{d'}{\sqrt2\,\delta'}\beta
=
-4\chi\beta^2.
\]
Since $\beta\neq0$, it follows that
$d'/(\sqrt2\,\delta')=2\chi\beta$.  Substituting
$d'=6\sigma$ and
$\beta=-2\sigma/(\sqrt2\,\delta')$ gives
$6\sigma=-4\chi\sigma$, or equivalently
$6=-4\chi$, contradicting $\chi>0$ and $\sigma\neq0$.

All three cases lead to contradictions.  Therefore $v_1^1=0$.
Since $v_1^2=2\zeta v_1^1$, we also have $v_1^2=0$. This completes the proof.
\end{proof}

\begin{lemma}\label{lem:nonab-scalar-system}
Under the hypotheses of Lemma~\ref{lem:nonab-negative-Q}, one has
\begin{equation}\label{eq:nonab-scalar-system}
B_0
=
2\chi\bigl(2\beta+\operatorname{Im}\mu_1\bigr),
\qquad
D_0=2\chi\beta,
\qquad
\xi=-\chi\mu_2.
\end{equation}
\end{lemma}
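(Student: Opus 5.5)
Compare the two descriptions of the blocks $Q_{\alpha\beta}$: the Chern-side formulas \eqref{eq:nonab-Chern-Q-negative} from Lemma~\ref{lem:nonab-negative-Q} and the $D^r_s$-side identities \eqref{eq:nonab-Q11-negative}--\eqref{eq:nonab-Q12-negative}. Equating them gives three matrix identities on $\mathcal W$ of the form (rank $\leq 1$ matrix) $=$ (scalar multiple of $I_m$):
\begin{align*}
v_1^1(v_1^1)^*+v_1^2(v_1^2)^*-2B_0\beta I_m&=\chi v_1^1(v_1^1)^*-\chi\bigl(8\beta^2+4\beta\operatorname{Im}\mu_1\bigr)I_m,\\
-2D_0\beta I_m&=\chi v_1^2(v_1^2)^*-4\chi\beta^2I_m,\\
-2\I\beta\xi I_m&=\chi v_1^2(v_1^1)^*+2\I\chi\beta\mu_2I_m.
\end{align*}
The whole argument then reduces to showing that the rank-one terms disappear. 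Once they do, we can divide by $\beta\neq0$, which Lemma~\ref{lem:nonab-negative-normal-form} provides. This gives $B_0=2\chi(2\beta+\operatorname{Im}\mu_1)$, $D_0=2\chi\beta$ and $\xi=-\chi\mu_2$, which is \eqref{eq:nonab-scalar-system}.

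To kill the rank-one terms, split into cases by $m=n-2$. If $m=1$ (so $n=3$), Lemma~\ref{lem:nonab-m-one} gives $v_1^1=v_1^2=0$ directly and we are done. If $m\geq2$, pick a unit vector $w\in\mathcal W$ orthogonal to $v_1^2$ and apply the second identity to $w$. The left side gives $-2D_0\beta$ and the right side gives $-4\chi\beta^2$, so $D_0=2\chi\beta$. Substituting back leaves $\chi v_1^2(v_1^2)^*=0$, hence $v_1^2=0$, using $\chi>0$. Now $v_1^2=2\zeta v_1^1$ by Lemma~\ref{lem:nonab-negative-reduction}, so the third identity becomes a pure scalar identity, and $\beta\neq0$ gives $\xi=-\chi\mu_2$. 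The first identity becomes $(1-\chi)v_1^1(v_1^1)^*=\bigl(2B_0\beta-8\chi\beta^2-4\chi\beta\operatorname{Im}\mu_1\bigr)I_m$. Since $m\geq2$, a rank-$\leq1$ matrix equal to a scalar matrix must vanish, so both sides are zero. The vanishing of the scalar yields the formula for $B_0$.

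The one subtle point is that when $\chi=1$ the first identity does not force $v_1^1=0$. This is harmless: the lemma only asks for the three scalar relations, and they follow from the trace-free comparison regardless of $v_1^1$. We should still note that in the $m\geq2$ case the rank-one versus scalar argument is what extracts the scalar equations. Beyond this I expect no real obstacle; the lemma is a bookkeeping consequence of the two preceding lemmas.
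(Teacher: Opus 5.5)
Your proof is correct and follows essentially the paper's strategy: for $m=1$ you invoke Lemma~\ref{lem:nonab-m-one}, and for $m\geq2$ you evaluate the three matrix identities on a unit vector that kills the rank-one terms. The only difference is minor: the paper takes a single $w$ with $(v_1^1)^*w=0$, hence also $(v_1^2)^*w=0$ via $v_1^2=2\zeta v_1^1$, and evaluates all three identities at it, whereas you first get $D_0=2\chi\beta$ and $v_1^2=0$ from $Q_{22}$ and then use a rank argument on $Q_{11}$ — equally valid, and it makes $v_1^2=2\zeta v_1^1$ unnecessary when $m\geq2$.
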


\begin{proof}
Assume first that $m\geq2$.  Choose a unit vector
$w\in\C^m$ such that $(v_1^1)^*w=0$.  Since
$v_1^2=2\zeta v_1^1$, one also has $(v_1^2)^*w=0$.  Consequently,
\[
v_1^1(v_1^1)^*w
=
v_1^2(v_1^2)^*w
=
v_1^2(v_1^1)^*w
=
0.
\]

Comparing the two expressions for $Q_{11}w$ in
\eqref{eq:nonab-Chern-Q-negative} and
\eqref{eq:nonab-Q11-negative}, we obtain
\[
-2B_0\beta w
=
-\chi\bigl(
8\beta^2+4\beta\operatorname{Im}\mu_1
\bigr)w.
\]
Since $\beta\neq0$ and $w\neq0$, this gives
$B_0=2\chi(2\beta+\operatorname{Im}\mu_1)$.

Similarly, comparing the two expressions for $Q_{22}w$ gives
$-2D_0\beta w=-4\chi\beta^2w$, and hence
$D_0=2\chi\beta$.  Finally, comparison of the two expressions for
$Q_{12}w$ yields
$-2\I\beta\xi w=2\I\chi\beta\mu_2w$, so
$\xi=-\chi\mu_2$.

Suppose now that $m=1$.  Lemma~\ref{lem:nonab-m-one} gives
$v_1^1=v_1^2=0$.  Hence all rank-one terms in
\eqref{eq:nonab-Chern-Q-negative} and
\eqref{eq:nonab-Q11-negative}--\eqref{eq:nonab-Q12-negative}
vanish.  Comparing the resulting scalar identities, and using
$\beta\neq0$, gives exactly the three equations in
\eqref{eq:nonab-scalar-system}.
\end{proof}

\begin{proposition}\label{prop:nonab-negative-excluded}
Let $(\mathfrak g,J,g)$ be a unimodular Hermitian Lie algebra of
complex dimension $n\geq3$ containing an abelian ideal
$\mathfrak a$ of real codimension two.  Assume that
$J\mathfrak a\neq\mathfrak a$ and that
$\mathfrak g/\mathfrak a$ is non-abelian. Then the holomorphic sectional curvature of a canonical metric
connection $D_s^r$ with $\chi>0$ cannot be a negative constant.
\end{proposition}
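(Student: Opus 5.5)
The plan is to turn the scalar system \eqref{eq:nonab-scalar-system} of Lemma~\ref{lem:nonab-scalar-system} into explicit polynomial relations among the real structure constants $a,b,c,c',d',\sigma,\delta$. I will then show that these relations are incompatible with $\chi>0$, $\sigma\neq0$ and $\beta\neq0$ in each of the three types of \eqref{eq:nonab-three-types}. Throughout, suppose $H^{D^r_s}=\ka<0$ with $\chi>0$. Then Lemmas~\ref{lem:nonab-negative-normal-form}--\ref{lem:nonab-scalar-system} apply, and in particular give $Y_1=0$, $V_2=0$, $u=0$ and $\beta=(2\sigma-d'-c)/(2\sqrt2\,m\delta')\neq0$.

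\emph{Step 1: express $\mu_1,\mu_2$ through the $E$-matrices.} The computation in the degenerate part of Proposition~\ref{prop:nonab-zero-canonical} already gives $\mu_1=T^1_{12}=\I(b+2\sigma-c)/(\sqrt2\,\delta')$, so $\operatorname{Im}\mu_1=(b+2\sigma-c)/(\sqrt2\,\delta')$. Next, \eqref{eq:nonab-C12} gives $C^2_{12}=0$. Combined with \eqref{torsion}, this yields $\mu_2=-(E_2)_{12}+(E_1)_{22}=-(E_2)_{12}-\xi$.

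\emph{Step 2: rewrite the three scalar equations.} Substituting Step 1 into \eqref{eq:nonab-scalar-system} gives
\[
\frac{b\delta'^2}{2\chi}=\frac{2\sigma-d'-c}{m}+b+2\sigma-c,\qquad d'+b\delta^2=\frac{\chi(2\sigma-d'-c)}{m},\qquad (1-\chi)\xi=\chi(E_2)_{12}.
\]
The last equation involves $\xi=(b\delta+\I a)/\sqrt2$ and the explicit entry $(E_2)_{12}$ from \eqref{eq:E2-nonab}. Separating its real and imaginary parts gives
\[
(1-\chi)a\,\delta'^2=\chi(c'+a\delta^2),\qquad (1-\chi)b\,\delta\,\delta'^2=\chi\,\delta(d'+b\delta^2+\sigma).
\]

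\emph{Step 3: case analysis.} In the degenerate case ($a=b=c=0$), the first equation of Step 2 gives $0=(2\sigma-d')/m+2\sigma$, hence $d'=2\sigma(m+1)$. The second equation gives $d'=\chi(2\sigma-d')/m=-2\chi\sigma$. Comparing the two gives $m+1=-\chi<0$, which is a contradiction. In the half-generic case ($a=b=0$, $d'=c\neq0$), the first equation gives $2\sigma-2c=-m(2\sigma-c)$. The second gives $c=\chi(2\sigma-2c)/m$. Eliminating $2\sigma-2c$ yields $c=-\chi(2\sigma-c)$, and combining this with the previous relation should give $\chi$ equal to a negative number or force $c=0$. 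In either event we contradict $\chi>0$ or $c\neq0$.

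The generic case ($b\neq0$, $c=-a^2/b$, $d'=-c-2\sigma$, $c'=-\tfrac ab(c+\sigma)$) is the main obstacle. Here $2\sigma-d'-c=4\sigma$, so the second equation becomes $b(U+\eta)-2\sigma=4\chi\sigma/m$, in the normalized variables $U,H,\eta$ of \eqref{eq:generic-dimensionless}. The first equation becomes $(1-\eta)=2\chi\bigl(4H/m+1+U+2H\bigr)$. The imaginary-part equation from $\xi$ becomes $(1-\chi)a(1-\eta)=\chi a(\eta-U-H)$.

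If $a\neq0$, this last relation gives $1-\chi=\chi(\eta-U-H)/(1-\eta)$. I would then solve the remaining two equations for $H$ and $\chi$ and check signs using $0\le\eta<1$, $U\ge0$ and $m\ge1$. I expect the contradiction to come from the first equation, which forces $1-\eta>2\chi(1+U)\ge2\chi$, played against the relation just derived.

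If $a=0$, then $U=0$. The equation $U+\eta=2H(1+2\chi/m)$ from the $D_0$-relation, together with $1-\eta=2\chi(1+2H+4H/m)$, should lead to an inconsistency in the real-part $\xi$ equation when $\delta\neq0$. If instead $\delta=0$, then $\eta=0$, so $H=0$, contradicting $\sigma\neq0$.

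If the pure sign analysis fails in some subcase, I will additionally use the traces of the full identities \eqref{eq:nonab-Chern-Q-negative}--\eqref{eq:nonab-Q11-negative}. These give $\|v_1^2\|^2=(\chi-1)\|v_1^1\|^2$-type relations via $v_1^2=2\zeta v_1^1$. I will also use the block Jacobi identity \eqref{eq:nonab-block-Jacobi}, as in Lemma~\ref{lem:nonab-m-one}, to pin down $Y_2$ and obtain a second expression for $\beta$.
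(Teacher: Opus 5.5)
Your proposal is correct and takes essentially the paper's route: you make the three identities of Lemma~\ref{lem:nonab-scalar-system} explicit and exclude the degenerate, half-generic and generic types in turn, and your real/imaginary splitting of $(1-\chi)\xi=\chi(E_2)_{12}$, together with the subcase $a=\delta=0$ forcing $H=0$, is equivalent to the paper's identity $\mu_2=-\frac{1+U-H}{L}\xi$ combined with $\xi\neq0$. One small slip: since $c'=a(U-H)$, the imaginary part actually reads $(1-\chi)a(1-\eta)=\chi a(U+\eta-H)$, so for $a\neq0$ (and likewise from the real part when $a=0\neq\delta$) one gets $1-\eta=\chi(1+U-H)<\chi(1+U)$; with $H>0$ from the $D_0$-relation this contradicts $1-\eta=2\chi(1+U+2H+4H/m)>2\chi(1+U)$ exactly as you anticipated, so the sign error does not affect the conclusion.
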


\begin{proof}
Assume  that
$H^{D_s^r}=\ka<0$.  By
Lemma~\ref{lem:nonab-scalar-system}, the identities
\eqref{eq:nonab-scalar-system} hold. By \eqref{torsion},
\eqref{eq:nonab-C-D-relations},
\eqref{eq:E1-nonab}, and \eqref{eq:E2-nonab}, we have
\begin{equation}\label{eq:nonab-mu-formulas}
\mu_1
=
\frac{\I(b+2\sigma-c)}{\sqrt2\,\delta'},
\qquad
\mu_2
=
-\frac{\delta(d'+b+\sigma)+\I(a+c')}
{\sqrt2\,(\delta')^2}.
\end{equation}
Indeed, $\mu_1=-C^1_{12}-D^1_{12}+D^1_{21}$ and
$\mu_2=-C^2_{12}-D^2_{12}+D^2_{21}$.

\smallskip
\noindent
\emph{Generic type.}
Recall that in the generic type,
\[
b\neq0,\qquad
c=-\frac{a^2}{b},\qquad
c'=-\frac{a}{b}(c+\sigma),\qquad
d'=-c-2\sigma.
\]
Put $U:=a^2/b^2$, $H:=\sigma/b$, $\eta:=\delta^2$,
$L:=1-\eta=(\delta')^2>0$, and $S:=U+\eta$.  Then
\begin{equation}\label{eq:nonab-generic-relations}
c=-bU,\qquad
d'=b(U-2H),\qquad
c'=a(U-H).
\end{equation}
It follows from \eqref{eq:nonab-beta}, \eqref{eq:nonab-auxiliary-notation}, and \eqref{eq:nonab-generic-relations} that
\[
\beta=\frac{\sqrt2\,bH}{m\delta'},
\qquad
D_0=\frac{b(S-2H)}{\sqrt2\,\delta'}.
\]
Thus the identity $D_0=2\chi\beta$ in
\eqref{eq:nonab-scalar-system} becomes
\begin{equation}\label{eq:nonab-neg-generic-1}
S=2H\left(1+\frac{2\chi}{m}\right).
\end{equation}
Since $S\geq0$, $\chi>0$, and $m\geq1$, it follows that
$H\geq0$.  Moreover, $H=\sigma/b\neq0$, since $\sigma\neq0$ and
$b\neq0$.  Hence $H>0$.

By \eqref{eq:nonab-mu-formulas} and
\eqref{eq:nonab-generic-relations},
\[
\operatorname{Im}\mu_1
=
\frac{b+2\sigma-c}{\sqrt2\,\delta'}
=
\frac{b(1+U+2H)}{\sqrt2\,\delta'}.
\]
Substituting this identity, together with
$B_0=b\delta'/\sqrt2$ from
\eqref{eq:nonab-auxiliary-notation} and
$\beta=\sqrt2\,bH/(m\delta')$, into the first equation of
\eqref{eq:nonab-scalar-system}, we obtain
\[
\frac{b\delta'}{\sqrt2}
=
2\chi\left\{
\frac{2\sqrt2\,bH}{m\delta'}
+
\frac{b(1+U+2H)}{\sqrt2\,\delta'}
\right\}.
\]
Since $b\neq0$, multiplying by $\sqrt2\,\delta'/b$ and using
$L=(\delta')^2$ gives
\begin{equation}\label{eq:nonab-neg-generic-2}
L
=
2\chi\left(
1+U+2H+\frac{4H}{m}
\right).
\end{equation}
Using the second formula in \eqref{eq:nonab-mu-formulas},
\eqref{eq:nonab-generic-relations}, and the definition of $\xi$ in
\eqref{eq:nonab-auxiliary-notation}, we have
\[
a+c'=a(1+U-H),
\qquad
d'+b+\sigma=b(1+U-H),
\]
and hence
\[
\mu_2
=
-\frac{\delta(d'+b+\sigma)+\I(a+c')}
{\sqrt2\,(\delta')^2}
=
-\frac{1+U-H}{L}\,\xi.
\]
We claim that $\xi\neq0$.  Indeed, since
$\xi=(b\delta+\I a)/\sqrt2$ and $a,b,\delta\in\R$,
the equality $\xi=0$ would imply $a=0$ and $b\delta=0$.
As $b\neq0$, this gives $\delta=0$.  Thus $U=\eta=S=0$, and
\eqref{eq:nonab-neg-generic-1} would become
$-2H=4\chi H/m$, contradicting $H\neq0$ and $\chi>0$.
Therefore $\xi\neq0$.

The third equation in \eqref{eq:nonab-scalar-system} now gives
$\xi=-\chi\mu_2=\frac{\chi(1+U-H)}{L}\,\xi.$ Hence
\begin{equation}\label{eq:nonab-neg-generic-3}
L=\chi(1+U-H)
\end{equation}
by $\xi\neq0$.
Comparing \eqref{eq:nonab-neg-generic-2} and
\eqref{eq:nonab-neg-generic-3}, and dividing by $\chi>0$, we obtain
\[
2\left(1+U+2H+\frac{4H}{m}\right)=1+U-H,
\]
or equivalently
$0=1+U+(5+8/m)H$.  This contradicts $U\geq0$ and $H>0$.

\smallskip
\noindent
\emph{Half-generic type.}
Here $a=b=0$, $c\neq0$, and $d'=c$.  It follows from
\eqref{eq:nonab-beta}, \eqref{eq:nonab-mu-formulas}, and
\eqref{eq:nonab-auxiliary-notation} that
\[
\beta=\frac{\sigma-c}{\sqrt2\,m\delta'},
\qquad
\operatorname{Im}\mu_1
=\frac{2\sigma-c}{\sqrt2\,\delta'},
\qquad
B_0=0,
\qquad
D_0=\frac{c}{\sqrt2\,\delta'}.
\]
Thus the first two equations in
\eqref{eq:nonab-scalar-system} give
\begin{equation}\label{eq:nonab-neg-half}
\frac{2(\sigma-c)}m+2\sigma-c=0,
\qquad
c=\frac{2\chi}{m}(\sigma-c).
\end{equation}
The first equality gives
\[
\frac{c}{\sigma}
=
\frac{2(m+1)}{m+2}
>1.
\]
Thus $c$ and $\sigma$ have the same sign, whereas
\[
\sigma-c
=
-\frac{m}{m+2}\sigma
\]
has the opposite sign to $\sigma$, and hence also to $c$.  This
contradicts the second equality in
\eqref{eq:nonab-neg-half}, since $2\chi/m>0$ would require
$c$ and $\sigma-c$ to have the same sign.

\smallskip
\noindent
\emph{Degenerate type.}
Here $a=b=c=0$.  Equations \eqref{eq:nonab-beta},
\eqref{eq:nonab-mu-formulas}, and
\eqref{eq:nonab-auxiliary-notation} give
\[
\beta=\frac{2\sigma-d'}{2\sqrt2\,m\delta'},
\qquad
\operatorname{Im}\mu_1
=\frac{2\sigma}{\sqrt2\,\delta'},
\qquad
B_0=0,
\qquad
D_0=\frac{d'}{\sqrt2\,\delta'}.
\]
Since $B_0=0$, the first equation in
\eqref{eq:nonab-scalar-system}, together with $\chi>0$, gives
$2\beta+\operatorname{Im}\mu_1=0$.  Substituting the above
expressions for $\beta$ and $\operatorname{Im}\mu_1$, we obtain
\[
0=\frac{1}{\sqrt2\,\delta'}
\left(
\frac{2\sigma-d'}m+2\sigma
\right).
\]
Since $\delta'>0$, it follows that
$(2\sigma-d')/m+2\sigma=0$, and hence
$d'=2\sigma(m+1)$.

On the other hand, the second equation in
\eqref{eq:nonab-scalar-system} is $D_0=2\chi\beta$.  Substituting
the formulas for $D_0$ and $\beta$ gives
\[
d'=\frac{\chi}{m}(2\sigma-d').
\]
Using $d'=2\sigma(m+1)$, we find
\[
d'
=
\frac{\chi}{m}
\bigl(2\sigma-2\sigma(m+1)\bigr)
=
-2\chi\sigma.
\]
Thus
$2\sigma(m+1)=-2\chi\sigma$.  Since $\sigma\neq0$, division by
$2\sigma$ yields $\chi=-(m+1)$, contradicting $\chi>0$.

Thus each of the generic, half-generic, and degenerate types leads
to a contradiction.  Hence the assumption $H^{D_s^r}=\ka<0$ is impossible.  This completes the proof.
\end{proof}

\vspace{0.3cm}

\section{The $J{\mathfrak a} \neq {\mathfrak a}$ and ${\mathfrak g} / {\mathfrak a}$ abelian case}
\label{sec:abelian-quotient}

Throughout this section, we will assume that {\em  ${\mathfrak g}$ is a unimodular Lie algebra of real dimension $2n$, ${\mathfrak a}\subseteq {\mathfrak g}$ an abelian ideal of codimension $2$ with ${\mathfrak g} / {\mathfrak a}$  abelian, and $J$ a complex structure on ${\mathfrak g}$ with $J{\mathfrak a} \neq {\mathfrak a}$.} This case requires a different admissible-frame construction from the
one used in Section \ref{sec:nonabelian-quotient}.

Write ${\mathfrak g}'=[{\mathfrak g},{\mathfrak g}]$. Since ${\mathfrak g} / {\mathfrak a}$ is abelian, we have ${\mathfrak g}'\subseteq {\mathfrak a}$ so ${\mathfrak g}$ is $2$-step solvable. Let
\[
\mathfrak a_J=\mathfrak a\cap J\mathfrak a.
\]
Let $V=\mathfrak a_J^\perp\cap\mathfrak a, \,\mathfrak a=\mathfrak a_J\oplus V,$ and choose an orthonormal basis $\{x,y\}$ of $V$ for which the induced endomorphism $A_x$ of $V\simeq\mathfrak a/\mathfrak a_J$ satisfies
\begin{equation}\label{eq:trace-Ax-zero}
\tr(A_x)=0.
\end{equation}
Write
\[
e_1=\frac1{\sqrt2}(x-\I Jx),
\qquad
Y=\frac1{\sqrt2}(y-\I Jy)
=\I\delta e_1+\delta'e_2,
\]
where $\delta=\langle Jx,y\rangle \in(-1,1)$ and
$\delta'=\sqrt{1-\delta^2} \in(0,1]$.  Together with a unitary basis
$e_3,\ldots,e_n$ of $(\mathfrak a_J)^{1,0}$, this is the admissible
frame of \cite[Appendix B]{CaoZhengFV}.

By \cite[Appendix~B, Lemma~14]{CaoZhengFV}, the admissible frame in the abelian quotient case satisfies
\begin{equation}\label{eq:abelian-block-form}
D_\alpha=
\begin{pmatrix}E_\alpha&0\\V_\alpha&Y_\alpha\end{pmatrix},
\qquad \alpha=1,2,
\qquad
D_p=0\quad(p\geq3),
\end{equation}
Moreover,
\begin{equation}\label{eq:abelian-C12}
C^1_{12}=C^2_{12}=0.
\end{equation}
Taking $(i,k)=(1,2)$ in the second Jacobi identity in
\eqref{Jacobi}, we obtain
\[
[D_1,D_2]
=
-\sum_{r=1}^nC^r_{12}D_r.
\]
By \eqref{eq:abelian-C12}, the terms with $r=1,2$ vanish, while
\eqref{eq:abelian-block-form} gives $D_r=0$ for $r\geq3$.
Hence $[D_1,D_2]=0$.
Comparing the lower-right blocks yields
\begin{equation}\label{eq:abelian-Y-commute}
[Y_1,Y_2]=0.
\end{equation}
Comparing the lower-left blocks yields
\begin{equation}\label{eq:abelian-block-Jacobi}
V_1E_2+Y_1V_2-V_2E_1-Y_2V_1=0.
\end{equation}
Meanwhile, \cite{CaoZhengFV} gives, for
$\alpha,\beta\in\{1,2\}$, $p,q\geq3$, and $1\leq \ast \leq n$,
\begin{equation}\label{eq:abelian-C-D-relations}
\begin{aligned}
& C^{\ast}_{pq} = D^{\ast}_{\ast p} = C^{\alpha }_{\beta p} = D^p_{\alpha\beta} =0, \ \ \ \zeta:=\frac{\I\delta}{\delta'},\\
& C^{\ast}_{1p}= \overline{D^p_{\ast 1}} , \ \ \ C^{\ast}_{2p}= \overline{D^p_{\ast 2}} -2\zeta\overline{D^p_{\ast1}}, \ \ \ C^{\ast}_{12}= \overline{D^2_{\ast 1}} -  \overline{D^1_{\ast 2}} + 2\zeta\overline{D^1_{\ast1}},
\end{aligned}
\end{equation}
Write $V_\alpha=(v_\alpha^1,v_\alpha^2)$ and set
\begin{equation}\label{eq:abelian-Z-definitions}
Z_1=Y_1-Y_1^*,
\qquad
Z_2=Y_2-Y_2^*+2\zeta Y_1^*.
\end{equation}
We regard $Y_\alpha$ and $Z_\alpha$ as the corresponding
complex-linear endomorphisms of $\mathcal W$ in the chosen unitary
basis. Exactly as in the non-abelian case, but now using
\eqref{eq:abelian-C-D-relations}, the Chern torsion satisfies
\begin{align}
T^q_{1p}&=(Z_1)_{pq},
\label{eq:abelian-torsion-1}\\
T^q_{2p}&=(Z_2)_{pq},
\label{eq:abelian-torsion-2}\\
T^\beta_{\alpha p}&=(v_\alpha^\beta)_p.
\label{eq:abelian-torsion-3}
\end{align}
The $\mathcal W$-valued component of $T(e_1,e_2)$ is
\begin{equation}\label{eq:abelian-u}
u=(T^p_{12})_{p=3}^n
=-\overline{v_1^2}+\overline{v_2^1}
-2\zeta\overline{v_1^1}.
\end{equation}
These formulas are derived from
$T^j_{ik}=-C^j_{ik}-D^j_{ik}+D^j_{ki}$ and are not consequences of
the block form alone; the full identities in
\eqref{eq:abelian-C-D-relations} are essential.
  Therefore the purely algebraic results of
Subsection \ref{subsec:block-curvature} apply again.  In particular,
constant Chern holomorphic sectional curvature implies
\begin{equation}\label{eq:abelian-kappa-zero}
\ka=0,
\qquad
Q_{\alpha\beta}=0\quad(1\leq\alpha,\beta\leq2).
\end{equation}

Modulo $\mathfrak a_J$, write
\begin{equation}\label{eq:abelian-real-brackets}
[Jx,x]=ax+by,
\quad [Jx,y]=cx-ay,
\quad [Jy,x]=cx-ay,
\quad [Jy,y]=c'x+d'y.
\end{equation}
The equality of the middle two brackets follows from integrability and
$[Jx,Jy]\in\mathfrak a_J$.  The induced matrices
\[
A_x=\begin{pmatrix}a&b\\c&-a\end{pmatrix},
\qquad
A_y=\begin{pmatrix}c&-a\\c'&d'\end{pmatrix}
\]
commute.  Expanding $[A_x,A_y]=0$ gives
\begin{equation}\label{eq:abelian-relations}
 ac+bc'=0,
\qquad
 b(d'-c)=2a^2,
\qquad
 c(d'-c)=-2ac'.
\end{equation}
Unlike the non-abelian quotient case, no nilpotence conclusion for
$A_x$ is available; in particular, $a^2+bc$ need not vanish.

By \cite[Appendix B, Lemma 14]{CaoZhengFV}, the matrices $E_1,E_2$ are
\begin{equation}\label{eq:E1-ab}
E_1=\frac1{\sqrt2}
\begin{pmatrix}
b\delta+\I a&
\displaystyle\frac{-2a\delta+\I c+\I b\delta^2}{\delta'}\\[2mm]
\I b\delta'&-b\delta-\I a
\end{pmatrix},
\end{equation}
and
\begin{equation}\label{eq:E2-ab}
E_2=\frac1{\sqrt2}
\begin{pmatrix}
\displaystyle\frac{\I(c-b\delta^2)}{\delta'}&
\displaystyle\frac{\I(c'+a\delta^2)
+\delta(d'+b\delta^2)}{\delta'^2}\\[2mm]
b\delta-\I a&
\displaystyle\frac{\I(d'+b\delta^2)}{\delta'}
\end{pmatrix}.
\end{equation}
Let
$
\tau_\alpha:=\tr(Y_\alpha)$,
$\alpha=1,2$.
Unimodularity is equivalent to
\begin{equation}\label{eq:unimod-abelian}
\tr Z_1=0,
\qquad
\tr Z_2+\frac{\I(c+d')}{\sqrt2\,\delta'}=0.
\end{equation}
The first identity in \eqref{eq:unimod-abelian} gives
$0=\tau_1-\overline{\tau_1}$, and hence
\begin{equation}\label{eq:abelian-tau1-real}
\tau_1\in\R.
\end{equation}
By the definition of $Z_2$, the second identity in
\eqref{eq:unimod-abelian} becomes $\tau_2-\overline{\tau_2}
+\frac{2\I\delta}{\delta'}\overline{\tau_1}
+\frac{\I(c+d')}{\sqrt2\,\delta'}
=0.$ Using \eqref{eq:abelian-tau1-real} and
$\tau_2-\overline{\tau_2}
=2\I\operatorname{Im}\tau_2$, we obtain
\begin{equation}\label{eq:abelian-imtau2}
\operatorname{Im}\tau_2
=
-\frac{\delta}{\delta'}\tau_1
-\frac{c+d'}{2\sqrt2\,\delta'}.
\end{equation}

\subsection{The Chern case: $b=0$}

\begin{proposition}\label{prop:abelian-b0}
Let $(\mathfrak g,J,g)$ be a unimodular Hermitian Lie algebra of
complex dimension $n\geq3$ containing an abelian ideal
$\mathfrak a$ of real codimension two.  Assume that
$J\mathfrak a\neq\mathfrak a$, that
$\mathfrak g/\mathfrak a$ is abelian, and that the structural
coefficient $b$ in \eqref{eq:abelian-real-brackets} vanishes.  If
$H^c=\ka$ is a constant, then $\ka=0$ and $R^c=0$.
\end{proposition}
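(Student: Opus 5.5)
We will follow the same route as in Proposition~\ref{prop:degenerate}. First we use the Jacobi relations and the vanishing of $Q_{11}$ to kill $V_1$ and $E_1$. Then we show that every curvature block except $\mathcal R_{2\bar2}$ vanishes, and finally we use the density and polarization argument to kill $\mathcal R_{2\bar2}$. By Lemma~\ref{lem:curvature-support} we already know $\ka=0$, so $H^c\equiv0$ and \eqref{eq:abelian-kappa-zero} gives $Q_{\alpha\beta}=0$.

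\emph{Step 1 (structure constants).} With $b=0$, the relations \eqref{eq:abelian-relations} read $ac=0$, $2a^2=0$ and $c(d'-c)=-2ac'$. Hence $a=0$ and $c(d'-c)=0$. By \eqref{eq:E1-ab} this gives
\[
E_1=\begin{pmatrix}0&\frac{\I c}{\sqrt2\,\delta'}\\ 0&0\end{pmatrix}.
\]
By \eqref{eq:E2-ab} we also get $(E_2)_{11}=\frac{\I c}{\sqrt2\,\delta'}$ and $(E_2)_{21}=0$.

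\emph{Step 2 (kill $V_1$ and $c$).} Since $(E_1)_{11}=(E_1)_{21}=0$, the identity $Q_{11}=0$ from \eqref{eq:Q} reduces to $V_1V_1^*+[Y_1,Y_1^*]=0$. Taking traces gives $V_1=0$, and then $[Y_1,Y_1^*]=0$, so $Y_1$ is normal. Now \eqref{eq:r1-general} gives $0=R^c_{1\bar11\bar1}=|(E_1)_{12}|^2=c^2/(2\delta'^2)$, so $c=0$. Thus $E_1=0$, $(E_2)_{11}=(E_2)_{21}=0$, and $D_1=\operatorname{diag}(0,Y_1)$ with $Y_1$ normal. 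Note that, unlike the non-abelian case, $Y_1$ need not vanish, since no nilpotence is available. This is the main point to handle.

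\emph{Step 3 (vanishing of $\mathcal R_{1\bar1}$, $\mathcal R_{1\bar2}$, $\mathcal R_{2\bar1}$).} By \eqref{eq:curvature-matrix}, since $E_1=0$ and the first column of $E_2$ vanishes, all scalar correction terms in $\mathcal R_{1\bar1}$ and $\mathcal R_{1\bar2}$ drop out. Hence $\mathcal R_{1\bar1}=[D_1,D_1^*]=0$ by normality of $Y_1$, and $\mathcal R_{1\bar2}=[D_1,D_2^*]$. A block computation gives
\[
[D_1,D_2^*]=\begin{pmatrix}0&-V_2^*Y_1\\0&[Y_1,Y_2^*]\end{pmatrix}.
\]
We treat the two nonzero blocks separately.
\begin{itemize}
\item Since $[Y_1,Y_2]=0$ by \eqref{eq:abelian-Y-commute} and $Y_1$ is normal, Fuglede's theorem gives $[Y_1,Y_2^*]=0$.
\item The block Jacobi identity \eqref{eq:abelian-block-Jacobi} with $V_1=0$ and $E_1=0$ becomes $Y_1V_2=0$. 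So the columns of $V_2$ lie in $\ker Y_1=\ker Y_1^*$ (normality again), hence $Y_1^*V_2=0$, i.e.\ $V_2^*Y_1=0$.
\end{itemize}
Thus $\mathcal R_{1\bar2}=0$, and by conjugate symmetry also $\mathcal R_{2\bar1}=0$. The Fuglede and kernel step is the one I regard as the crux; everything else is bookkeeping.

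\emph{Step 4 (conclusion).} Together with Lemma~\ref{lem:curvature-support}, for $X=z_1e_1+z_2e_2+w$ we get
\[
R^c_{X\bar XX\bar X}=|z_2|^2\sum_{k,\ell}R^c_{2\bar2k\bar\ell}X^k\overline{X^\ell}.
\]
Exactly as in the proof of Proposition~\ref{prop:degenerate}, vanishing on the dense set $\{z_2\neq0\}$, continuity, and Hermitian polarization give $\mathcal R_{2\bar2}=0$. Hence $R^c=0$.
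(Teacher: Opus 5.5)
Your proof is correct, but after the common opening it takes a different route from the paper. The common opening is $a=b=c=0$, $E_1=0$, $V_1=0$, $Y_1$ normal. You reach it slightly more economically, reading $c=0$ off $R^c_{1\bar11\bar1}=c^2/(2\delta'^2)$ instead of first splitting on $c\neq0$.

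\textbf{The paper's route.} It pins down the remaining structure explicitly. It feeds the unimodularity identities \eqref{eq:abelian-tau1-real}--\eqref{eq:abelian-imtau2} into the trace of $Q_{22}=0$ to get $\|V_2\|^2=-(d')^2/(2\delta'^2)$, hence $d'=0$ and $V_2=0$. Then $R^c_{2\bar22\bar2}=-(c')^2/(2\delta'^4)$ forces $c'=0$. At that point $E_1=E_2=V_1=V_2=0$, and $Q_{\alpha\beta}=[Y_\alpha,Y_\beta^*]=0$ kills every block $\mathcal R_{\alpha\bar\beta}$.

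\textbf{Your route.} You never touch the second column of $E_2$, nor $V_2$, nor unimodularity. The mixed block $\mathcal R_{1\bar2}=[D_1,D_2^*]$ vanishes for two reasons:
\begin{itemize}
\item Fuglede's theorem gives $[Y_1,Y_2^*]=0$. It is elementary here, since a normal matrix satisfies $Y_1^*=p(Y_1)$ for some polynomial $p$.
\item The block Jacobi identity gives $Y_1V_2=0$, and $\ker Y_1=\ker Y_1^*$ then yields $V_2^*Y_1=0$.
\end{itemize}
You then remove $\mathcal R_{2\bar2}$ with the density/polarization argument of Proposition~\ref{prop:degenerate}.

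\textbf{What each buys.} Your argument needs less computation and shows the conclusion does not depend on the unimodularity hypothesis. The paper's argument produces, along the way, the explicit normal form of the Chern-flat examples in this case: $a=b=c=c'=d'=0$, $V_1=V_2=0$, $[Y_\alpha,Y_\beta^*]=0$. That normal form is also reachable without unimodularity. Once $a=b=c=0$, the diagonal condition $H^c\equiv0$ already gives
$R^c_{2\bar22\bar2}=-\bigl(|(E_2)_{12}|^2+2|(E_2)_{22}|^2+\|v_2^2\|^2\bigr)=0$, hence $c'=d'=0$. The trace of $Q_{22}=0$ then gives $V_2=0$.
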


\begin{proof}
Assume that the Chern holomorphic sectional curvature $H^c$ is a constant $\kappa$, by Lemma~\ref{lem:curvature-support}, one has $\kappa=0$.
Hence
\[
H^c\equiv0
\qquad\text{and}\qquad
R^c_{i\bar i i\bar i}=0,\quad \forall \, 1\leq i\leq n.
\]
Moreover,
\eqref{eq:Q-zero} gives $Q_{\alpha\beta}=0,\,\alpha,\beta\in\{1,2\}.$
The second equation in \eqref{eq:abelian-relations} is
$b(d'-c)=2a^2$.  Since $b=0$, it gives $a=0$.  The third equation then becomes
$$c(d'-c)=0.$$
If $c\neq0$, then  $d'=c$. Substitute $a=b=0$ into \eqref{eq:E1-ab}, we obtain
\[
E_1=\begin{pmatrix}
0&\displaystyle\frac{\I c}{\sqrt2\delta'}\\[2mm]
0&0
\end{pmatrix}.
\]
Its first column is zero, so \eqref{eq:Q} gives
$Q_{11}=V_1V_1^*+[Y_1,Y_1^*]=0$.  Taking traces yields $V_1=0$.
Therefore $v_1^1=0$, and \eqref{eq:r1-general} gives
\[
R^c_{1\bar1 1\bar1}
=\left|\frac{\I c}{\sqrt2\delta'}\right|^2
=\frac{c^2}{2\delta'^2}>0,
\]
contradicting $H^c=0$.  Hence $c=0$.

Now $a=b=c=0$, and hence $E_1=0$.  Thus \eqref{eq:Q} and \eqref{eq:Q-zero}  give $Q_{11}=V_1V_1^*+[Y_1,Y_1^*]=0.$ Taking the trace as above yields $V_1=0.$ Substituting this back into $Q_{11}=0$, we obtain $[Y_1,Y_1^*]=0.$ By \eqref{eq:abelian-tau1-real}, one has $\tau_1\in\R.$ Moreover, since $c=0$, equation
\eqref{eq:abelian-imtau2} becomes
\begin{equation}\label{eq:b0-imtau2}
\operatorname{Im}\tau_2
=
-\frac{\delta}{\delta'}\tau_1
-\frac{d'}{2\sqrt2\,\delta'}.
\end{equation}
Applying \eqref{eq:trace-Q} with $\alpha=2$, $a=b=c=0$, \eqref{eq:E2-ab}, and \eqref{eq:abelian-tau1-real} we obtain
\[
\begin{aligned}
\|V_2\|^2
&=
2\re\left\{
\frac{\I c'+\delta d'}{\sqrt2\,(\delta')^2}
\,\overline{\tau_1}
+
\frac{\I d'}{\sqrt2\,\delta'}
\,\overline{\tau_2}
\right\}\\
&=
\frac{\sqrt2\,\delta d'}{(\delta')^2}\tau_1
+
\frac{\sqrt2\,d'}{\delta'}\operatorname{Im}\tau_2.
\end{aligned}
\]
Substituting \eqref{eq:b0-imtau2}, we find
\[
0\leq\|V_2\|^2
=
-\frac{(d')^2}{2(\delta')^2}
\leq0, \quad \delta'\in (0,1]
\]
and hence $d'=0,\,V_2=0.$ It follows that
\[
E_2=
\begin{pmatrix}
0&\displaystyle\frac{\I c'}{\sqrt2\,(\delta')^2}\\[2mm]
0&0
\end{pmatrix}.
\]
Since $H^c=0$, equation \eqref{eq:r2-general} gives
\[
0=R^c_{2\bar2 2\bar2}
=
-\frac{(c')^2}{2(\delta')^4}.
\]
Therefore $c'=0$, and thus
\[
E_1=E_2=V_1=V_2=0.
\]

Now \eqref{eq:Q-zero} gives
$[Y_\alpha,Y_\beta^*]=0$ for all $\alpha,\beta\in\{1,2\}$.  In the
curvature matrix formula \eqref{eq:curvature-matrix}, the two scalar
sums vanish because $E_1=E_2=0$, and
\[
[D_\alpha,D_\beta^*]
=\begin{pmatrix}0&0\\0&[Y_\alpha,Y_\beta^*]\end{pmatrix}=0.
\]
Thus $\mathcal R_{\alpha\bar\beta}=0$ for
$\alpha,\beta=1,2$.  Together with Lemma~\ref{lem:curvature-support},
this proves $R^c=0$.
\end{proof}

\subsection{The Chern case: $b\neq0$}

\begin{proposition}\label{prop:abelian-bnonzero}
Let $(\mathfrak g,J,g)$ be a unimodular Hermitian Lie algebra of
complex dimension $n\geq3$ containing an abelian ideal
$\mathfrak a$ of real codimension two.  Assume that
$J\mathfrak a\neq\mathfrak a$, that
$\mathfrak g/\mathfrak a$ is abelian, and that the structural
coefficient $b$ in \eqref{eq:abelian-real-brackets} is nonzero.
Then the Chern holomorphic sectional curvature of $g$ cannot be
constant.
\end{proposition}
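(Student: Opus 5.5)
We argue by contradiction, following the template of Proposition~\ref{prop:generic}. Assume $H^c$ is constant. By Lemma~\ref{lem:curvature-support} and \eqref{eq:abelian-kappa-zero} we have $H^c\equiv0$ and $Q_{\alpha\beta}=0$. In particular $R^c_{1\bar11\bar1}=R^c_{2\bar22\bar2}=0$, which by \eqref{eq:r1-general}--\eqref{eq:r2-general} gives
\[
r_1=\|v_1^1\|^2\leq\|V_1\|^2,
\qquad
r_2=\|v_2^2\|^2\leq\|V_2\|^2 .
\]
Since $b\neq0$, the relations \eqref{eq:abelian-relations} can be solved as
\[
d'=c+\frac{2a^2}{b},
\qquad
c'=-\frac{ac}{b}.
\]
The third relation is then automatic. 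As in \eqref{eq:generic-dimensionless}, we introduce dimensionless variables $U=a^2/b^2$, $K=c/b$, $\eta=\delta^2$ and $L=1-\eta$. Then $c=bK$, $d'=b(K+2U)$ and $c'=-aK$. Here $K$ replaces the parameter $H$, and it is no longer tied to $U$, since $A_x$ need not be nilpotent.

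\emph{Step 1: the trace identities.} We apply \eqref{eq:trace-Q} with $\alpha=1,2$, using \eqref{eq:E1-ab}, \eqref{eq:E2-ab} and the unimodularity relations \eqref{eq:abelian-tau1-real}, \eqref{eq:abelian-imtau2}. Unlike the non-abelian case, $\tau_1$ need not vanish; it is only real. The case $\alpha=1$ yields
\[
\|V_1\|^2
=2\re\bigl\{(E_1)_{11}\overline{\tau_1}+(E_1)_{21}\overline{\tau_2}\bigr\}
=\sqrt2\,b\delta\,\tau_1+\sqrt2\,b\delta'\,\im\tau_2 .
\]
Substituting $\im\tau_2$ from \eqref{eq:abelian-imtau2}, the $\tau_1$ terms should cancel (as happened in Proposition~\ref{prop:abelian-b0}), leaving
\[
\|V_1\|^2=-\tfrac12\,b(c+d')=-b^2(K+U).
\]
So we expect $K\leq -U$. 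Similarly $\|V_2\|^2$ should come out as an explicit expression in $U,K,\eta$, plus possibly a term in $\tau_1$ with coefficient proportional to $a$ or $b\delta$. If such a residual $\tau_1$ term survives, we plan to eliminate it using the off-diagonal trace $\tr Q_{12}=0$, or the block Jacobi identity \eqref{eq:abelian-block-Jacobi} combined with $[Y_1,Y_2]=0$.

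\emph{Step 2: compute $r_1$ and $r_2$ explicitly.} From \eqref{eq:E1-ab} and \eqref{eq:E2-ab}, in terms of $U,K,\eta$:
\[
|(E_1)_{12}|^2=\frac{b^2\bigl(4U\eta+(K+\eta)^2\bigr)}{2L},
\qquad
|(E_1)_{21}|^2=\frac{b^2L}{2},
\qquad
|(E_1)_{11}|^2=\frac{b^2(\eta+U)}{2},
\]
\[
-2\re\bigl\{(E_1)_{21}\overline{(E_2)_{11}}\bigr\}=b^2(K-\eta).
\]
We then form the two inequalities $r_1\leq\|V_1\|^2$ and $r_2\leq\|V_2\|^2$ and reduce both to polynomial inequalities in $(U,K,\eta)$, with $U\geq0$, $0\leq\eta<1$ and $K\leq-U$.

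\emph{Step 3: contradiction.} We aim to show that the three constraints $\|V_1\|^2\geq0$, $\|V_2\|^2\geq0$ and $r_1\leq\|V_1\|^2$ cannot hold together with $r_2\leq\|V_2\|^2$, in the spirit of Step~6 of Proposition~\ref{prop:generic}. Heuristically, $r_1$ contains the positive term $|(E_1)_{12}|^2$ together with $b^2K$. Combined with $\|V_1\|^2=-b^2(K+U)$, the first inequality reads
\[
(K+\eta)^2+4U\eta-L^2+2L\bigl(2K+2U-\eta+\dots\bigr)\leq0 .
\]
This forces $K$ into a bounded negative window. The $r_2$ inequality should then push $K$ in the opposite direction.

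The main obstacle is Step~3, together with the possible residual $\tau_1$ in Step~1. This problem has one more free parameter than the generic non-abelian case, so a clean factorization like $F+H^2-2H(U+1)$ is not guaranteed. If the two diagonal inequalities do not suffice, the first thing to try is to bring in further components of $\widehat R^c=0$. Two candidates are the relations $R^c_{1\bar11\bar2}+R^c_{1\bar12\bar1}=0$ and the trace of $Q_{12}$, both computable from Lemma~\ref{lem:curvature-matrix}. With these, $\|v_1^2\|$ and $\|v_2^1\|$ can be bounded against $\|V_\alpha\|^2$ rather than using the crude bound $\|v_\alpha^\alpha\|\leq\|V_\alpha\|$.
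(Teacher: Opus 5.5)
There is a genuine gap, and it is in your Step~3: the scalar constraints you plan to play off against each other can all hold at once, so no contradiction can come from them. First fix two computational points. There is a sign slip: $-2\re\{(E_1)_{21}\overline{(E_2)_{11}}\}=b^2(\eta-K)$, not $b^2(K-\eta)$. Also, the $\tau_1$-terms cancel in $\|V_2\|^2$ just as they do in $\|V_1\|^2$, giving $\|V_2\|^2=-b^2(K+U)(K+2U+\eta)/L$ with no residual term. With these corrections, $r_1\le\|V_1\|^2$ is exactly $A:=(K+\eta)^2+4U\eta-L^2\le0$, and one finds
\[
\frac{r_2-\|V_2\|^2}{b^2}=\frac{-(\eta+U)A-4(K+U)UL}{2L^2}\ \ge 0 .
\]
Combined with $r_2\le\|V_2\|^2$, this only forces the equality case $A=0$, $(K+U)U=0$, $v_1^2=v_2^1=0$; it does not give a contradiction. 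Concretely, $U=0$, $K=-\tfrac12$, $\eta=\tfrac34$ gives $\|V_1\|^2=\|V_2\|^2=r_1=r_2=b^2/2$. The family $K=-U$, $\eta=(1-U)/2$ with $0\le U\le1$ gives $V_1=V_2=0$ and $r_1=r_2=0$. So the two diagonal inequalities do not push $K$ in opposite directions; they meet on a nonempty boundary.

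The missing idea is how to rule out this boundary. The paper splits into two subcases: $u_0=a/b\neq0$, where $\rho=0$ and $\eta=(1-u_0^2)/2$; and $u_0=0$, where $\eta=(1-\rho)/2$ with $-1<\rho\le0$. In each subcase $E_1,E_2$ reduce to a rigid form. The $V$-contributions to the relevant entries of \eqref{eq:P-block} drop out because $v_1^2=v_2^1=0$. The off-diagonal component $\widehat R^c_{2\bar21\bar2}=\tfrac12(R^c_{2\bar21\bar2}+R^c_{1\bar22\bar2})$ then equals $b^2\delta'(u_0-\I\delta)$, resp.\ $-\I b^2\delta/(2\delta')$, and both are nonzero.

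Your fallback only points vaguely at ``further components'', and the choice matters. What $H^c=0$ actually gives is $\widehat R^c_{1\bar11\bar2}=\tfrac12(R^c_{1\bar11\bar2}+R^c_{1\bar21\bar1})=0$. Your relation $R^c_{1\bar11\bar2}+R^c_{1\bar12\bar1}=0$ says $2\re R^c_{1\bar11\bar2}=0$, which is not forced. Moreover, $\widehat R^c_{1\bar11\bar2}$ vanishes at the point $U=0$, $K=-\tfrac12$, $\eta=\tfrac34$ above, so it cannot finish the argument on its own. Finally, the extra component is not needed to sharpen bounds on $\|v_1^2\|$ and $\|v_2^1\|$, since these already vanish in the equality case. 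Its job is to detect curvature coming purely from the $E$-blocks.
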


\begin{proof}
Assume that $H^c$ is constant.  By
Lemma~\ref{lem:curvature-support}, $H^c=0$, and hence
$Q_{\alpha\beta}=0$ for $\alpha,\beta=1,2$.

Since $b\neq0$, the relations in
\eqref{eq:abelian-relations} give
$c'=-ac/b$ and $d'=c+2a^2/b$.  Put
\[
u_0:=\frac ab,
\qquad
\rho:=\frac{a^2+bc}{b^2},
\qquad
\eta:=\delta^2,
\qquad
L:=1-\eta=(\delta')^2>0.
\]
Then
\begin{equation}\label{eq:bne-parameters}
a=bu_0,
\qquad
c=b(\rho-u_0^2),
\qquad
c'=-bu_0(\rho-u_0^2),
\qquad
d'=b(\rho+u_0^2).
\end{equation}

\medskip
\noindent\emph{Step 1: the norms of $V_1$ and $V_2$.}
Taking the trace of $Q_{11}=0$ and using \eqref{eq:trace-Q},
\[
\|V_1\|^2
=2\re\{(E_1)_{11}\overline{\tau_1}
+(E_1)_{21}\overline{\tau_2}\}.
\]
By \eqref{eq:E1-ab}, \eqref{eq:abelian-tau1-real}, and \eqref{eq:abelian-imtau2}, we have
\[
\|V_1\|^2=-\frac{b(c+d')}{2}.
\]
By \eqref{eq:bne-parameters}, we have
\begin{equation}\label{eq:abelian-V1norm-detailed}
\frac{\|V_1\|^2}{b^2}=-\rho.
\end{equation}
In particular,
\begin{equation}\label{eq:rho-nonpositive}
\rho\leq0.
\end{equation}

Similarly, taking the trace of $Q_{22}=0$ gives
\[
\|V_2\|^2
=2\re\{(E_2)_{12}\overline{\tau_1}
+(E_2)_{22}\overline{\tau_2}\}.
\]
Using \eqref{eq:E2-ab}, \eqref{eq:abelian-tau1-real}, and \eqref{eq:abelian-imtau2}, we obtain
\[
\|V_2\|^2
=-\frac{(d'+b\eta)(c+d')}{2(1-\eta)}.
\]
Using \eqref{eq:bne-parameters} and $c+d'=2b\rho$, we obtain
\begin{equation}\label{eq:abelian-V2norm-detailed}
\frac{\|V_2\|^2}{b^2}
=-\frac{\rho(\rho+u^2+\eta)}{1-\eta}.
\end{equation}
Equations \eqref{eq:abelian-V1norm-detailed} and
\eqref{eq:abelian-V2norm-detailed} are the required norm identities.

\medskip
\noindent\emph{Step 2: the first diagonal curvature inequality.}
Define
\begin{equation}\label{eq:A-def-detailed}
A=\rho^2+2\eta\rho-2\rho u_0^2
+2\eta u_0^2+2\eta+u_0^4-1.
\end{equation}
Substituting \eqref{eq:bne-parameters} into the \eqref{eq:E1-ab} and \eqref{eq:E2-ab}, we get
\begin{align*}
&\frac{|(E_1)_{12}|^2}{b^2}
=\frac{4u_0^2\eta+(\rho-u_0^2+\eta)^2}{2(1-\eta)}, \qquad \quad
\frac{|(E_1)_{21}|^2}{b^2}=\frac{1-\eta}{2},\\
&-\frac{2}{b^2}\re\{(E_1)_{21}\overline{(E_2)_{11}}\}
=-(\rho-u_0^2-\eta),\quad
\frac{|(E_1)_{11}|^2}{b^2}=\frac{u_0^2+\eta}{2}.
\end{align*}
Substituting this into \eqref{eq:r1-general}, we obtain
\begin{equation}\label{eq:r1-A-detailed}
\frac{r_1}{b^2}+\rho=\frac{A}{2(1-\eta)}.
\end{equation}
Since $H^c=0$, equations \eqref{eq:r1-general} and
\eqref{eq:abelian-V1norm-detailed} give
\[
\frac{r_1}{b^2}=\frac{\|v_1^1\|^2}{b^2}
\leq\frac{\|V_1\|^2}{b^2}=-\rho.
\]
Together with \eqref{eq:r1-A-detailed} and $1-\eta>0$, this implies
\begin{equation}\label{eq:A-nonpositive-detailed}
A\leq0.
\end{equation}

\medskip
\noindent\emph{Step 3: the second diagonal curvature inequality.}
Similarly, substituting \eqref{eq:bne-parameters} into
\eqref{eq:E1-ab} and \eqref{eq:E2-ab}, and then using
\eqref{eq:r2-general}, we obtain the purely algebraic identity
\begin{equation}\label{eq:r2-A-detailed}
\frac{r_2}{b^2}
=
\frac{-(\eta+u_0^2)A
-4\rho u_0^2(1-\eta)}
{2(1-\eta)^2}
-
\frac{\rho(\rho+u_0^2+\eta)}{1-\eta}.
\end{equation}
Combining \eqref{eq:r2-A-detailed} with
\eqref{eq:abelian-V2norm-detailed}, we obtain
\begin{equation}\label{eq:abelian-r2-difference-detailed}
\frac{r_2}{b^2}
-\frac{\|V_2\|^2}{b^2}
=
\frac{-(\eta+u_0^2)A
-4\rho u_0^2(1-\eta)}
{2(1-\eta)^2}.
\end{equation}
Since $H^c=0$, equations \eqref{eq:r2-general} and \eqref{eq:abelian-V2norm-detailed} give
\[
\frac{\|v_2^2\|^2-\|V_2\|^2}{b^2}\leq0.
\]
The right hand side is non-negative because
$\eta+u_0^2\geq0$, $A\leq0$, $\rho\leq0$, and $1-\eta>0$.
Consequently both sides vanish.  Since the two summands in the
numerator on the right are separately non-negative, each is zero:
\begin{equation}\label{eq:equality-conditions-detailed}
(\eta+u_0^2)A=0,
\qquad
\rho u_0^2=0.
\end{equation}
Moreover,
$\|v_2^2\|^2=\|V_2\|^2
=\|v_2^1\|^2+\|v_2^2\|^2$, so
\begin{equation}\label{eq:v21-zero}
v_2^1=0.
\end{equation}

If $\eta+u_0^2=0$, then $\eta=u_0=0$.  Equation
\eqref{eq:abelian-V2norm-detailed} becomes
$\|V_2\|^2/b^2=-\rho^2$, hence $\rho=0$.  But then
\eqref{eq:A-def-detailed} gives $A=-1$, and
\eqref{eq:r1-A-detailed} yields $\frac{r_1}{b^2}=-1/2$, contradicting
$\frac{r_1}{b^2}=\|v_1^1\|^2/b^2\geq0$.  Therefore
$\eta+u_0^2>0$, and \eqref{eq:equality-conditions-detailed} gives
\begin{equation}\label{eq:A-zero-detailed}
A=0.
\end{equation}
Equation \eqref{eq:abelian-V1norm-detailed} and \eqref{eq:r1-A-detailed} now imply
$\frac{r_1}{b^2}=-\rho=\|V_1\|^2/b^2$.  Hence
$\|v_1^1\|^2=\|V_1\|^2
=\|v_1^1\|^2+\|v_1^2\|^2$, and therefore
\begin{equation}\label{eq:v12-zero}
v_1^2=0.
\end{equation}
Combining \eqref{eq:v21-zero} and \eqref{eq:v12-zero}, we have
\begin{equation}\label{eq:sparse-V-detailed}
v_1^2=0,
\qquad
v_2^1=0.
\end{equation}

\medskip
\noindent\emph{Step 4: the subcase $u_0\neq0$.}
If $u_0\neq0$, then $\rho=0$ by \eqref{eq:equality-conditions-detailed}.  The equality
$A=0$ factors as
\[
(1+u_0^2)(2\eta+u_0^2-1)=0,
\]
so
\begin{equation}\label{eq:u-nonzero-eta}
\eta=\frac{1-u_0^2}{2}.
\end{equation}
Since $\eta=\delta^2$,
\begin{equation}\label{eq:u-nonzero-delta-relations}
(\delta')^2=1-\delta^2=\frac{1+u_0^2}{2}
=\delta^2+u_0^2.
\end{equation}
Substituting $\rho=0$ into \eqref{eq:bne-parameters}, we get
\begin{equation}\label{eq:u-nonzero-struct-parameters}
a=bu_0,
\qquad c=-bu_0^2,
\qquad c'=bu_0^3,
\qquad d'=bu_0^2.
\end{equation}
Set
\begin{equation}\label{eq:xyz}
x=\frac b{\sqrt2}(\delta+\I u_0),
\quad
y=\frac b{\sqrt2\delta'}
\{-2u_0\delta+\I(\delta^2-u_0^2)\},
\quad
z=\frac{\I b\delta'}{\sqrt2}.
\end{equation}
Substitution in $E_1,E_2$ and use of
\eqref{eq:u-nonzero-delta-relations} give
\begin{equation}\label{eq:u-nonzero-E-matrices}
E_1=\begin{pmatrix}x&y\\z&-x\end{pmatrix},
\qquad
E_2=\begin{pmatrix}-z&x\\\overline x&z\end{pmatrix}.
\end{equation}
Because $z$ is purely imaginary,
\[
E_1^*=\begin{pmatrix}\overline x&-z\\\overline y&-\overline x\end{pmatrix},
\qquad
E_2^*=\begin{pmatrix}z&x\\\overline x&-z\end{pmatrix}.
\]

The upper-left curvature block is given by \eqref{eq:P-block}.  By definition,
\[
R^c_{2\bar2 1\bar2}=(P_{22})_{12},
\qquad
R^c_{1\bar2 2\bar2}=(P_{12})_{22}.
\]
The $V$-contributions vanish because
\[
(V_2^*V_2)_{12}=(v_2^1)^*v_2^2=0,
\qquad
(V_2^*V_1)_{22}=(v_2^2)^*v_1^2=0
\]
by \eqref{eq:sparse-V-detailed}.
For $P_{22}$, formula \eqref{eq:P-block} and \eqref{eq:u-nonzero-E-matrices} give
\[
P_{22}=[E_2,E_2^*]-xE_1^*-zE_2^*
-\overline xE_1+zE_2-V_2^*V_2.
\]
Taking the $(1,2)$-entry and using
\eqref{eq:u-nonzero-E-matrices},
\[
(P_{22})_{12}=-4zx+xz-zx-\overline xy+zx
=-3zx-\overline xy.
\]
Thus
\begin{equation}\label{eq:u-nonzero-R2212}
R^c_{2\bar2 1\bar2}=2b^2\delta'(u_0-\I\delta)
\end{equation}
by \eqref{eq:xyz}. Similarly, formula \eqref{eq:P-block} and \eqref{eq:u-nonzero-E-matrices} give
\[
P_{12}=[E_1,E_2^*]-yE_1^*+xE_2^*
-zE_1-xE_2-V_2^*V_1.
\]
Taking the $(2,2)$-entries and using
\eqref{eq:u-nonzero-E-matrices}, we get
\begin{equation}\label{eq:u-nonzero-R1222}
R^c_{1\bar2 2\bar2}=(P_{12})_{22}=0.
\end{equation}
By \eqref{eq:symmetrization}, we have
$$\widehat R^c_{2\bar2 1\bar2}=\frac12\{R^c_{2\bar2 1\bar2}+R^c_{1\bar2 2\bar2}\}.$$
Using \eqref{eq:u-nonzero-R2212} and
\eqref{eq:u-nonzero-R1222}, we obtain
\begin{equation}\label{eq:cubic-u-nonzero}
\frac{\widehat R^c_{2\bar2 1\bar2}}{b^2}
=\delta'(u_0-\I\delta).
\end{equation}
Its real part is $\delta'u_0\neq0$.  This contradicts $H^c=0$.

\medskip
\noindent\emph{Step 5: the subcase $u_0=0$.}
By \eqref{eq:bne-parameters} and $u_0=0$, we obtain $a=0$, $c=b\rho$, $c'=0$, and $d'=b\rho=c$.  The equality $A=0$
becomes
\[
(\rho+1)(\rho-1+2\eta)=0.
\]
The possibility $\rho=-1$ is excluded by
\eqref{eq:abelian-V2norm-detailed}, which would give
$\|V_2\|^2/b^2=-1$.  Hence
\begin{equation}\label{eq:u-zero-relations}
\delta^2=\eta=\frac{1-\rho}{2},
\qquad
(\delta')^2=\frac{1+\rho}{2},
\qquad
-1<\rho\leq0.
\end{equation}
In particular, $\delta\neq0$.
Set
\[
x=\frac{b\delta}{\sqrt2},
\qquad
z=\frac{\I b\delta'}{\sqrt2},
\qquad
h=\frac{\I b(\rho-\delta^2)}{\sqrt2\delta'}.
\]
Using \eqref{eq:E1-ab}, \eqref{eq:E2-ab} and \eqref{eq:u-zero-relations}, the matrices become
\begin{equation}\label{eq:u-zero-E-matrices}
E_1=\begin{pmatrix}x&z\\z&-x\end{pmatrix},
\qquad
E_2=\begin{pmatrix}h&x\\x&z\end{pmatrix}.
\end{equation}
Here $x$ is real and $h,z$ are purely imaginary.  Again the relevant
$V$-entries vanish by \eqref{eq:sparse-V-detailed}.

For $P_{22}$, taking the $(1,2)$-entry in \eqref{eq:P-block}, and using \eqref{eq:u-zero-E-matrices}, we get
\[
(P_{22})_{12}=2x(h-z).
\]
Since
\[
h-z=\frac{\I b}{\sqrt2\delta'}(\rho-1)
=-\frac{\sqrt2\I b\delta^2}{\delta'},
\]
we obtain
\begin{equation}\label{eq:u-zero-R2212}
R^c_{2\bar2 1\bar2}=(P_{22})_{12}
=-\frac{2\I b^2\delta^3}{\delta'}.
\end{equation}
For $P_{12}$, taking the $(2,2)$-entry of \eqref{eq:P-block} and using \eqref{eq:u-zero-E-matrices}, we get
\[
(P_{12})_{22}=-x(h+z).
\]
Since $h+z=\frac{\sqrt2\I b\rho}{\delta'},$ we find
\begin{equation}\label{eq:u-zero-R1222}
R^c_{1\bar2 2\bar2}=(P_{12})_{22}
=-\frac{\I b^2\delta\rho}{\delta'}.
\end{equation}
Consequently,
\begin{align*}
\frac{\widehat R^c_{2\bar2 1\bar2}}{b^2}=\frac1{2b^2}\{R^c_{2\bar2 1\bar2}+R^c_{1\bar2 2\bar2}\}=-\frac{\I\delta}{2\delta'}(2\delta^2+\rho)=-\frac{\I\delta}{2\delta'}=-\frac{\I\delta(\rho+1)}{4\delta'^3},
\end{align*}
where we used $2\delta^2+\rho=1$ and
$\rho+1=2\delta'^2$.  Since $\delta\neq0$ by \eqref{eq:u-zero-relations} and $\delta'>0$, this
component is non-zero, again contradicting $H^c=0$.

Both subcases are impossible.  Therefore the abelian quotient case
with $b\neq0$ cannot admit constant Chern holomorphic sectional
curvature.
\end{proof}

\subsection{Canonical connections with $\chi>0$}

\begin{proposition}\label{prop:abelian-sign-reduction}
Let $(\mathfrak g,J,g)$ be a unimodular Hermitian Lie algebra of
complex dimension $n\geq3$ containing an abelian ideal
$\mathfrak a$ of real codimension two.  Assume that
$J\mathfrak a\neq\mathfrak a$,
$\mathfrak g/\mathfrak a$ is abelian, and $\chi>0$. If
$H^{D_s^r}=\ka$ is a constant, then
\begin{equation}\label{eq:abelian-W-reduction}
\ka=-\chi\left(
|\langle Z_1w,\overline w\rangle|^2
+|\langle Z_2w,\overline w\rangle|^2\right)
\end{equation}
for every unit vector $w\in\mathcal W=(\mathfrak a_J)^{1,0}$. Consequently, $\ka\leq0$.  If $\ka=0$, then $Z_1=Z_2=0$.
\end{proposition}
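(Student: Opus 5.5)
The plan is to repeat, essentially word for word, the proof of Proposition~\ref{prop:nonab-sign-reduction}, with the non-abelian frame data replaced by the abelian-quotient admissible frame of \cite[Appendix~B]{CaoZhengFV}. The only ingredients used there were three facts:
\begin{itemize}
\item the block form \eqref{eq:D-block}--\eqref{eq:D-p-zero};
\item the torsion formulas $T^q_{1p}=(Z_1)_{pq}$ and $T^q_{2p}=(Z_2)_{pq}$;
\item the vanishing $T^q_{pr}=0$ for $p,q,r\geq3$.
\end{itemize}
The first two are \eqref{eq:abelian-block-form} and \eqref{eq:abelian-torsion-1}--\eqref{eq:abelian-torsion-2}. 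For the third, I will check directly from \eqref{torsion} and \eqref{eq:abelian-C-D-relations}: $C^q_{pr}=0$, and $D^q_{pr}=D^q_{rp}=0$ because $D_r=D_p=0$. Hence $T^q_{pr}=0$. Similarly, $T^*_{pq}=0$ for $p,q\geq3$.

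First I fix a unit vector $w\in\mathcal W$. I then change the unitary basis of $\mathcal W$ so that $e_3=w$, keeping $e_1,e_2$ fixed. Such a change acts on $V_\alpha,Y_\alpha$ by conjugation or multiplication by a unitary matrix of $\mathcal W$. It therefore preserves the block form, preserves $D_p=0$, and transforms $Z_\alpha$ as an endomorphism of $\mathcal W$. So the torsion formulas continue to hold, with $(Z_\alpha)_{33}=\langle Z_\alpha w,\overline w\rangle$.

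Next I evaluate $\widehat v_{3\bar33\bar3}=\sum_a|T^3_{3a}|^2$. The terms with $a\geq3$ vanish by the previous paragraph. Skew-symmetry of $T$ gives $T^3_{3\alpha}=-T^3_{\alpha3}=-(Z_\alpha)_{33}$, so
\[
\widehat v_{w\bar ww\bar w}=|\langle Z_1w,\overline w\rangle|^2+|\langle Z_2w,\overline w\rangle|^2 .
\]
Lemma~\ref{lem:curvature-support} applies because of the block form, and gives $R^c_{3\bar33\bar3}=0$, hence $\widehat R^c_{3\bar33\bar3}=0$. The $(3,3,3,3)$ component of \eqref{eq:master-equation} then reads $-\chi\widehat v_{3\bar33\bar3}=\ka$, which is \eqref{eq:abelian-W-reduction}. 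Since $\chi>0$, this forces $\ka\leq0$.

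Finally suppose $\ka=0$. Then $\langle Z_\alpha w,\overline w\rangle=0$ for every unit $w$, and by homogeneity for every $w\in\mathcal W$. The sesquilinear forms $B_\alpha(u,v)=\langle Z_\alpha u,\overline v\rangle$ therefore vanish on the diagonal. By complex polarization they vanish identically, so $Z_1=Z_2=0$.

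The only point requiring care, and hence the main (mild) obstacle, is confirming that the abelian-frame torsion identities, including $T^*_{pq}=0$, genuinely hold and are preserved under unitary changes within $\mathcal W$. The torsion identities come from \eqref{eq:abelian-C-D-relations} rather than from the block form alone. Everything else is identical to the non-abelian case.
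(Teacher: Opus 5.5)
Your proposal is correct and follows the paper's proof essentially step for step: a unitary change of basis in $\mathcal W$ making $e_3=w$, the computation $\widehat v_{w\bar w w\bar w}=|\langle Z_1w,\overline w\rangle|^2+|\langle Z_2w,\overline w\rangle|^2$ using $T(\mathcal W,\mathcal W)=0$ and skew-symmetry, $R^c_{w\bar ww\bar w}=0$ from Lemma~\ref{lem:curvature-support}, the diagonal component of \eqref{eq:master-equation}, and complex polarization when $\ka=0$. Your explicit verification that $T^*_{pq}=0$ via $C^*_{pq}=0$ and $D_p=D_q=0$ is exactly the justification the paper cites from \eqref{eq:abelian-C-D-relations}.
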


\begin{proof}
Fix a unit vector $w\in\mathcal W$.  Using the admissible frame of
Cao--Zheng \cite[Appendix~B]{CaoZhengFV}, make a unitary change of
basis in $\mathcal W$ so that $e_3=w$, while keeping $e_1$ and
$e_2$ fixed.  Since this change of basis acts only on
$\mathcal W$, the block form \eqref{eq:abelian-block-form}, the
relations \eqref{eq:abelian-C-D-relations}, and the torsion
identities
\eqref{eq:abelian-torsion-1}--\eqref{eq:abelian-torsion-3}
remain valid.

As in \eqref{eq:nonab-vhat-diagonal}, the formula \eqref{eq:vhat-definition} gives
\begin{equation*}
\widehat v_{w\bar w w\bar w}
=
\sum_{a=1}^n
\left|
\left\langle T(w,e_a),\overline w\right\rangle
\right|^2.
\end{equation*}
For $a\geq3$, one has $T(w,e_a)=0$, since
$T(\mathcal W,\mathcal W)=0$ by
\eqref{eq:abelian-C-D-relations}.  Moreover, the skew-symmetry of
the torsion and
\eqref{eq:abelian-torsion-1}--\eqref{eq:abelian-torsion-2} give
\[
\left\langle T(w,e_1),\overline w\right\rangle
=
-\left\langle Z_1w,\overline w\right\rangle,
\qquad
\left\langle T(w,e_2),\overline w\right\rangle
=
-\left\langle Z_2w,\overline w\right\rangle.
\]
Consequently,
\begin{equation}\label{eq:abelian-vhat-Z}
\widehat v_{w\bar w w\bar w}
=
\left|\left\langle Z_1w,\overline w\right\rangle\right|^2
+
\left|\left\langle Z_2w,\overline w\right\rangle\right|^2.
\end{equation}

Since $w=e_3$ in the chosen unitary basis,
Lemma~\ref{lem:curvature-support} gives
\[
\widehat R^c_{w\bar w w\bar w}
=
R^c_{w\bar w w\bar w}
=
0.
\]
For a unit vector $w$, the $(w,\bar w,w,\bar w)$ component of
\eqref{eq:master-equation} is
$\widehat R^c_{w\bar w w\bar w}
-\chi\widehat v_{w\bar w w\bar w}=\ka$.  Combining this with
\eqref{eq:abelian-vhat-Z}, we obtain
\[
\ka
=
-\chi\left(
\left|\left\langle Z_1w,\overline w\right\rangle\right|^2
+
\left|\left\langle Z_2w,\overline w\right\rangle\right|^2
\right),
\]
which proves \eqref{eq:abelian-W-reduction}.  Since $\chi>0$, it
follows immediately that $\ka\leq0$.

Suppose now that $\ka=0$.  Then
$\langle Z_\alpha w,\overline w\rangle=0$ for every unit
$w\in\mathcal W$ and $\alpha=1,2$.  By homogeneity, the same
identity holds for every $w\in\mathcal W$.  Define the sesquilinear
forms
\[
B_\alpha(u,v)
:=
\left\langle Z_\alpha u,\overline v\right\rangle,
\qquad
\alpha=1,2.
\]
Since $B_\alpha(w,w)=0$ for every $w$, the complex polarization
identity gives $B_\alpha(u,v)=0$ for all $u,v\in\mathcal W$.
Therefore $Z_1=Z_2=0$.
\end{proof}
\subsection{The $H^{D_s^r}=0$ case}

\begin{proposition}\label{prop:abelian-zero-canonical}
Let $(\mathfrak g,J,g)$ be a unimodular Hermitian Lie algebra of complex dimension $n\geq3$ containing an abelian ideal
$\mathfrak a$ of real codimension two.  Assume that
$J\mathfrak a\neq\mathfrak a$,
$\mathfrak g/\mathfrak a$ is abelian, and $\chi>0$. If $H^{D_s^r}=0$, then $b=0$ and
$g$ is K\"ahler flat.
\end{proposition}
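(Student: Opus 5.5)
Assume $H^{D_s^r}=0$ with $\chi>0$. By Proposition~\ref{prop:abelian-sign-reduction} we get $Z_1=Z_2=0$, that is, $Y_1=Y_1^*$ and $Y_2-Y_2^*+2\zeta Y_1^*=0$. The first identity in \eqref{eq:unimod-abelian} is then automatic. Taking traces in the second gives the scalar relation $c+d'=0$. The plan is to use the identity \eqref{eq:master-equation} with $\ka=0$ componentwise, in the form $\widehat R^c=\chi\widehat v$, and to play the torsion quantities $T^\beta_{\alpha p}=(v^\beta_\alpha)_p$, $u$ and $\mu_\alpha=T^\alpha_{12}$ against the Chern curvature blocks of Lemma~\ref{lem:curvature-matrix}, much as in Lemma~\ref{lem:nonab-negative-reduction}.

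First I would extract information from the components with three indices in $\mathcal W$. As in Lemma~\ref{lem:nonab-negative-reduction}, Lemma~\ref{lem:curvature-support} gives $\widehat R^c_{\alpha\bar p q\bar r}=0$ and $\widehat R^c_{p\bar1 p\bar1}=\widehat R^c_{p\bar2p\bar2}=0$. Hence $\widehat v$ vanishes on these components. Since $Z_\alpha=0$, the torsion into $\mathcal W$ from $\mathcal W$ is zero, so these equations only involve $u$ and the $v$'s. The computation $\widehat v_{p\bar\alpha p\bar\alpha}=\sum_a T^\alpha_{pa}\overline{T^p_{\alpha a}}$ should give $u=0$, or at least $(v_2^1)_p\overline{u_p}=0$ and $(v_1^2)_p\overline{u_p}=0$.

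Next I would use the diagonal components $R^c_{1\bar11\bar1}=\chi\sum_a|T^1_{1a}|^2$ and $R^c_{2\bar22\bar2}=\chi\sum_a|T^2_{2a}|^2$, together with the traces of $Q_{\alpha\alpha}=\chi(\ldots)$. The latter come from \eqref{eq:Q} and the $\widehat v_{\alpha\bar\alpha p\bar q}$ computation. These replace the identities $Q_{\alpha\beta}=0$ of the Chern case. Writing $Y_1$ Hermitian and $Y_2=H_2+\I S_2$ with $S_2=-\frac{\delta}{\delta'}Y_1$, the trace $\operatorname{tr}[Y_\alpha,Y_\alpha^*]$ vanishes. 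One thus gets a trace identity of the type \eqref{eq:trace-Q}: $\|V_\alpha\|^2-\chi\|v^\alpha_\alpha\|^2$, plus cross terms, equals $2\re\sum_\gamma(E_\alpha)_{\gamma\alpha}\overline{\tau_\gamma}$ plus a $\chi$-multiple of $|\mu|$-terms. Using $c+d'=0$ and \eqref{eq:abelian-relations}, the case $b\neq0$ gives $\rho=(a^2+bc)/b^2$ and $d'-c=2a^2/b$, so $c+d'=2b\rho=0$ forces $\rho=0$. Then $c=-a^2/b$, $d'=a^2/b$, $c'=a^3/b^2$. This is exactly the nilpotent parametrization $U=u_0^2$, with the entries of $E_\alpha$ as in Step~4 of Proposition~\ref{prop:abelian-bnonzero}. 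I would then feed these into the diagonal identities $r_1-\|v_1^1\|^2=\chi\sum_a|T^1_{1a}|^2\ge\chi|\mu_1|^2$ and the analogous one for index $2$, and derive a sign contradiction as in Steps 3--6 of Proposition~\ref{prop:generic}. Sign control here comes from $\|V_\alpha\|^2\ge\|v^\alpha_\alpha\|^2$ and $\chi>0$. If needed I would also use the off-diagonal component $\widehat R^c_{2\bar21\bar2}=\chi\widehat v_{2\bar21\bar2}$, as in Step~4 of that proposition. This proves $b=0$.

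With $b=0$, \eqref{eq:abelian-relations} gives $a=0$ and $c(d'-c)=0$. Combined with $c+d'=0$, this forces $c=d'=0$, so $E_1=0$ and $E_2$ has a single entry proportional to $\I c'$. Then $\mu_1=0$, $R^c_{1\bar11\bar1}=-\|v_1^1\|^2=\chi\|v_1^1\|^2$, and hence $v_1^1=0$. Running the same argument on index $2$ should kill $c'$ and $v_2^2$, and the mixed components then kill the remaining $v$'s. So $E_\alpha=V_\alpha=0$, $Y_1$ is Hermitian and $Y_2$ is Hermitian, and they commute by \eqref{eq:abelian-Y-commute}. In this situation $T=0$: indeed $Z_\alpha=0$, $V=0$, $u=0$ and $\mu=0$. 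So $g$ is Kähler, and $R^c=\widehat R^c$ has zero holomorphic sectional curvature, hence is flat. The main obstacle is the $b\ne0$ elimination in the middle step, which is a delicate polynomial sign analysis in $(u_0,\eta,\chi)$; I expect to need both diagonal inequalities and one off-diagonal component.
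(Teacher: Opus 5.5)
Your outline follows the paper's proof closely. The shared steps are $Z_1=Z_2=0$ and $c+d'=0$, then $\rho=0$ when $b\neq0$, and the diagonal identities $r_\alpha=(1+\chi)\|v_\alpha^\alpha\|^2+\chi|\mu_\alpha|^2$. For $b=0$ you also eliminate $v_1^1$, then $c'$ and $v_2^2$, then $v_1^2,v_2^1$ via the $(1,\bar1,2,\bar2)$ component, as the paper does. That branch and the conclusion $T=0$ are fine. Your flatness argument is a valid shortcut for the paper's check that $[Y_\alpha,Y_\beta^*]=0$: for a K\"ahler metric $R^c$ has the K\"ahler symmetries, so $R^c=\widehat R^c=\widehat R^{D_s^r}=0$ because $\widehat v=0$. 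Drop the remark that $Y_2$ is Hermitian. $Z_2=0$ only gives $Y_2^*=Y_2+2\zeta Y_1$, which fails to be Hermitian unless $\delta Y_1=0$, and you never use it.

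The gap is the case $b\neq0$. It is the only nontrivial part of the proposition, and you leave it as an anticipated ``delicate sign analysis''. Pointing to Steps 3--6 of Proposition~\ref{prop:generic} does not close it. That contradiction was driven by $H=\sigma/b>0$ and $\|V_1\|^2=2b^2H$. Your $\rho=0$ situation is exactly that parametrization with $H=0$, where the estimate collapses to $0\leq0$.

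The missing observation is this. At $\rho=0$, the purely algebraic identities \eqref{eq:r1-A-detailed} and \eqref{eq:r2-A-detailed} reduce to the following (equivalently, take \eqref{eq:generic-r1} and \eqref{eq:generic-r2-expanded} at $H=0$, since the $E_\alpha$ of the two sections coincide at $\sigma=0$):
\[
\frac{r_1}{b^2}=\frac{A_0}{2L},\qquad
\frac{r_2}{b^2}=-\frac{(\eta+u_0^2)A_0}{2L^2},\qquad
A_0=u_0^4+2\eta u_0^2+2\eta-1,\quad L=(\delta')^2 .
\]
So $r_1$ and $r_2$ are opposite-signed multiples of the same quantity. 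You also need
\[
\mu_1=-(E_2)_{11}+(E_1)_{21}=\frac{\I b(1+u_0^2)}{\sqrt2\,\delta'}\neq0 .
\]
Then $r_1\geq\chi|\mu_1|^2>0$ gives $A_0>0$. This forces $\eta+u_0^2>0$, since otherwise $A_0=-1$. Hence $r_2<0$, contradicting $r_2\geq0$.

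None of the extra machinery you anticipated is needed: traces of $Q_{\alpha\alpha}$, bounds through $\|V_\alpha\|^2\geq\|v_\alpha^\alpha\|^2$, or an off-diagonal component. Likewise the three-index $\mathcal W$ components, which are vacuous once $Z_1=Z_2=0$. With this step inserted, your argument is the paper's.
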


\begin{proof}
Let $\mu_\alpha:=T^\alpha_{12}$.  By Proposition~\ref{prop:abelian-sign-reduction}, one has
$Z_1=Z_2=0$.  In particular, $\tr Z_2=0$.  The second identity in
\eqref{eq:unimod-abelian} therefore gives
$\I(c+d')/(\sqrt2\,\delta')=0$.  Since $\delta'>0$, we obtain
\begin{equation}\label{eq:abelian-zero-unimod}
d'=-c.
\end{equation}
Since $Z_1=Z_2=0$,
\eqref{eq:vhat-definition} and
\eqref{eq:abelian-torsion-3} give
$\widehat v_{\alpha\bar\alpha\alpha\bar\alpha}
=\|v_\alpha^\alpha\|^2+|\mu_\alpha|^2$.
Moreover, \eqref{eq:r1-general} and \eqref{eq:r2-general} give
$R^c_{\alpha\bar\alpha\alpha\bar\alpha}
=r_\alpha-\|v_\alpha^\alpha\|^2$.  Therefore the diagonal
components of \eqref{eq:master-equation}, with $\ka=0$, yield
\begin{equation}\label{eq:abelian-zero-diagonal}
r_\alpha
=
(1+\chi)\|v_\alpha^\alpha\|^2
+\chi|\mu_\alpha|^2
\geq0,
\qquad
\alpha=1,2.
\end{equation}

Suppose that $b\neq0$.  The second relation in
\eqref{eq:abelian-relations}, together with
\eqref{eq:abelian-zero-unimod}, gives
$b(-2c)=2a^2$, and hence
\begin{equation}\label{eq:abelian-zero-rho}
a^2+bc=0.
\end{equation}
Put
\[
u_0:=\frac{a}{b},
\qquad
\rho:=\frac{a^2+bc}{b^2},
\qquad
\eta:=\delta^2,
\qquad
L:=1-\eta=(\delta')^2>0.
\]
Thus $\rho=0$.

Specializing the purely algebraic identities
\eqref{eq:r1-A-detailed} and \eqref{eq:r2-A-detailed} to
$\rho=0$, we obtain
\begin{equation}\label{eq:abelian-zero-r12}
\frac{r_1}{b^2}
=
\frac{A_0}{2L},
\qquad
\frac{r_2}{b^2}
=
-\frac{(\eta+u_0^2)A_0}{2L^2},
\qquad A_0:=2\eta u_0^2+2\eta+u_0^4-1.
\end{equation}

If $\eta+u_0^2=0$, then $\eta=u_0=0$, and hence $A_0=-1$ and
$L=1$.  The first identity in \eqref{eq:abelian-zero-r12} would
then give $r_1=-b^2/2<0$, contradicting
\eqref{eq:abelian-zero-diagonal}.  Thus $\eta+u_0^2>0$.

Since $L>0$, while \eqref{eq:abelian-zero-diagonal} gives $r_1\geq0$, the first identity in
\eqref{eq:abelian-zero-r12} gives $A_0\geq0$.  Since
$\eta+u_0^2>0$, $L>0$, and $r_2\geq0$, the second identity gives $A_0\leq0$.  Therefore $A_0=0$, and hence $r_1=r_2=0$.

We next compute $\mu_1$.  By \eqref{eq:abelian-C12},
$C^1_{12}=0$.  By the definition of $D_\alpha=(D^j_{i\alpha})$, equation
\eqref{eq:abelian-block-form} gives
$D^1_{12}=(E_2)_{11}$ and
$D^1_{21}=(E_1)_{21}$.  Hence
\eqref{torsion}, together with
\eqref{eq:E1-ab} and \eqref{eq:E2-ab}, yields
\[
\mu_1
=
-C^1_{12}-D^1_{12}+D^1_{21}
=
-(E_2)_{11}+(E_1)_{21}
=
\frac{\I(b-c)}{\sqrt2\,\delta'}.
\]
By \eqref{eq:abelian-zero-rho}, $c=-a^2/b=-bu_0^2$, and therefore
\[ \mu_1 = \frac{\I b(1+u_0^2)}{\sqrt2\,\delta'} \neq0, \]
since $b\neq0$ and $1+u_0^2>0$. On the other hand, \eqref{eq:abelian-zero-diagonal} and $r_1=0$ give \[ \chi|\mu_1|^2=0. \]
Since $\chi>0$, we obtain $\mu_1=0$, which contradicts $\mu_1\neq0$. Consequently, the case $b\neq0$ is impossible, and hence \begin{equation}\label{eq} b=0. \end{equation}

We now compute the remaining structure constants.  The second
relation in \eqref{eq:abelian-relations} gives $a=0$.  Using
$d'=-c$ from \eqref{eq:abelian-zero-unimod}, the third relation
in \eqref{eq:abelian-relations} becomes
$c(-2c)=0$.  Hence
\[
a=b=c=d'=0.
\]
Substituting these identities into \eqref{eq:E1-ab} and
\eqref{eq:E2-ab}, we obtain
\[
E_1=0,
\qquad
E_2=
\begin{pmatrix}
0&\dfrac{\I c'}{\sqrt2\,(\delta')^2}\\[2mm]
0&0
\end{pmatrix}.
\]
Since $E_1=0$ and $(E_2)_{11}=0$, the formula
\eqref{eq:r1-general} gives $r_1=0$.  Moreover, by
\eqref{eq:abelian-C12}, one has $C^1_{12}=0$, while
\eqref{eq:abelian-block-form} gives
$D^1_{12}=(E_2)_{11}$ and $D^1_{21}=(E_1)_{21}$.  Hence the
Chern torsion formula \eqref{torsion} yields
\[
\mu_1
=
T^1_{12}
=
-C^1_{12}-D^1_{12}+D^1_{21}
=
-C^1_{12}-(E_2)_{11}+(E_1)_{21}
=
0.
\]
Consequently, the first identity in
\eqref{eq:abelian-zero-diagonal} gives
$(1+\chi)\|v_1^1\|^2=0$.  Since $\chi>0$, we obtain $$v_1^1=0.$$
Similarly, \eqref{eq:r2-general} gives
\[
\begin{aligned}
r_2
&=
|(E_2)_{21}|^2-|(E_2)_{12}|^2
-2|(E_2)_{22}|^2
-2\operatorname{Re}
 \bigl((E_2)_{12}\overline{(E_1)_{22}}\bigr)=
 -\frac{(c')^2}{2(\delta')^4}.
\end{aligned}
\]
By \eqref{eq:abelian-C12} and \eqref{torsion},
\[
\mu_2
=
-C^2_{12}-D^2_{12}+D^2_{21}
=
-(E_2)_{12}+(E_1)_{22}
=
-\frac{\I c'}{\sqrt2\,(\delta')^2}.
\]
Thus the second equation in \eqref{eq:abelian-zero-diagonal}
becomes
\[
-\frac{(c')^2}{2(\delta')^4}
=
(1+\chi)\|v_2^2\|^2
+\chi\frac{(c')^2}{2(\delta')^4},
\]
or equivalently,
\[
0
=
(1+\chi)
\left(
\|v_2^2\|^2
+\frac{(c')^2}{2(\delta')^4}
\right).
\]
Since $1+\chi>0$, we obtain $c'=0$ and $v_2^2=0$.  Consequently,
$$E_1=E_2=\mu_2=0.$$

Since $v_1^1=v_2^2=0$, write
$V_1=(0,v_1^2)$ and $V_2=(v_2^1,0)$.  As
$E_1=E_2=0$, formula \eqref{eq:P-block} gives
$P_{\alpha\beta}=-V_\beta^*V_\alpha$.  Since
$(P_{\alpha\beta})_{\gamma\delta}
=R^c_{\alpha\bar\beta\gamma\bar\delta}$, we obtain
\[
R^c_{1\bar1 2\bar2}=-\|v_1^2\|^2,
\qquad
R^c_{2\bar2 1\bar1}=-\|v_2^1\|^2,
\]
and
$R^c_{2\bar1 1\bar2}
=R^c_{1\bar2 2\bar1}=0$.
Consequently, \eqref{eq:symmetrization} yields
\[
4\widehat R^c_{1\bar1 2\bar2}
=
-\|v_1^2\|^2-\|v_2^1\|^2.
\]
On the other hand, taking $(i,j,k,\ell)=(1,1,2,2)$ in
\eqref{eq:vhat-definition} gives
\[
4\widehat v_{1\bar1 2\bar2}
=
\sum_a\left(
T^1_{1a}\overline{T^2_{2a}}
+|T^1_{2a}|^2
+|T^2_{1a}|^2
+T^2_{2a}\overline{T^1_{1a}}
\right).
\]
Since $v_1^1=v_2^2=\mu_1=\mu_2=0$, the first and fourth sums
vanish.  By \eqref{eq:abelian-torsion-3}, the remaining two sums
are $\|v_2^1\|^2$ and $\|v_1^2\|^2$, respectively.  Hence
\[
4\widehat v_{1\bar1 2\bar2}
=
\|v_1^2\|^2+\|v_2^1\|^2.
\]
The $(1,\bar1,2,\bar2)$ component of
\eqref{eq:master-equation}, with $\ka=0$, therefore yields
\[
0
=
-(1+\chi)
\bigl(\|v_1^2\|^2+\|v_2^1\|^2\bigr).
\]
Since $\chi>0$, we conclude that $v_1^2=v_2^1=0$.  Thus
\begin{equation}\label{eq:abelian-zero-EV}
E_1=E_2=V_1=V_2=0.
\end{equation}

Equations
\eqref{eq:abelian-torsion-1}--\eqref{eq:abelian-torsion-3},
together with $Z_1=Z_2=0$ and $V_1=V_2=0$, give
$T^q_{\alpha p}=T^\beta_{\alpha p}=0$.  Moreover, the first line
of \eqref{eq:abelian-C-D-relations} gives
$C^j_{pq}=D^j_{pq}=D^j_{qp}=0$ for every $j$ and $p,q\geq3$;
hence \eqref{torsion} yields $T^j_{pq}=0$.  Finally,
\eqref{eq:abelian-u} gives $T^p_{12}=0$, while
$\mu_\alpha=T^\alpha_{12}=0$ for $\alpha=1,2$.  By skew-symmetry, all components of the Chern torsion vanish. Thus $T=0$, and $g$ is K\"ahler.

Finally, we prove flatness.  By \eqref{eq:abelian-Y-commute} we have
\begin{equation*}
[Y_1,Y_2]=0.
\end{equation*}
By \eqref{eq:abelian-Z-definitions}, $Z_1=0$ gives
$Y_1^*=Y_1$, while $Z_2=0$ gives
$Y_2^*=Y_2+2\zeta Y_1$.  Combining these identities with
\eqref{eq:abelian-Y-commute}, we obtain
\[
[Y_1,Y_1^*]
=
[Y_1,Y_2^*]
=
[Y_2,Y_1^*]
=
[Y_2,Y_2^*]
=
0.
\]
Hence $[Y_\alpha,Y_\beta^*]=0$. By \eqref{eq:abelian-zero-EV}, one has
\[
D_\alpha
=
\begin{pmatrix}
0&0\\
0&Y_\alpha
\end{pmatrix},
\qquad \alpha=1,2.
\]
Since $E_1=E_2=0$, the formula
\eqref{eq:curvature-matrix} becomes
\[
\mathcal R_{\alpha\bar\beta}
=
[D_\alpha,D_\beta^*]
=
\begin{pmatrix}
0&0\\
0&[Y_\alpha,Y_\beta^*]
\end{pmatrix}=0.
\]
Hence $\mathcal R_{\alpha\bar\beta}=0$ for
$\alpha,\beta\in\{1,2\}$.  Equivalently,
$R^c_{\alpha\bar\beta k\bar\ell}=0$ for all
$1\leq k,\ell\leq n$. All remaining Chern curvature
components vanish by Lemma~\ref{lem:curvature-support}.  Hence
$R^c=0$.

Since $g$ is K\"ahler, its Chern, Bismut, and Levi--Civita
connections coincide.  Therefore every canonical metric
connection coincides with the flat Chern connection, so
$R^{D_s^r}=0$ and $g$ is K\"ahler flat, completing the proof.
\end{proof}

\subsection{The negative constant case}

\begin{lemma}\label{lem:abelian-negative-normal-form}
Let $(\mathfrak g,J,g)$ be a unimodular Hermitian Lie algebra of
complex dimension $n\geq3$ containing an abelian ideal
$\mathfrak a$ of real codimension two.  Assume that
$J\mathfrak a\neq\mathfrak a$ and that
$\mathfrak g/\mathfrak a$ is abelian.  If $\chi>0$ and
$H^{D_s^r}=\ka<0$, put $m=n-2$ and
$\mathcal W:=(\mathfrak a_J)^{1,0}$.  Then there exists
$\beta\in\R\setminus\{0\}$ such that
\begin{equation}\label{eq:abelian-negative-normal-form}
Z_1=0,
\qquad Z_2=2\I\beta I_m,
\qquad \ka=-4\chi\beta^2,
\qquad
\beta=-\frac{c+d'}{2\sqrt2\,m\delta'},
\qquad \zeta=\I\delta/\delta'.
\end{equation}
\end{lemma}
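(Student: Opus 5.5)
The proof will mirror that of Lemma~\ref{lem:nonab-negative-normal-form}, replacing the non-abelian unimodularity identity \eqref{eq:unimod-nonab} with \eqref{eq:unimod-abelian}. We start from Proposition~\ref{prop:abelian-sign-reduction}, which gives
\[
\ka=-\chi\bigl(|\langle Z_1w,\overline w\rangle|^2+|\langle Z_2w,\overline w\rangle|^2\bigr)
\]
for every unit vector $w\in\mathcal W$.

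Next, decompose $Y_\alpha=H_\alpha+\I S_\alpha$ into Hermitian parts. Then
\[
Z_1=2\I S_1,\qquad Z_2=2\frac{\delta}{\delta'}S_1+2\I\Bigl(S_2+\frac{\delta}{\delta'}H_1\Bigr).
\]
Exactly as before, using $1+\delta^2/\delta'^2=1/\delta'^2$, the reduction becomes the circle identity \eqref{eq:nonab-circle} with constant $-\ka/(4\chi)>0$. Lemma~\ref{lem:circle-rigidity} then gives $S_1=\delta'\alpha_0 I_m$ and $S_2+\frac{\delta}{\delta'}H_1=\beta I_m$ for some real $\alpha_0,\beta$.

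To kill $\alpha_0$, we use the first unimodularity identity $\tr Z_1=0$ from \eqref{eq:unimod-abelian}. This replaces the trace argument via $[Y_1,Y_2]=\lambda Y_1$, which is not available here since now $[Y_1,Y_2]=0$. It gives $2\I m\delta'\alpha_0=0$, hence $\alpha_0=0$ and therefore $S_1=0$ and $Z_1=0$. Substituting back into the circle identity gives $\beta^2=-\ka/(4\chi)$, so $\ka=-4\chi\beta^2$ and $\beta\neq0$ because $\ka<0$. Moreover $Z_2=2\I\beta I_m$.

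Finally, to identify $\beta$, take the trace of $Z_2$ and use the second identity in \eqref{eq:unimod-abelian}:
\[
2\I m\beta=\tr Z_2=-\frac{\I(c+d')}{\sqrt2\,\delta'},
\]
which yields $\beta=-(c+d')/(2\sqrt2\,m\delta')$. The relation $\zeta=\I\delta/\delta'$ is simply the definition recalled in \eqref{eq:abelian-C-D-relations}.

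The only point requiring care is that, unlike the non-abelian case, we cannot conclude $Y_1=0$: no nilpotence is available, and $Y_1=H_1$ merely turns out to be Hermitian. This is why the statement records only $Z_1=0$ and not $Y_1=0$. The value of $\beta$ must therefore come from \eqref{eq:unimod-abelian}, which is already phrased in terms of $\tr Z_2$. This sidesteps the problem, since $\tr Z_2$ includes the $2\zeta\overline{\tau_1}$ term automatically.
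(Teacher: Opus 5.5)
Your proposal is correct and follows the paper's proof essentially step for step. The common steps are the Hermitian decomposition of $Y_\alpha$, the circle identity together with Lemma~\ref{lem:circle-rigidity}, eliminating $\alpha_0$ via $\tr Z_1=0$ from \eqref{eq:unimod-abelian} (rather than the nilpotence/trace argument of the non-abelian case), and reading off $\beta$ from the second identity in \eqref{eq:unimod-abelian}.
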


\begin{proof}
Write
\[
Y_\alpha=H_\alpha+\I S_\alpha,
\qquad
H_\alpha:=\frac12(Y_\alpha+Y_\alpha^*),
\qquad
S_\alpha:=\frac1{2\I}(Y_\alpha-Y_\alpha^*),
\]
for $\alpha=1,2$.  Thus $H_\alpha$ and $S_\alpha$ are Hermitian.
Since $\zeta=\I\delta/\delta'$, the definitions in
\eqref{eq:abelian-Z-definitions} give
\[
Z_1=2\I S_1,
\qquad
Z_2=
2\frac{\delta}{\delta'}S_1
+
2\I\left(S_2+\frac{\delta}{\delta'}H_1\right).
\]

For every unit vector
$w\in\mathcal W=(\mathfrak a_J)^{1,0}$, the Hermitian property of
$S_1$ and
$S_2+\frac{\delta}{\delta'}H_1$ implies that
$\langle S_1w,\overline w\rangle$ and
$\langle(S_2+\frac{\delta}{\delta'}H_1)w,\overline w\rangle$
are real. Hence
\[
\left|\langle Z_1w,\overline w\rangle\right|^2
=
4\langle S_1w,\overline w\rangle^2
\]
and
\[
\left|\langle Z_2w,\overline w\rangle\right|^2
=
4\frac{\delta^2}{(\delta')^2}
\langle S_1w,\overline w\rangle^2
+
4\left\langle
\left(S_2+\frac{\delta}{\delta'}H_1\right)w,\overline w
\right\rangle^2.
\]
Since
$1+\delta^2/(\delta')^2=1/(\delta')^2$,
equation \eqref{eq:abelian-W-reduction} becomes
\begin{equation}\label{eq:abelian-circle}
\left\langle\frac{S_1}{\delta'}w,\overline w\right\rangle^2
+
\left\langle
\left(S_2+\frac{\delta}{\delta'}H_1\right)w,\overline w
\right\rangle^2
=
-\frac{\ka}{4\chi}
\end{equation}
for every unit vector $w\in\mathcal W$.

Applying Lemma~\ref{lem:circle-rigidity} to the Hermitian matrices
$S_1/\delta'$ and
$S_2+\frac{\delta}{\delta'}H_1$, we obtain
$\alpha_0,\beta\in\R$ such that
\begin{equation}\label{eq:abelian-S-scalar}
\frac{S_1}{\delta'}=\alpha_0I_m,
\qquad
S_2+\frac{\delta}{\delta'}H_1=\beta I_m.
\end{equation}
By the first identity in \eqref{eq:unimod-abelian},
$\tr Z_1=0$.  On the other hand,
\eqref{eq:abelian-S-scalar} gives
$Z_1=2\I\delta'\alpha_0I_m$, and hence
$0=\tr Z_1=2\I m\delta'\alpha_0$.  Since $m\geq1$ and
$\delta'>0$, it follows that $\alpha_0=0$.  Thus
$S_1=0$ and $Z_1=0$.  The second identity in
\eqref{eq:abelian-S-scalar} then gives
\[
Z_2
=
2\frac{\delta}{\delta'}S_1
+
2\I\left(S_2+\frac{\delta}{\delta'}H_1\right)
=
2\I\beta I_m.
\]
Substituting $S_1=0$ and
$S_2+\frac{\delta}{\delta'}H_1=\beta I_m$ into
\eqref{eq:abelian-circle}, and using $|w|=1$, yields
$\beta^2=-\ka/(4\chi)$.  Therefore
$\ka=-4\chi\beta^2$.  Since $\ka<0$ and $\chi>0$, one has
$\beta\neq0$.

Finally, the second identity in \eqref{eq:unimod-abelian} gives
\[
0
=
\tr Z_2+\frac{\I(c+d')}{\sqrt2\,\delta'}
=
2\I m\beta+\frac{\I(c+d')}{\sqrt2\,\delta'}.
\]
Consequently,
\[
\beta
=
-\frac{c+d'}{2\sqrt2\,m\delta'}.
\]
This proves \eqref{eq:abelian-negative-normal-form}.
\end{proof}
\begin{lemma}\label{lem:abelian-negative-reduction}
Let $(\mathfrak g,J,g)$ be a unimodular Hermitian Lie algebra of
complex dimension $n\geq3$ containing an abelian ideal
$\mathfrak a$ of real codimension two.  Assume that
$J\mathfrak a\neq\mathfrak a$, that
$\mathfrak g/\mathfrak a$ is abelian, and that a canonical metric
connection $D_s^r$ with $\chi>0$ has constant holomorphic
sectional curvature $H^{D_s^r}=\ka<0$.  Put
$\mathcal W:=(\mathfrak a_J)^{1,0}$ and
$m:=\dim_{\C}\mathcal W=n-2$.  Use the normal form
$Z_1=0$ and $Z_2=2\I\beta I_m$ from
Lemma~\ref{lem:abelian-negative-normal-form}, and let $u$ be the
vector defined in \eqref{eq:abelian-u}.  Then
\[
u=v_2^1=v_2^2=0,
\qquad
v_1^2=2\zeta v_1^1,
\qquad \zeta=\I\delta/\delta'.
\]
In particular,
\begin{equation}\label{eq:abelian-V-negative}
V_1=(v_1^1,v_1^2),
\qquad
v_1^2=2\zeta v_1^1,
\qquad
V_2=0.
\end{equation}
\end{lemma}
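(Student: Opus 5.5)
The plan is to repeat the proof of Lemma~\ref{lem:nonab-negative-reduction} almost verbatim. That proof used only four ingredients: the block form, the vanishing of the Chern curvature components with an index in $\mathcal W$ (Lemma~\ref{lem:curvature-support}), the torsion formulas, and the normal form $Z_1=0$, $Z_2=2\I\beta I_m$ with $\beta\neq0$. None of these involves the non-abelian Jacobi relation or the value of $C^\alpha_{12}$. In the present setting the corresponding inputs are \eqref{eq:abelian-block-form}, \eqref{eq:abelian-torsion-1}--\eqref{eq:abelian-torsion-3}, \eqref{eq:abelian-u}, and Lemma~\ref{lem:abelian-negative-normal-form}. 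We also need $T^\ast_{pq}=0$ for $p,q\geq3$, which follows from the first line of \eqref{eq:abelian-C-D-relations} together with \eqref{torsion}.

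\emph{Step 1: the vectors $v_2^1$, $v_2^2$.} Fix $\alpha\in\{1,2\}$ and $p,q,r\geq3$. Lemma~\ref{lem:curvature-support} and \eqref{eq:symmetrization} give $\widehat R^c_{\alpha\bar p q\bar r}=0$. The Kronecker terms in \eqref{eq:master-equation} vanish because $\alpha\neq p,r$. Since $\chi>0$, we get $\widehat v_{\alpha\bar pq\bar r}=0$. Now expand \eqref{eq:vhat-definition}. The summands with $a=1$ contain a factor $T^q_{p1}=-(Z_1)_{pq}=0$, and those with $a\geq3$ contain a factor $T^\ast_{pq}=0$. Only $a=2$ survives, where
\[
T^p_{\alpha2}=\delta_{\alpha1}u_p,\qquad T^p_{q2}=-2\I\beta\delta_{pq},\qquad T^\alpha_{r2}=-(v_2^\alpha)_r .
\]
This yields the same identity as \eqref{eq:nonab-three-W}. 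Setting $q=r=p$ and using $\beta\neq0$ gives $v_2^2=0$ (from $\alpha=2$) and $v_2^1=-\overline u$ (from $\alpha=1$).

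\emph{Step 2: the vector $u$.} For $p\geq3$, again $\widehat R^c_{p\bar1p\bar1}=0$ and $\delta_{p1}=0$, so $\widehat v_{p\bar1p\bar1}=0$. Expanding, $\widehat v_{p\bar1p\bar1}=\sum_aT^1_{pa}\overline{T^p_{1a}}$. The $a=1$ term vanishes since $T^p_{11}=0$, and the $a\geq3$ terms vanish since $Z_1=0$. Using $T^1_{p2}=-(v_2^1)_p$ and $T^p_{12}=u_p$, this leaves $-(v_2^1)_p\overline{u_p}=\overline{u_p}^{\,2}$. Hence $u=0$, and therefore $v_2^1=0$.

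\emph{Step 3: the relation for $V_1$.} With $u=0$ and $v_2^1=0$, formula \eqref{eq:abelian-u} gives $\overline{v_1^2}=-2\zeta\overline{v_1^1}$. Because $\zeta=\I\delta/\delta'$ is purely imaginary, conjugating gives $v_1^2=2\zeta v_1^1$, and $V_2=0$ follows.

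The main point to check carefully is Step 1. There we must verify that the abelian-quotient torsion identities, via \eqref{eq:abelian-C-D-relations}, still give $T^\ast_{pq}=0$ and $T^\alpha_{2p}=(v_2^\alpha)_p$. We must also confirm that the index bookkeeping in \eqref{eq:vhat-definition} reproduces \eqref{eq:nonab-three-W} exactly, with the coefficients $2\I\beta$ and the signs arranged so that the diagonal choice $q=r=p$ isolates $v_2^2$ and the combination $u_p+\overline{(v_2^1)_p}$.
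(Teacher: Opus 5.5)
Your proposal is correct and follows the paper's proof step for step. The vanishing of $\widehat v_{\alpha\bar p q\bar r}$ gives $v_2^2=0$ and $v_2^1=-\overline u$; the vanishing of $\widehat v_{p\bar1p\bar1}$ then forces $\overline{u_p}^{\,2}=0$; and \eqref{eq:abelian-u} yields $v_1^2=2\zeta v_1^1$. The points you flag also check out: $T^\ast_{pq}=0$ and $T^\alpha_{2p}=(v_2^\alpha)_p$ do follow from \eqref{eq:abelian-C-D-relations}, and the index bookkeeping reproduces \eqref{eq:nonab-three-W} exactly, as the paper records in \eqref{eq:abelian-three-W}.
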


\begin{proof}
By Lemma~\ref{lem:abelian-negative-normal-form},
$\beta\neq0$, $Z_1=0$, and $Z_2=2\I\beta I_m$.

Fix $\alpha\in\{1,2\}$ and $p,q,r\geq3$.  By
Lemma~\ref{lem:curvature-support} and
\eqref{eq:symmetrization}, one has
$\widehat R^c_{\alpha\bar p q\bar r}=0$.  Together with $\delta_{\alpha p}\delta_{qr}
+\delta_{\alpha r}\delta_{qp}=0$, the formula
\eqref{eq:master-equation} becomes
$-\chi\widehat v_{\alpha\bar p q\bar r}=0$.  Since $\chi>0$,
\[
\widehat v_{\alpha\bar p q\bar r}=0.
\]

Taking $(i,j,k,\ell)=(\alpha,p,q,r)$ in
\eqref{eq:vhat-definition}, we obtain
\[
\begin{aligned}
0=4\widehat v_{\alpha\bar p q\bar r}
=\sum_a\bigl(
&T^p_{\alpha a}\overline{T^q_{ra}}
+T^p_{qa}\overline{T^\alpha_{ra}}+T^r_{\alpha a}\overline{T^q_{pa}}
+T^r_{qa}\overline{T^\alpha_{pa}}
\bigr).
\end{aligned}
\]
Equations
\eqref{eq:abelian-torsion-1}--\eqref{eq:abelian-torsion-3},
the identities $Z_1=0$ and $Z_2=2\I\beta I_m$, and the
skew-symmetry of the torsion give
\[
T^p_{\alpha2}=\delta_{\alpha1}u_p,
\qquad
T^p_{q2}=-2\I\beta\delta_{pq},
\qquad
T^\alpha_{r2}=-(v_2^\alpha)_r.
\]
Furthermore, \eqref{eq:abelian-C-D-relations} and
\eqref{torsion} give $T^j_{pq}=0$ for every $j$ and
$p,q\geq3$.  Thus the summands with $a=1$ and $a\geq3$ vanish,
and only $a=2$ contributes.  Therefore
\begin{equation}\label{eq:abelian-three-W}
\begin{aligned}
0=4\widehat v_{\alpha\bar p q\bar r}
=2\I\beta\bigl[
&\delta_{\alpha1}
 (u_p\delta_{qr}+u_r\delta_{qp})+\delta_{pq}\overline{(v_2^\alpha)_r}
+\delta_{rq}\overline{(v_2^\alpha)_p}
\bigr].
\end{aligned}
\end{equation}

Fix $p\geq3$ and put $q=r=p$.  Taking $\alpha=2$ in
\eqref{eq:abelian-three-W} gives
$0=4\I\beta\overline{(v_2^2)_p}$, and hence
$(v_2^2)_p=0$.  Taking $\alpha=1$ gives
$0=4\I\beta\bigl(u_p+\overline{(v_2^1)_p}\bigr)$, so
$(v_2^1)_p=-\overline{u_p}$.  Since $p$ is arbitrary,
\begin{equation}\label{eq:abelian-v2-u}
v_2^2=0,
\qquad
v_2^1=-\overline u.
\end{equation}

For $p\geq3$,
Lemma~\ref{lem:curvature-support} gives
$\widehat R^c_{p\bar1p\bar1}=0$, because $\delta_{p1}=0$.
Thus \eqref{eq:master-equation} and $\chi>0$ imply
$\widehat v_{p\bar1p\bar1}=0$.

Taking $(i,j,k,\ell)=(p,1,p,1)$ in
\eqref{eq:vhat-definition}, the four summands coincide, and hence
\[
\widehat v_{p\bar1p\bar1}
=
\sum_aT^1_{pa}\overline{T^p_{1a}}.
\]
The term with $a=1$ vanishes because $T^p_{11}=0$, while the
terms with $a\geq3$ vanish because
$T^p_{1a}=(Z_1)_{ap}=0$.  Thus only $a=2$ contributes.  By
\eqref{eq:abelian-torsion-3}, the definition of $u$, and
skew-symmetry,
\[
\widehat v_{p\bar1p\bar1}
=
T^1_{p2}\overline{T^p_{12}}
=
-(v_2^1)_p\overline{u_p}.
\]
Using \eqref{eq:abelian-v2-u}, we obtain
$0=\overline{u_p}^{\,2}$, and hence $u_p=0$. Since $p\geq3$ is arbitrary, $u=0$.  Together with
\eqref{eq:abelian-v2-u}, this gives
\[
u=v_2^1=v_2^2=0.
\]

Finally, \eqref{eq:abelian-u} gives $\overline{v_1^2}=-2\zeta\overline{v_1^1}$.  Since
$\overline\zeta=-\zeta$, taking complex conjugates gives $v_1^2=2\zeta v_1^1$.  This complete the proof.
\end{proof}

For $\alpha\in\{1,2\}$, set
\begin{equation}\label{eq:abelian-negative-auxiliary}
\mu_\alpha:=T^\alpha_{12},
\qquad
B_0:=\frac{b\delta'}{\sqrt2},
\qquad
D_0:=\frac{d'+b\delta^2}{\sqrt2\,\delta'},
\qquad
\xi:=\frac{b\delta+\I a}{\sqrt2}.
\end{equation}

By \eqref{eq:abelian-C12}, \eqref{torsion}, and
\eqref{eq:E1-ab}--\eqref{eq:E2-ab}, one has
\begin{equation}\label{eq:abelian-mu-formulas}
\mu_1
=
\frac{\I(b-c)}{\sqrt2\,\delta'},
\qquad
\mu_2
=
-\frac{\delta(d'+b)+\I(a+c')}
{\sqrt2\,(\delta')^2}.
\end{equation}
Indeed,
$\mu_1=-C^1_{12}-(E_2)_{11}+(E_1)_{21}$ and
$\mu_2=-C^2_{12}-(E_2)_{12}+(E_1)_{22}$.

\begin{lemma}\label{lem:abelian-negative-Q}
Let $(\mathfrak g,J,g)$ be a unimodular Hermitian Lie algebra of
complex dimension $n\geq3$ containing an abelian ideal
$\mathfrak a$ of real codimension two.  Assume that
$J\mathfrak a\neq\mathfrak a$, that
$\mathfrak g/\mathfrak a$ is abelian, and that a canonical metric
connection $D_s^r$ with $\chi>0$ has constant holomorphic
sectional curvature $H^{D_s^r}=\ka<0$.  Put
$\mathcal W:=(\mathfrak a_J)^{1,0}$ and
$m:=\dim_{\C}\mathcal W=n-2$.  Use the relations
\[
Z_1=0,\qquad
Z_2=2\I\beta I_m,\qquad
V_1=(v_1^1,v_1^2),\qquad
v_1^2=2\zeta v_1^1,\qquad
V_2=0,
\qquad \zeta=\I\delta/\delta'
\]
from Lemmas~\ref{lem:abelian-negative-normal-form} and
\ref{lem:abelian-negative-reduction}.  Then the lower-right Chern
curvature blocks satisfy
\begin{equation}\label{eq:abelian-Chern-Q-negative}
\begin{aligned}
Q_{11}
&=
v_1^1(v_1^1)^*
+v_1^2(v_1^2)^*
-2B_0\beta I_m,\\
Q_{22}
&=-2D_0\beta I_m,\\
Q_{12}
&=-2\I\beta\xi I_m.
\end{aligned}
\end{equation}
Moreover, for $\alpha,\beta\in\{1,2\}$ and $p,q\geq3$, the
corresponding components of \eqref{eq:master-equation} yield the following matrix identities:
\begin{align}
Q_{11}
&=
\chi\left\{
v_1^1(v_1^1)^*
-\bigl(8\beta^2
+4\beta\operatorname{Im}\mu_1\bigr)I_m
\right\},
\label{eq:abelian-Q11-negative}\\
Q_{22}
&=
\chi\left\{
v_1^2(v_1^2)^*-4\beta^2I_m
\right\},
\label{eq:abelian-Q22-negative}\\
Q_{12}
&=
\chi\left\{
v_1^2(v_1^1)^*
+2\I\beta\mu_2I_m
\right\}.
\label{eq:abelian-Q12-negative}
\end{align}
\end{lemma}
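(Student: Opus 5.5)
The proof will run parallel to that of Lemma~\ref{lem:nonab-negative-Q}. The only difference is that the Jacobi-type input is now $[Y_1,Y_2]=0$ from \eqref{eq:abelian-Y-commute} instead of $[Y_1,Y_2]=\lambda Y_1$. For that reason we cannot conclude $Y_1=0$ directly, and must track $Y_1$ explicitly.

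\emph{Step 1: the Chern side.} We read off from \eqref{eq:E1-ab} and \eqref{eq:E2-ab} that
\[
(E_1)_{11}=\xi,\quad (E_1)_{21}=\I B_0,\quad (E_1)_{22}=-\xi,\quad (E_2)_{21}=\overline\xi,\quad (E_2)_{22}=\I D_0 .
\]
From Lemma~\ref{lem:abelian-negative-normal-form} we have $S_1=0$, so $Y_1=Y_1^*$ is Hermitian. Then $Z_2=2\I\beta I_m$ gives $Y_2^*=Y_2-2\I\beta I_m+2\zeta Y_1$. We substitute these into \eqref{eq:Q}, using $V_2=0$. The sums over $\gamma$ produce terms in $Y_1$, for instance $-(E_1)_{11}Y_1^*-\overline{(E_1)_{11}}Y_1=-2\re(\xi)Y_1$. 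These must cancel against the $Y_1$-terms coming from $\I B_0 Y_2^*-\I B_0Y_2$ and from the commutators $[Y_\alpha,Y_\beta^*]$.

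To make this cancellation visible, we expand the commutators using $[Y_1,Y_2]=0$ and $Y_1^*=Y_1$. This gives $[Y_1,Y_1^*]=0$ and $[Y_2,Y_2^*]=2\zeta[Y_2,Y_1]=0$. It also gives $[Y_1,Y_2^*]=2\zeta[Y_1,Y_1]=0$ and $[Y_2,Y_1^*]=0$, so all commutators vanish.

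The remaining linear $Y_1$-terms carry coefficients such as $\I B_0\cdot 2\zeta-2\re\xi$. With $\zeta=\I\delta/\delta'$ and $B_0=b\delta'/\sqrt2$, this equals $-\sqrt2 b\delta-\sqrt2 b\delta=-2\sqrt2 b\delta$. That is not obviously zero, so I expect to have to show $Y_1=0$ separately. The plan is to use the first unimodularity identity $\tr Z_1=0$ together with the $(\alpha,\bar p,q,\bar r)$ computation in Lemma~\ref{lem:abelian-negative-reduction}. Alternatively, I would check whether the $Y_1$-contributions in $Q_{\alpha\beta}$ combine to $(E_\alpha)_{1\beta}(\ldots)$ expressions that cancel identically once $Y_2^*$ is substituted.

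This cancellation is the main obstacle. If it does not occur, the stated formulas \eqref{eq:abelian-Chern-Q-negative} presuppose $Y_1=0$. In that case I would derive $Y_1=0$ from the block Jacobi identity \eqref{eq:abelian-block-Jacobi} combined with $V_2=0$, or from comparing traces in the torsion side.

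Once the $Y_1$-terms are disposed of, the surviving scalar terms are obtained as follows. The pairing $-\I B_0 Y_2^*+\I B_0 Y_2=-2B_0\beta I_m$ gives the scalar term in $Q_{11}$, and $-2D_0\beta I_m$ in $Q_{22}$ arises in the same way. For $Q_{12}$ we use $-(E_1)_{22}Y_2^*-\overline{(E_2)_{21}}Y_2=\xi(Y_2^*-Y_2)=-2\I\beta\xi I_m$. The rank-one part $V_1V_1^*$ gives $v_1^1(v_1^1)^*+v_1^2(v_1^2)^*$ in $Q_{11}$ and vanishes in the other two blocks, since $V_2=0$.

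\emph{Step 2: the torsion side.} This is identical to the nonabelian case. The torsion entries are
\[
T^q_{1p}=0,\quad T^q_{2p}=2\I\beta\delta_{pq},\quad T^\gamma_{1p}=(v_1^\gamma)_p,\quad T^\gamma_{2p}=0,\quad T^p_{12}=u_p=0,\quad T^\gamma_{12}=\mu_\gamma,\quad T^*_{pq}=0 .
\]
Substituting these into \eqref{eq:vhat-definition} reproduces \eqref{eq:nonab-vhat-Q}. By Lemma~\ref{lem:curvature-support} we have $R^c_{\alpha\bar\beta p\bar q}=4\widehat R^c_{\alpha\bar\beta p\bar q}$. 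We then apply \eqref{eq:master-equation} with $\ka=-4\chi\beta^2$ and specialize to $(\alpha,\beta)=(1,1),(2,2),(1,2)$, using $\mu-\overline\mu=2\I\im\mu$. This yields \eqref{eq:abelian-Q11-negative}--\eqref{eq:abelian-Q12-negative}.
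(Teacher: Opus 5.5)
Your Step~2 (the torsion side) matches the paper. Step~1, however, has a genuine gap: you never show that the linear $Y_1$-terms drop out of $Q_{11}$, $Q_{22}$ and $Q_{12}$.

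The coefficient you computed for $Q_{11}$ has a sign error. In \eqref{eq:Q} the $\gamma=2$ contribution to $Q_{11}$ is $-(E_1)_{21}Y_2^*-\overline{(E_1)_{21}}Y_2=-\I B_0Y_2^*+\I B_0Y_2$. This is the sign you use later, not $\I B_0Y_2^*-\I B_0Y_2$. Substituting $Y_2^*=Y_2-2\I\beta I_m+2\zeta Y_1$ gives $-2B_0\beta I_m-2\I B_0\zeta Y_1$. Since $-2\I B_0\zeta=\sqrt2\,b\delta=2\re\xi$, the total $Y_1$-coefficient is $-2\re\xi-2\I B_0\zeta=0$, not $-2\sqrt2\,b\delta$.

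You also skip the same check in the other two blocks, where you silently drop the $Y_1$-part of $Y_2^*-Y_2$.
\begin{itemize}
\item In $Q_{22}$ the coefficient of $Y_1$ is $-2\re\bigl((E_2)_{12}\bigr)-2\I D_0\zeta$. It vanishes because \eqref{eq:E2-ab} gives $2\re\bigl((E_2)_{12}\bigr)=2D_0\delta/\delta'=-2\I D_0\zeta$.
\item In $Q_{12}$ the coefficient is $-(E_1)_{12}-\overline{(E_2)_{11}}+2\zeta\xi$. It vanishes because $(E_1)_{12}+\overline{(E_2)_{11}}=(2\I b\delta^2-2a\delta)/(\sqrt2\,\delta')=2\zeta\xi$.
\end{itemize}
This identical cancellation, together with the vanishing of all $[Y_\alpha,Y_\beta^*]$ (which you did establish), is exactly how the paper obtains \eqref{eq:abelian-Chern-Q-negative}.

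Your fallback of proving $Y_1=0$ separately is not available. In the abelian-quotient case the Jacobi input is $[Y_1,Y_2]=0$, so Lemma~\ref{lem:commutator-nilpotent} gives no nilpotence. The identity $\tr Z_1=0$ is vacuous once $Y_1=Y_1^*$. With $V_2=0$, the block identity \eqref{eq:abelian-block-Jacobi} reduces to $V_1E_2=Y_2V_1$, which does not involve $Y_1$. Nothing available at this stage forces $Y_1=0$.

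So as written, your argument does not yield the three Chern-side formulas. The missing ingredient is the correct direct computation showing that each $Y_1$-coefficient vanishes identically from \eqref{eq:E1-ab} and \eqref{eq:E2-ab}.
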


\begin{proof}
Since $Z_1=0$, equation
\eqref{eq:abelian-Z-definitions} gives $Y_1^*=Y_1$.  By \eqref{eq:abelian-Y-commute} we have
$[Y_1,Y_2]=0$.
Since $Y_1^*=Y_1$, the identity $Z_2=2\I\beta I_m$ is equivalent
to
\[
Y_2^*=Y_2-2\I\beta I_m+2\zeta Y_1.
\]
Hence, $[Y_1,Y_2^*]=[Y_1,Y_2]-2\I\beta[Y_1,I_m]+2\zeta[Y_1,Y_1]
=0,\,[Y_2,Y_2^*]=-2\I\beta[Y_2,I_m]+2\zeta[Y_2,Y_1]=0.$ Thus
\[
[Y_1,Y_1^*]
=
[Y_1,Y_2^*]
=
[Y_2,Y_1^*]
=
[Y_2,Y_2^*]
=
0.
\]

By \eqref{eq:E1-ab}, \eqref{eq:E2-ab}, and
\eqref{eq:abelian-negative-auxiliary}, one has
\[
(E_1)_{11}=\xi,\qquad
(E_1)_{21}=\I B_0,\qquad
(E_2)_{22}=\I D_0,\qquad
(E_1)_{22}=-\xi,\qquad
\overline{(E_2)_{21}}=\xi.
\]

Taking $(\alpha,\beta)=(1,1)$ in \eqref{eq:Q}, and using
$Y_1^*=Y_1$ and $[Y_1,Y_1^*]=0$, we obtain
\begin{align*}
Q_{11}
&=
V_1V_1^*
-(E_1)_{11}Y_1-(E_1)_{21}Y_2^*
-\overline{(E_1)_{11}}Y_1
-\overline{(E_1)_{21}}Y_2\\
&=
V_1V_1^*
-(\xi+\overline\xi)Y_1
+\I B_0(Y_2-Y_2^*).
\end{align*}
Since $Z_2=2\I\beta I_m$ and $Y_1^*=Y_1$, equation
\eqref{eq:abelian-Z-definitions} gives
$Y_2-Y_2^*=2\I\beta I_m-2\zeta Y_1$.  Hence
\[
Q_{11}
=
V_1V_1^*-2B_0\beta I_m
+\bigl(-\xi-\overline\xi-2\I B_0\zeta\bigr)Y_1.
\]
By \eqref{eq:abelian-negative-auxiliary}, we have
\[
\xi+\overline\xi=\sqrt2\,b\delta,
\qquad
-2\I B_0\zeta=\sqrt2\,b\delta.
\]
Thus the coefficient of $Y_1$ vanishes.  Since
$V_1=(v_1^1,v_1^2)$, we conclude that
\[
Q_{11}
=
v_1^1(v_1^1)^*
+v_1^2(v_1^2)^*
-2B_0\beta I_m.
\]
Similarly, formula \eqref{eq:Q} gives
\[
\begin{aligned}
Q_{22}
={}&
-2\operatorname{Re}\bigl((E_2)_{12}\bigr)Y_1
-\I D_0Y_2^*+\I D_0Y_2\\
={}&
-2D_0\beta I_m
+\left(
-2\operatorname{Re}\bigl((E_2)_{12}\bigr)
-2\I D_0\zeta
\right)Y_1.
\end{aligned}
\]
By \eqref{eq:E2-ab},
$2\operatorname{Re}((E_2)_{12})
=2D_0\delta/\delta'$, whereas
$-2\I D_0\zeta=2D_0\delta/\delta'$.  Thus the $Y_1$-coefficient
vanishes, and $Q_{22}=-2D_0\beta I_m$.

For the mixed block, the relations
$V_2=0$ and $[Y_1,Y_2^*]=0$ yield
\[
\begin{aligned}
Q_{12}
={}&
-(E_1)_{12}Y_1
-\overline{(E_2)_{11}}Y_1
+\xi(Y_2^*-Y_2)\\
={}&
-2\I\beta\xi I_m
+\left(
-(E_1)_{12}
-\overline{(E_2)_{11}}
+2\zeta\xi
\right)Y_1.
\end{aligned}
\]
Directly from \eqref{eq:E1-ab} and \eqref{eq:E2-ab},
\[
(E_1)_{12}
+\overline{(E_2)_{11}}
=
2\zeta\xi.
\]
Hence the $Y_1$-term again vanishes, and
$Q_{12}=-2\I\beta\xi I_m$.  This proves
\eqref{eq:abelian-Chern-Q-negative}.

For $\gamma\in\{1,2\}$ and $p,q\geq3$, equations
\eqref{eq:abelian-torsion-1}--\eqref{eq:abelian-torsion-3}
and Lemma~\ref{lem:abelian-negative-reduction} give
\[
T^q_{1p}=0,\qquad
T^q_{2p}=2\I\beta\delta_{pq},\qquad
T^\gamma_{1p}=(v_1^\gamma)_p,\qquad
T^\gamma_{2p}=0,
\]
together with $T^\gamma_{12}=\mu_\gamma$ and
$T^p_{12}=0$.  Substitution in
\eqref{eq:vhat-definition} gives
\begin{equation}\label{eq:abelian-vhat-Q}
\begin{aligned}
4\widehat v_{\alpha\bar\beta p\bar q}
={}&
(v_1^\beta)_p\overline{(v_1^\alpha)_q}
+4\beta^2\delta_{\alpha2}\delta_{\beta2}\delta_{pq}+
2\I\beta
\bigl(
\delta_{\alpha1}\mu_\beta
-\delta_{\beta1}\overline{\mu_\alpha}
\bigr)\delta_{pq}.
\end{aligned}
\end{equation}
By the definition of $Q_{\alpha\beta}$,
$(Q_{\alpha\beta})_{pq}
=R^c_{\alpha\bar\beta p\bar q}$.  Moreover,
Lemma~\ref{lem:curvature-support} and
\eqref{eq:symmetrization} give
\[
R^c_{\alpha\bar\beta p\bar q}
=
4\widehat R^c_{\alpha\bar\beta p\bar q}.
\]
Thus the corresponding component of
\eqref{eq:master-equation} becomes
\[
(Q_{\alpha\beta})_{pq}
=
4\chi\widehat v_{\alpha\bar\beta p\bar q}
+2\ka\delta_{\alpha\beta}\delta_{pq}.
\]
Since $\ka=-4\chi\beta^2$, this is
\[
(Q_{\alpha\beta})_{pq}
=
4\chi\widehat v_{\alpha\bar\beta p\bar q}
-8\chi\beta^2\delta_{\alpha\beta}\delta_{pq}.
\]

Taking $(\alpha,\beta)=(1,1)$ in
\eqref{eq:abelian-vhat-Q}, and using
$\mu_1-\overline{\mu_1}
=2\I\operatorname{Im}\mu_1$, gives
\[
4\widehat v_{1\bar1p\bar q}
=
(v_1^1)_p\overline{(v_1^1)_q}
-4\beta\operatorname{Im}\mu_1\,\delta_{pq}.
\]
Combining the two equations above yields
\eqref{eq:abelian-Q11-negative}. The choices
$(\alpha,\beta)=(2,2)$ and $(1,2)$ similarly give
\eqref{eq:abelian-Q22-negative} and
\eqref{eq:abelian-Q12-negative}, respectively.
\end{proof}

\begin{lemma}\label{lem:abelian-m-one}
Under the hypotheses of
Lemma~\ref{lem:abelian-negative-Q}, assume that $n=3$.  Then
$v_1^1=v_1^2=0$.
\end{lemma}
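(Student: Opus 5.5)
The plan follows the template of Lemma~\ref{lem:nonab-m-one}, adapted to the abelian-quotient data \eqref{eq:E1-ab}--\eqref{eq:E2-ab}, \eqref{eq:abelian-relations} and \eqref{eq:abelian-block-Jacobi}. Since $m=1$, the quantities $v_1^1,v_1^2,Y_1,Y_2$ are scalars. We also have $Y_1=\overline{Y_1}$ (from $Z_1=0$), $V_2=0$ and $u=0$.

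\emph{Step 1: reduce to $(E_1)_{12}=0$.} Consider the component $\widehat R^{D_s^r}_{1\bar21\bar3}$, which equals zero. Using $Z_1=0$, $u=0$ and $V_2=0$, every product in $\widehat v_{1\bar21\bar3}$ vanishes, so $\widehat R^c_{1\bar21\bar3}=0$. By Lemma~\ref{lem:curvature-support} this gives $R^c_{1\bar21\bar3}=0$. From \eqref{eq:curvature-matrix}, the upper-right block of $\mathcal R_{1\bar2}$ is
\[E_1V_2^*-V_2^*Y_1-\sum_\gamma (E_1)_{\gamma2}V_\gamma^*=-(E_1)_{12}V_1^*.\]
Unlike the non-abelian case, $Y_1$ need not vanish here, but the term containing it carries $V_2^*=0$ and so drops out. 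The $(1,3)$ entry then gives $(E_1)_{12}\overline{v_1^1}=0$. I will also use the $(2,3)$ entry $R^c_{1\bar22\bar3}=-(E_1)_{12}\overline{v_1^2}$, although it gives nothing new because $v_1^2=2\zeta v_1^1$. So assume $v_1^1\neq0$; then $(E_1)_{12}=0$.

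\emph{Step 2: translate $(E_1)_{12}=0$ into the parameters.} It means $a\delta=0$ and $c+b\delta^2=0$.

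If $b\neq0$, write $\rho,u_0$ as before. If $\delta=0$, then $c=0$, so $\rho=u_0^2$ and $d'=c+2a^2/b=2bu_0^2$. If $a=0$, then $c=-b\delta^2$, $c'=0$ and $d'=c$.

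If $b=0$, then $a=0$ by \eqref{eq:abelian-relations}. The condition gives $c=0$, and $d'$, $c'$ remain free.

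\emph{Step 3: use the block Jacobi identity.} With $V_2=0$, \eqref{eq:abelian-block-Jacobi} becomes $V_1E_2=Y_2V_1$, a scalar eigen-relation. Its first component is
\[v_1^1\bigl((E_2)_{11}+2\zeta(E_2)_{21}\bigr)=Y_2v_1^1,\]
which determines $Y_2$. The condition $Y_2-\overline{Y_2}=2\I\beta-2\zeta Y_1$ from $Z_2$ then relates $\beta$ to the structure constants. This must be compared with $\beta=-(c+d')/(2\sqrt2\,\delta')$ from Lemma~\ref{lem:abelian-negative-normal-form}.

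The second component yields a real/imaginary splitting, exactly as in the degenerate case of Lemma~\ref{lem:nonab-m-one}, forcing things like $c'=0$ or $\delta=0$. In the subcases where $\zeta=0$, we get $v_1^2=0$. Then comparing \eqref{eq:abelian-Chern-Q-negative} with \eqref{eq:abelian-Q22-negative} gives $D_0=2\chi\beta$. Comparing $Q_{11}$ with \eqref{eq:abelian-Q11-negative} gives
\[|v_1^1|^2(1-\chi)=2B_0\beta-\chi(8\beta^2+4\beta\,\im\mu_1),\]
and $Q_{12}$ gives $\xi=-\chi\mu_2$, using \eqref{eq:abelian-mu-formulas}. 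I expect each subcase to produce a sign contradiction with $\chi>0$, $\beta\neq0$, of the type $d'=-2\chi\ldots$ found there.

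\emph{Main obstacle.} The hard part is the case bookkeeping. In the abelian quotient the trace condition no longer forces $Y_1=0$, and $A_x$ need not be nilpotent, so there are more branches: $b=0$ versus $b\ne0$, and within $b\neq0$ the split between $\delta=0$ and $a=0$. In particular, $Y_1$ enters the first component of the Jacobi relation only through $\zeta Y_1$ in $Z_2$. I would first try to show $\delta=0$ in every branch using the second Jacobi component together with $Q_{12}$. With $\zeta=0$, the $Q_{22}$ and $Q_{12}$ scalar identities should then yield the contradiction, and $v_1^2=2\zeta v_1^1$ gives $v_1^2=0$ once $v_1^1=0$.
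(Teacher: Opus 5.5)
\textbf{Genuine gap.} Your Steps 1 and 2 match the paper's opening: the $(1,\bar2,1,\bar3)$ component gives $(E_1)_{12}\overline{v_1^1}=0$, and $(E_1)_{12}=0$ means $a\delta=0$, $c=-b\delta^2$. Step 3, however, is only a forecast, and the strategy you announce for it fails in one branch.

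Take $b\neq0$, $a=0$, $\delta\neq0$. Then $c'=0$ and $d'=c=-b\delta^2$, so $(E_2)_{12}=(E_2)_{22}=0$ and $(E_2)_{11}+2\zeta(E_2)_{21}=0$.
\begin{itemize}
\item The first component of $V_1E_2=Y_2V_1$ gives $Y_2=0$.
\item The $Z_2$ condition then only fixes $Y_1=\beta\delta'/\delta$. This is compatible with $\beta=b\delta^2/(\sqrt2\,\delta')$, so comparing with the normal-form formula for $\beta$ gives no contradiction.
\item The second component reads $0=0$.
\end{itemize}
So $\delta=0$ cannot be extracted from the Jacobi identity in this branch. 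Here $\zeta\neq0$, and the rank-one terms cannot be dropped as in your ``$\zeta=0$, then scalar identities'' plan. Nor does $Q_{12}$ suffice by itself: with $\mu_2=-\xi$ it only yields $\chi|v_1^1|^2=(\chi-1)b^2\delta^2/2$, which is not contradictory when $\chi>1$.

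The missing idea is to combine the rank-one terms of $Q_{22}$ and $Q_{12}$. In this branch $D_0=0$, so the Chern side of $Q_{22}$ vanishes. Then \eqref{eq:abelian-Q22-negative} forces $|v_1^2|^2=4\beta^2$, i.e.\ $|v_1^1|^2=b^2\delta^2/2$. Consequently the right side of \eqref{eq:abelian-Q12-negative} is $\chi\bigl(2\zeta|v_1^1|^2+2\I\beta\mu_2\bigr)=0$. The Chern side, however, is $-2\I\beta\xi=-\I b^2\delta^3/\delta'\neq0$. This is how the paper concludes.

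The other branches are $b=0$ (where the second component does give $c'=0$, and then $\delta=0$ because $d'\neq0$) and $b\neq0$, $\delta=0$. There the paper's contradiction is simpler than the one you anticipate. The first component gives $Y_2=0$, so $Z_2=Y_2-\overline{Y_2}+2\zeta Y_1=0$, against $\beta\neq0$. Your alternative $D_0=2\chi\beta$ would also close these branches: with $\delta=c=0$ it reads $d'(1+\chi)=0$, forcing $\beta=0$. But as written you only say you ``expect'' a sign contradiction, so none of the branches is actually settled.
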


\begin{proof}
Since $n=3$, one has $m=1$, so $v_1^1$, $v_1^2$, $Y_1$, and
$Y_2$ are complex numbers.  Since $H^{D_s^r}=\ka$ is constant,
\[
\widehat R^{D_s^r}_{1\bar2 1\bar3}
=
\frac{\ka}{2}
\bigl(\delta_{12}\delta_{13}+\delta_{13}\delta_{12}\bigr)
=
0.
\]
Moreover, Lemma~\ref{lem:abelian-negative-reduction} gives
$u=V_2=0$, while
Lemma~\ref{lem:abelian-negative-normal-form} gives $Z_1=0$.
Substitution in \eqref{eq:vhat-definition}, using
\eqref{eq:abelian-torsion-1}--\eqref{eq:abelian-u}, gives
$\widehat v_{1\bar2 1\bar3}=0$.  Hence the corresponding
component of \eqref{eq:master-equation} yields
$\widehat R^c_{1\bar2 1\bar3}=0$.

By \eqref{eq:curvature-matrix}, the upper-right block of
$\mathcal R_{1\bar2}$ is
\[
E_1V_2^*-V_2^*Y_1
-\sum_{\gamma=1}^2(E_1)_{\gamma2}V_\gamma^*.
\]
Indeed, the upper-right block of $[D_1,D_2^*]$ is
$E_1V_2^*-V_2^*Y_1$, while the upper-right block of
$-\sum_\gamma(E_1)_{\gamma2}D_\gamma^*$ is
$-\sum_\gamma(E_1)_{\gamma2}V_\gamma^*$; the remaining sum has
zero upper-right block.  Since $V_2=0$, this becomes
$-(E_1)_{12}V_1^*$.  When $m=1$,
$V_1^*=(\overline{v_1^1},\overline{v_1^2})^{\mathsf T}$, and
therefore
\[
R^c_{1\bar2 1\bar3}
=
-(E_1)_{12}\overline{v_1^1}.
\]
Thus $(E_1)_{12}\overline{v_1^1}=0$.

Suppose that $v_1^1\neq0$.  Then
$(E_1)_{12}=0$.  We consider separately the cases $b=0$ and
$b\neq0$.

\smallskip
\noindent
\emph{Case 1: $b=0$.}
The second relation in \eqref{eq:abelian-relations} gives $a=0$.
Since
$(E_1)_{12}=\I c/(\sqrt2\,\delta')$ by \eqref{eq:E1-ab}, we also have $c=0$.
Formula \eqref{eq:abelian-negative-normal-form} now gives
$\beta=-d'/(2\sqrt2\,\delta')$.  Since $\beta\neq0$, one has
$d'\neq0$.

With $a=b=c=0$, formula \eqref{eq:E2-ab} becomes
\[
E_2=
\begin{pmatrix}
0&
\dfrac{\I c'+\delta d'}{\sqrt2\,(\delta')^2}\\[2mm]
0&
\dfrac{\I d'}{\sqrt2\,\delta'}
\end{pmatrix}.
\]
Since $V_2=0$, equation \eqref{eq:abelian-block-Jacobi} becomes
$V_1E_2=Y_2V_1$. Here
$V_1=(v_1^1,2\zeta v_1^1)$.  The first component of this
row-vector identity gives $Y_2v_1^1=0$, and hence $Y_2=0$.
The second component then gives
$\I c'=\delta d'$.  Since $c',\delta,d'$ are real, both sides
must vanish; thus $c'=0$ and $\delta d'=0$.  Since $d'\neq0$, we get $\delta=0$, so $\zeta=i \frac{\delta}{\delta'}=0$.  Consequently,
$Z_2=Y_2-\overline{Y_2}+2\zeta Y_1=0$, contradicting
$Z_2=2\I\beta$ and $\beta\neq0$ by Lemma \ref{lem:abelian-negative-Q}.

\smallskip
\noindent
\emph{Case 2: $b\neq0$.}
By \eqref{eq:E1-ab}, the identity $(E_1)_{12}=0$ gives
$a\delta=0$ and $c=-b\delta^2$.

Suppose first that $\delta=0$.  Then $c=0$, and the first relation
in \eqref{eq:abelian-relations} gives $c'=0$.  Moreover,
$\zeta=0$ and $V_1=(v_1^1,0)$ by Lemma \ref{lem:abelian-negative-Q}.  By \eqref{eq:E2-ab}, the first
row of $E_2$ vanishes, so the first component of
$V_1E_2=Y_2V_1$ gives $Y_2=0$.  Hence
$Z_2=Y_2-\overline{Y_2}=0$, again contradicting
$Z_2=2\I\beta$ by Lemma \ref{lem:abelian-negative-Q}.

It remains to consider $\delta\neq0$.  Then $a=0$, and
\eqref{eq:abelian-relations} gives
$c'=0$ and $d'=c=-b\delta^2$.  In this case,
\[
(E_2)_{11}
=
-\frac{\sqrt2\,\I b\delta^2}{\delta'},
\qquad
(E_2)_{21}
=
\frac{b\delta}{\sqrt2},
\qquad
(E_2)_{12}=(E_2)_{22}=0.
\]
The first component of $V_1E_2=Y_2V_1$ is
\[
\bigl((E_2)_{11}
+2\zeta(E_2)_{21}-Y_2\bigr)v_1^1=0.
\]
Since
$(E_2)_{11}+2\zeta(E_2)_{21}=0$ and $v_1^1\neq0$, we obtain
$Y_2=0$.

Equations \eqref{eq:abelian-negative-normal-form},
\eqref{eq:abelian-negative-auxiliary}, and
\eqref{eq:abelian-mu-formulas} now give
\[
\beta=\frac{b\delta^2}{\sqrt2\,\delta'},
\qquad
\xi=\frac{b\delta}{\sqrt2},
\qquad
\mu_2=-\xi,
\qquad
D_0=0.
\]
Hence \eqref{eq:abelian-Chern-Q-negative} gives $Q_{22}=0$.
Comparing with \eqref{eq:abelian-Q22-negative}, and using
$v_1^2=2\zeta v_1^1$, yields
\[
|v_1^1|^2=\frac{b^2\delta^2}{2}.
\]
Thus
\[
v_1^2\overline{v_1^1}+2\I\beta\mu_2
=
2\zeta|v_1^1|^2+2\I\beta\mu_2
=
0.
\]
Thus \eqref{eq:abelian-Q12-negative} gives $Q_{12}=0$.
On the other hand,
\eqref{eq:abelian-Chern-Q-negative} gives
$Q_{12}=-2\I\beta\xi\neq0$, since
$b\delta\neq0$.  This is a contradiction.

Both cases are impossible.  Therefore $v_1^1=0$.  Since
$v_1^2=2\zeta v_1^1$, one also has $v_1^2=0$. This
completes the proof.
\end{proof}

\begin{lemma}\label{lem:abelian-scalar-system}
Under the hypotheses of
Lemma~\ref{lem:abelian-negative-Q}, put
$\mathcal W:=(\mathfrak a_J)^{1,0}$ and
$m:=\dim_{\C}\mathcal W=n-2$.  Then
\begin{equation}\label{eq:abelian-scalar-system}
B_0
=
2\chi\bigl(2\beta+\operatorname{Im}\mu_1\bigr),
\qquad
D_0=2\chi\beta,
\qquad
\xi=-\chi\mu_2.
\end{equation}
\end{lemma}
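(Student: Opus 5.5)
The plan mirrors the proof of Lemma~\ref{lem:nonab-scalar-system}: compare, on suitable vectors of $\mathcal W$, the two expressions for each block $Q_{\alpha\beta}$ from Lemma~\ref{lem:abelian-negative-Q}. The Chern side is \eqref{eq:abelian-Chern-Q-negative}; the torsion side is \eqref{eq:abelian-Q11-negative}--\eqref{eq:abelian-Q12-negative}. Each block is a scalar multiple of $I_m$ plus rank-one terms built from $v_1^1$ and $v_1^2=2\zeta v_1^1$. So we only need to kill the rank-one terms and read off the scalar coefficients, using $\beta\neq0$ from Lemma~\ref{lem:abelian-negative-normal-form}.

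\emph{Case $m\geq2$.} Since $m\geq2$, the orthogonal complement of $v_1^1$ in $\C^m$ is nonzero, so we may choose a unit vector $w\in\C^m$ with $(v_1^1)^*w=0$. The relation $v_1^2=2\zeta v_1^1$ then gives $(v_1^2)^*w=0$. Hence every rank-one operator $v_1^1(v_1^1)^*$, $v_1^2(v_1^2)^*$, $v_1^2(v_1^1)^*$ annihilates $w$. Applying both expressions for $Q_{11}$ to $w$ gives
\[
-2B_0\beta=-\chi(8\beta^2+4\beta\operatorname{Im}\mu_1).
\]
Dividing by $-2\beta$ yields $B_0=2\chi(2\beta+\operatorname{Im}\mu_1)$. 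In the same way, $Q_{22}w$ gives $-2D_0\beta=-4\chi\beta^2$, hence $D_0=2\chi\beta$. Finally, $Q_{12}w$ gives $-2\I\beta\xi=2\I\chi\beta\mu_2$, hence $\xi=-\chi\mu_2$.

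\emph{Case $m=1$.} Here no such $w$ exists, and this is where the real input lies. We invoke Lemma~\ref{lem:abelian-m-one}, which gives $v_1^1=v_1^2=0$. All rank-one terms then vanish identically, and each of the three matrix identities reduces to a scalar identity. These are exactly the three equations above, again after dividing by $\beta\neq0$.

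The only point needing care is the $m=1$ case, but its difficulty is entirely absorbed by Lemma~\ref{lem:abelian-m-one}. The remaining steps are straightforward comparisons of coefficients of $I_m$.
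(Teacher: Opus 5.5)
Your proposal is correct and follows essentially the same route as the paper. For $m\geq2$ you pick a unit $w\perp v_1^1$, so the rank-one terms vanish because $v_1^2=2\zeta v_1^1$; for $m=1$ you invoke Lemma~\ref{lem:abelian-m-one}. In both cases you then compare the $I_m$-coefficients of the two expressions for $Q_{11},Q_{22},Q_{12}$ and divide by $\beta\neq0$.
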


\begin{proof}
Assume first that $m\geq2$.  Since
$v_1^1\in\mathcal W\simeq\C^m$, there exists a unit vector
$w\in\mathcal W$ such that $(v_1^1)^*w=0$.  Since
$v_1^2=2\zeta v_1^1$, one also has $(v_1^2)^*w=0.$
Consequently,
\[
v_1^1(v_1^1)^*w
=
v_1^2(v_1^2)^*w
=
v_1^2(v_1^1)^*w
=
0.
\]
Comparing the two expressions for $Q_{11}w$ in
\eqref{eq:abelian-Chern-Q-negative} and
\eqref{eq:abelian-Q11-negative}, we obtain
\[
-2B_0\beta w
=
-\chi\bigl(
8\beta^2+4\beta\operatorname{Im}\mu_1
\bigr)w.
\]
Since $\beta\neq0$ and $w\neq0$, this gives
$B_0=2\chi(2\beta+\operatorname{Im}\mu_1)$.

Similarly, comparing the two expressions for $Q_{22}w$ gives
$-2D_0\beta w=-4\chi\beta^2w$, and hence
$D_0=2\chi\beta$.  Comparing the two expressions for $Q_{12}w$
gives
$-2\I\beta\xi w=2\I\chi\beta\mu_2w$, so
$\xi=-\chi\mu_2$.

Suppose now that $m=1$.  Lemma~\ref{lem:abelian-m-one} gives
$v_1^1=v_1^2=0$.  Hence all rank-one terms in
\eqref{eq:abelian-Chern-Q-negative} and
\eqref{eq:abelian-Q11-negative}--\eqref{eq:abelian-Q12-negative}
vanish.  The corresponding scalar identities are
\[
-2B_0\beta
=
-\chi\bigl(
8\beta^2+4\beta\operatorname{Im}\mu_1
\bigr),
\qquad
-2D_0\beta=-4\chi\beta^2,
\qquad -2\I\beta\xi=2\I\chi\beta\mu_2.
\]
Since $\beta\neq0$, these identities give
\eqref{eq:abelian-scalar-system}.
\end{proof}

\begin{proposition}\label{prop:abelian-negative-excluded}
Let $(\mathfrak g,J,g)$ be a unimodular Hermitian Lie algebra of
complex dimension $n\geq3$ containing an abelian ideal
$\mathfrak a$ of real codimension two.  Assume that
$J\mathfrak a\neq\mathfrak a$ and that
$\mathfrak g/\mathfrak a$ is abelian.  Then the holomorphic sectional curvature of a canonical metric connection $D_s^r$ with $\chi>0$ cannot be a negative constant.
\end{proposition}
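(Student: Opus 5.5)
We argue by contradiction, exactly parallel to Proposition~\ref{prop:nonab-negative-excluded}. Assume $H^{D_s^r}=\ka<0$ with $\chi>0$. Lemma~\ref{lem:abelian-scalar-system} then supplies the three scalar identities \eqref{eq:abelian-scalar-system}. The whole proof consists of inserting into them the explicit values of $\beta$ (from \eqref{eq:abelian-negative-normal-form}), of $B_0,D_0,\xi$ (from \eqref{eq:abelian-negative-auxiliary}) and of $\mu_1,\mu_2$ (from \eqref{eq:abelian-mu-formulas}), together with the commutation relations \eqref{eq:abelian-relations}. Concretely,
\[
\beta=-\frac{c+d'}{2\sqrt2\,m\delta'},\qquad
\operatorname{Im}\mu_1=\frac{b-c}{\sqrt2\,\delta'},\qquad
2\beta+\operatorname{Im}\mu_1=\frac{(b-c)-(c+d')/m}{\sqrt2\,\delta'}.
\]
We split according to whether $b=0$ or $b\neq0$. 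In each case we also keep in mind that $\beta\neq0$, i.e.\ $c+d'\neq0$.

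\emph{Case $b=0$.} The relations \eqref{eq:abelian-relations} give $a=0$ and $c(d'-c)=0$. Since $B_0=0$ and $\chi>0$, the first identity in \eqref{eq:abelian-scalar-system} forces $2\beta+\operatorname{Im}\mu_1=0$, that is, $d'=-(m+1)c$.
\begin{itemize}
\item If $c\neq0$, then $d'=c$, and we get $(m+2)c=0$, which is impossible.
\item If $c=0$, then $d'=0$, so $\beta=0$, again a contradiction.
\end{itemize}

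\emph{Case $b\neq0$.} We reuse the parametrization \eqref{eq:bne-parameters} with $u_0=a/b$, $\rho=(a^2+bc)/b^2$, $\eta=\delta^2$ and $L=1-\eta$. In particular $c+d'=2b\rho$, so that $\beta=-\sqrt2\,b\rho/(m\delta')$ and $\rho\neq0$. Dividing by powers of $b$ and $\delta'$, the three identities become:
\begin{itemize}
\item from $D_0=2\chi\beta$: $\ \rho+u_0^2+\eta=-2\chi\rho/m$;
\item from the $B_0$ equation: $\ L=2\chi\bigl(1-\rho+u_0^2-2\rho/m\bigr)$;
\item from $\xi=-\chi\mu_2$: the real part gives $\delta L=\chi\delta(1+\rho+u_0^2)$, and the imaginary part gives $u_0L=\chi u_0(1-\rho+u_0^2)$, using $a+c'=bu_0(1-\rho+u_0^2)$ and $d'+b=b(1+\rho+u_0^2)$.
\end{itemize}
The first of these already forces $\rho<0$: if $\rho>0$ its left side is positive while its right side is negative, and $\rho\neq0$. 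We then split into four subcases according to whether $\delta=0$ and whether $u_0=0$.
\begin{itemize}
\item $\delta\neq0$, $u_0\neq0$: the two $\xi$-equations give $1+\rho=1-\rho$, so $\rho=0$, contradicting $\rho\neq0$.
\item $\delta\neq0$, $u_0=0$: combining $L=\chi(1+\rho)$ with the $B_0$ equation gives $\rho(3+4/m)=1$, so $\rho>0$, a contradiction.
\item $\delta=0$, $u_0\neq0$: here $L=1$, and comparing the imaginary $\xi$-equation with the $B_0$ equation yields $1-\rho+u_0^2=4\rho/m$. The left side is at least $1$ while the right side is negative, a contradiction.
\item $\delta=0$, $u_0=0$: the $D_0$ equation becomes $\rho(1+2\chi/m)=0$, so $\rho=0$, contradicting $\beta\neq0$.
\end{itemize}

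The only step I expect to require real care is the translation of $\xi=-\chi\mu_2$ into its real and imaginary parts. Unlike the generic non-abelian case, $\mu_2$ need not be a real multiple of $\xi$ here, so we cannot simply cancel $\xi$. Instead we keep the two real equations separately, with the factors $\delta$ and $u_0$ in front, and this is precisely what dictates the four-way subcase split. Once these scalar relations are checked against \eqref{eq:E1-ab}--\eqref{eq:E2-ab}, each subcase closes by elementary sign considerations.
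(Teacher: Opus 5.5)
Your proposal is correct and follows the paper's proof essentially step for step: the same $b=0$ versus $b\neq0$ split, the same three scalar equations in $(\rho,u_0,\eta,L)$, and the same four-way subcase analysis on whether $\delta$ and $u_0$ vanish. Deducing $\rho<0$ once from the $D_0$-equation just packages the sign arguments that the paper makes subcase by subcase; the one slip is harmless: $c+d'=2b\rho$ gives $\beta=-b\rho/(\sqrt2\,m\delta')$, not $-\sqrt2\,b\rho/(m\delta')$, but the equations you actually write down come from the correct value, so nothing downstream changes.
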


\begin{proof}
Assume that
$H^{D_s^r}=\ka<0$.  By
Lemma~\ref{lem:abelian-scalar-system}, the identities
\eqref{eq:abelian-scalar-system} hold.

\smallskip
\noindent
\emph{Case 1: $b=0$.}
The second relation in \eqref{eq:abelian-relations} gives $a=0$,
while the third gives $c(d'-c)=0$.  By
\eqref{eq:abelian-negative-normal-form},
\eqref{eq:abelian-negative-auxiliary}, and
\eqref{eq:abelian-mu-formulas},
\[
\beta=-\frac{c+d'}{2\sqrt2\,m\delta'},
\qquad
B_0=0,
\qquad
\operatorname{Im}\mu_1
=-\frac{c}{\sqrt2\,\delta'}.
\]
The first equation in \eqref{eq:abelian-scalar-system} therefore
gives
\[
-\frac{c+d'}m-c=0.
\]
If $c\neq0$, then $c(d'-c)=0$ implies $d'=c$, and the above
identity becomes
$-(2/m+1)c=0$, a contradiction.  If $c=0$, the same identity
gives $d'=0$, and hence
$\beta=0$, contradicting
Lemma~\ref{lem:abelian-negative-normal-form}.  Thus $b=0$ is
impossible.

\smallskip
\noindent
\emph{Case 2: $b\neq0$.}
Put
\[
u_0:=\frac ab,
\qquad
\rho:=\frac{a^2+bc}{b^2},
\qquad
\eta:=\delta^2,
\qquad
L:=1-\eta=(\delta')^2>0.
\]
Then
\begin{equation}\label{eq:bne-parameters1}
a=bu_0,
\qquad
c=b(\rho-u_0^2),
\qquad
c'=-bu_0(\rho-u_0^2),
\qquad
d'=b(\rho+u_0^2).
\end{equation}
Together with \eqref{eq:abelian-negative-normal-form},
\eqref{eq:abelian-negative-auxiliary}, and
\eqref{eq:abelian-mu-formulas}, we have
\[
\beta=-\frac{b\rho}{\sqrt2\,m\delta'},
\qquad
D_0=
\frac{b(\rho+u_0^2+\eta)}
{\sqrt2\,\delta'},
\qquad
\operatorname{Im}\mu_1
=
\frac{b(1+u_0^2-\rho)}
{\sqrt2\,\delta'}.
\]
Since $\beta\neq0$, one has $\rho\neq0$.

Substitution in the first two equations of
\eqref{eq:abelian-scalar-system} gives
\begin{align}
&\rho+u_0^2+\eta
=-\frac{2\chi}{m}\rho,
\label{eq:abelian-neg-1}\\
&L
=
2\chi\left[
1+u_0^2-
\left(1+\frac2m\right)\rho
\right].
\label{eq:abelian-neg-2}
\end{align}
Indeed, \eqref{eq:abelian-neg-1} is the equation
$D_0=2\chi\beta$, while
\eqref{eq:abelian-neg-2} follows from
$B_0=2\chi(2\beta+\operatorname{Im}\mu_1)$ after multiplying by
$\sqrt2\,\delta'/b$.

Using \eqref{eq:abelian-mu-formulas} and
\eqref{eq:bne-parameters1}, we also obtain
\[
\mu_2
=
-\frac b{\sqrt2\,L}
\left\{
\delta(\rho+u_0^2+1)
+\I u_0(u_0^2+1-\rho)
\right\}.
\]
By \eqref{eq:abelian-negative-auxiliary} and \eqref{eq:bne-parameters1}, we have $\xi=b(\delta+\I u_0)/\sqrt2$. Taking real and imaginary parts in the third equation $\xi=-\chi\mu_2$ of \eqref{eq:abelian-scalar-system}, we obtain
\begin{equation}\label{eq:abelian-neg-3}
L\delta
=
\chi\delta(\rho+u_0^2+1),
\qquad
Lu_0
=
\chi u_0(u_0^2+1-\rho).
\end{equation}

Equation \eqref{eq:abelian-neg-3} can be written as
\[
\delta\left[
L-\chi(\rho+u_0^2+1)
\right]=0,
\qquad
u_0\left[
L-\chi(u_0^2+1-\rho)
\right]=0.
\]
Accordingly, whether the factors $\delta$ and $u_0$ vanish
determines which of these identities can be divided by them.
We therefore distinguish the four possible cases.

If $\delta u_0\neq0$, division in the two identities in
\eqref{eq:abelian-neg-3} gives
\[
\frac L\chi
=
\rho+u_0^2+1
=
u_0^2+1-\rho,
\]
and hence $\rho=0$, contradicting $\beta\neq0$.

If $\delta=0$ and $u_0\neq0$, then $L=1$, and \eqref{eq:abelian-neg-1} gives $u_0^2=-(1+2\chi/m)\rho$, so $\rho<0$.  On the other hand,
comparing \eqref{eq:abelian-neg-2} with the second identity in
\eqref{eq:abelian-neg-3} gives
$\rho=m/(4\chi)>0$, a contradiction.

If $\delta\neq0$ and $u_0=0$, comparison of
\eqref{eq:abelian-neg-2} with the first identity in
\eqref{eq:abelian-neg-3} gives
\[
\rho=\frac1{3+4/m}>0.
\]
But then the left-hand side of \eqref{eq:abelian-neg-1} is
$\rho+\eta>0$, whereas its right-hand side is negative, again a
contradiction.

Finally, if $\delta=u_0=0$, then
\eqref{eq:abelian-neg-1} gives
$\rho=-2\chi\rho/m$.  Since $\rho\neq0$, this implies
$1=-2\chi/m$, contradicting $\chi>0$.

All possibilities lead to contradictions.  Therefore no canonical
metric connection with $\chi>0$ can have negative constant
holomorphic sectional curvature in the abelian quotient case.
\end{proof}

\vspace{0.3cm}

\section{Proof of the main theorem}

\begin{proof}[Proof of Theorem~\ref{thm:main}]

Let $\mathfrak g$ be a unimodular Hermitian Lie algebra which contains an abelian ideal $\mathfrak a$ of real codimension $2$, and $J$ a complex structure on $\mathfrak g$ satisfying $J\mathfrak a\neq\mathfrak a.$

Suppose that $\chi=0$.  Since
$\chi=t^2+s^2/4$, one has $t=s=0$.  The identity
$t=(1-r+rs)/2$ then gives $r=1$, and hence
\[
(r,s)=(1,0),
\qquad
D_s^r=D_0^1=\nabla^c.
\]

 Suppose that the Chern holomorphic sectional curvature $H^c$ is constant $\ka$. If $\mathfrak g/\mathfrak a$ is non-abelian, then, by \cite{CaoZhengFV}, one of the generic, half-generic, or degenerate cases occurs. Propositions~\ref{prop:generic} and \ref{prop:half} rule out the first two cases, whereas Proposition~\ref{prop:degenerate} gives $\ka=0$ and $R^c=0$ in the third.

If $\mathfrak g/\mathfrak a$ is abelian, Proposition~\ref{prop:abelian-b0} gives $\ka=0$ and $R^c=0$ when $b=0$, while Proposition~\ref{prop:abelian-bnonzero} rules out the case $b\neq0$.

Assume now that $\chi>0$.  Suppose that
$\mathfrak g/\mathfrak a$ is non-abelian.
Proposition~\ref{prop:nonab-sign-reduction} gives $\ka\leq0$,
Proposition~\ref{prop:nonab-negative-excluded}
excludes $\ka<0$, while Proposition~\ref{prop:nonab-zero-canonical}
excludes $\ka=0$. Hence
$\mathfrak g/\mathfrak a$ must be abelian.

If $\mathfrak g/\mathfrak a$ is abelian, then Proposition
\ref{prop:abelian-sign-reduction} gives $\ka\leq0$, and
Proposition~\ref{prop:abelian-negative-excluded} excludes
$\ka<0$.  Thus $\ka=0$.  Proposition
\ref{prop:abelian-zero-canonical} shows that $g$ is K\"ahler flat.
All canonical metric connections coincide with the flat
Levi--Civita connection, and therefore $R^{D_s^r}=0$.
This completes the proof.
\end{proof}

For every $\chi>0$, constant holomorphic sectional curvature
forces the metric to be K\"ahler and flat.  Thus the curve
$\Gamma=\{\chi=1\}$ plays no exceptional role in the class
considered here.  At the Chern endpoint $\chi=0$, one obtains
Chern flatness instead.

\noindent\textbf{Acknowledgments.}
We would like to thank Kexiang Cao, Haojie Chen, and Xiaolan Nie for their interest and helpful discussions.

\vspace{0.3cm}

\noindent\textbf{Generative AI disclosure.}
During the development of this work, the authors used ChatGPT plus to assist with exploratory computations, possible proof directions, and checking algebraic reductions. The authors are responsible for the conceptualization and writing of the article, and take full responsibility for the correctness of the contents.

\vspace{0.3cm}

\noindent\textbf{Declaration on competing interests.}
All authors declare that there are no competing interests for this article.

\end{document}